\newtheorem{thm}{Theorem}[section]
\newtheorem{cor}[thm]{Corollary}
\newtheorem{lem}[thm]{Lemma}
\newtheorem{prop}[thm]{Proposition}
\newtheorem{proposition}[thm]{Proposition}
\newtheorem{definition}[thm]{Definition}
\theoremstyle{definition}
\newtheorem{ex}[thm]{Example}
\newtheorem{exs}[thm]{Examples}
\newtheorem{rem}{Remark}
\newtheorem{rems}[rem]{Remarks}
\newcommand{\Cal}{\mathcal}
\newcommand{\cal} {\mathcal}
\newcommand{\fff}{\rightarrow}
\newcommand{\R}{{\mathbb{R}}}
\newcommand{\Q}{{\mathbb{Q}}}
\newcommand{\Z}{{\mathbb{Z}}}
\newcommand{\N}{{\mathbb{N}}}
\newcommand{\T}{{\mathbb{T}}}
\newcommand{\modu} {{\rm \, mod \ } 1}
\newcommand{\dd}{\operatorname{d}}
\def\a{{\underline a}} 
 \def\k{{\underline k}}
\newcommand{\beq}{\begin{equation}}
\newcommand{\eeq}{\end{equation}}
\def \Card {{\rm Card }}
\def \diam {\rm diam \,}
\def\e{{\rm e}}
\newcommand{\GG}{\mathcal G}
\def \hb {\hfill \break}
\def\eop{\qed}
\def\Proof {\vskip -2mm {\it Proof}.}
\def\0{{\underline 0}} 
\title[Cocyles over rotations] {Ergodicity of cocyles over 2-dimensional rotations}
\date {\today}
\author{Nicolas Chevallier and Jean-Pierre Conze} 
\address{Nicolas Chevallier, \hfill \break University of Haute Alsace, Mulhouse, France} \email{nicolas.chevallier@uha.fr}
\address{Jean-Pierre Conze, \hfill \break IRMAR, CNRS UMR 6625, \hfill \break University of Rennes, Campus de Beaulieu, 35042 Rennes Cedex, France} 
\email{jean-pierre.conze@univ-rennes.fr}
\subjclass[2010]{Primary: 28D05, 22D40, 37A25, 37A45} \keywords{rotation on $\T^2$, recurrent $\R^d$-cocycle, ergodic cocycle, 
badly approximable numbers, $\rm{Bad}_{\Z}(\alpha)$}
\begin{document}

\maketitle

\begin{abstract} 
We study recurrence and ergodicity of cocycles with values in $\R^d$, $d \geq 1$, over rotations by badly approximable irrational numbers 
on $\T^\rho$, $\rho > 1$. The discontinuities of the functions generating the cocycles also satisfy a Diophantine condition.
For simplicity of notation we mainly consider the cases $\rho = 2$, $d=1$ and 2. 
\end{abstract}

\tableofcontents

\section*{\bf Introduction}

Let $T:X\fff X$ be an ergodic measure preserving transformation on a probability space $(X, {\cal B}, \mu)$.
Let $\varphi$ be a measurable function on $X$ with values in $\R^d$, $d \geq 1$. 

The ergodic sums of $\varphi$ under the iteration of $T$, denoted by $\varphi_n$ or $S_n \varphi$, are defined as 
$$\varphi_n(x) := \sum_{j=0}^{n-1} \varphi(T^j x), n \geq 1, \ \varphi_0(x) = 0.$$ 
The sequence $(\varphi_n, n \geq 0)$ will be called a ``cocycle'' (over the dynamical system $(X,\mu, T)$) and denoted by $(T, \varphi)$, or $(\varphi_n)$,
or $(\varphi_{n, \alpha})$, when $T=T_\alpha$ is a rotation by $\alpha$ on a torus. 

We denote by $\tilde T_\varphi$ the skew-product map (also called ``cylinder map'') 
$$\tilde T_\varphi: (x, z) \to (Tx, z + \varphi(x)),$$ 
acting on $X \times \R^d$ endowed with the infinite invariant measure $\tilde \mu$ product of $\mu$ by the Lebesgue measure $\lambda$ (also denoted $dz$) on $\R^d$.

The cocycle $(T, \varphi)$ is said to be ergodic if the dynamical system $(X \times \R^d, \tilde \mu, \tilde T_\varphi)$ is ergodic.

In what follows, after general reminders, we take for $T$ a rotation $T_\alpha$ on the torus $X = \T^\rho= \R^\rho/\Z^\rho$, $\rho \geq 1$, 
with its Haar measure (denoted $\mu$ or $dx$):
$$T = T_\alpha: x=(x_1,  ..., x_\rho) \to (x_1 +\alpha_1, ..., x_\rho + \alpha_\rho),$$
where $\alpha = (\alpha_1, ..., \alpha_\rho) \in \T^\rho$  is totally irrational (i.e., $1, \alpha_1, ..., \alpha_\rho$ are linearly independent over $\Q$).

Given a cocycle $(T, \varphi)$, the main questions are: is it recurrent, is it ergodic?

A first remark is that, under a mild Diophantine condition on $\alpha$, too much regularity for $\varphi$ is an obstruction to ergodicity. 
Consequently, the presence of discontinuities plays a role in the construction of explicit ergodic cocycles.

Nevertheless, let us mention that three examples of ergodic cocycles over a 1-dimensional rotation are given in \cite{Kr74}: 
an analytic, a ``smooth'' and  a continuous cocycle. The latter example is constructed over an arbitrary irrational rotation.

For $\rho = 1$, in particular in the class of step functions, many examples of ergodic cocycles have been given from the late Seventies and later
(cf. for instance \cite{{Or83}, {Fr00}, Co09, {CoPi14}}). For a one-dimensional rotation, Koksma's inequality gives a uniform bound along the denominators of the rotation 
for ergodic sums of centered functions with bounded variation. It provides a way to prove the existence of non-trivial essential values and then ergodicity.  

For $\rho > 1$, there are fewer results due to the lack of such an inequality (cf. \cite{Yo80}). 
An alternative approach is based on Lebesgue density theorem, existence of recurrence times of the cocycle
and control of its discontinuities which introduces ``bad Diophantine approximation'' conditions.

The main aim here is to give examples of ergodic cocycles over rotations on $\T^\rho$, $\rho > 1$. 
The method of proof will be based on Lebesgue density theorem and recurrence times as mentioned above. 
For simplicity of notation we will present mainly examples on $\T^2$. In addition, we also review and extend some results on recurrence of cocycles over rotations. 

The main recurrence result of Section \ref{recSection}, Theorem  \ref{thm:recurrence}, applies to all rotations outside a  small exceptional set, 
while the ergodicity results of Section \ref{ergo}, Theorems \ref{thm:ergodicity1}, \ref{xymod} and \ref{ergoCpt}, require badly approximable rotations and discontinuities.

\section{\bf Preliminaries}\label{prelim}

\subsection{Recurrence and essential values} \label{recEss}

\

For the sake of completeness, we start with reminders summarizing some results, in particular on recurrence.
A basic reference on cocycles is K. Schmidt's seminal work \cite{Sch77}. 

Let $(\varphi_n)$ be a cocycle generated by a measurable function $\varphi$ 
with values in $\R^d$ over an ergodic dynamical system $(X, {\cal B}, \mu, T)$.
\footnote{In this general subsection, in view of Lemma \ref{compact}, $(X, {\cal B}, \mu)$ is a Radon measure space. The values of $\varphi$ 
could be in a locally compact group $G$, but we restrict ourselves to $G = \R^d$ provided with a norm $| \  |$.} 

Recall that $(\varphi_n)$ is recurrent if, for every neighborhood $V$ of the origin in $\R^d$, for $\mu$-a.e. $x \in X$, 
there is a strictly increasing sequence $(n_k(x))$ in $\N$ such that $\varphi_{n_k(x)} (x) \in V$.
It is recurrent if and only if $\tilde{T}_{\varphi}$ is conservative.
It is transient if $\lim_n |\varphi_n(x)| = +\infty$ for $\mu$-a.e. $x$.
A cocycle $(\varphi_n)$ over an ergodic dynamical system is either recurrent or transient

Recall also that $a \in \R^d \cup \{\infty\}$ is called {\it an essential value} of $(\varphi_n)$ if, for every neighborhood $V$ of $a$, 
for every measurable subset $B$ of positive measure in $X$, there is $n \in \N$ such that
\begin{eqnarray}
\mu(B\cap T^{-n} B \cap \{x: \varphi_n(x) \in V\}\bigr) > 0. \label{essval0}
\end{eqnarray}
We denote by ${\overline {\cal E}}(\varphi)$ the set of essential values and by ${\cal E}(\varphi)$ the set of finite essential values.
Observe that ${\cal E}(\varphi)$ contains always $0$ and that ${\overline {\cal E}}(\varphi) = \{0, \infty\}$ if the cocycle is transient.

Suppose that $(\varphi_n)$ is recurrent.
The group ${\mathcal P}(\varphi)$ of periods of the measurable $\tilde T_\varphi$-invariant functions
is a closed subgroup of $\R^d$ which coincides with ${\mathcal E}(\varphi)$. (See \cite{Sch77} or \cite{Aa97}).

{\it Therefore, proving ergodicity of a cocycle $(\varphi_n)$ with values in $\R^d$ amounts to showing that $\mathcal E(\varphi)$ contains elements generating 
a dense subgroup of $\R^d$.}

\vskip 2mm 
\goodbreak
{\it Induced map and induced cocycle}

Let $B \subset X$ be a set of positive measure. On $B$ equipped with the measure $\mu_B = \mu(B)^{-1} \mu_{|B}$, 
the induced transformation is $T_B(x) = T^{R(x)}(x)$, with $R(x) = R_B(x) := \inf\{j \ge 1: T^j x \in B\}$. 
It is well defined for a.e. $x \in B$ if the system is conservative, in particular (by Poincar\' e recurrence property) if it has a finite measure. 
Clearly if $T$ is conservative, then $T_B$ is conservative for every $B$ of positive measure.
The successive return times of a point $x$ in $B$ are $ R_1(x) =R(x), R_2(x) = R(x) + R(T^{R(x)}x), ..., R_n(x) = R(x) + R_{n-1}(T^{R(x)}x), ...$.

Let $\varphi$ be a measurable function on $X$. The "induced" cocycle (for the induced map $T_B$ on $B$) is, for $n \geq 1$, 
\begin{eqnarray*}
&&\varphi_n^B(x) := \varphi^B(x) + \varphi^B(T_B x)\,\cdots \, + \varphi^B(T_B^{n-1}x) =\varphi_{ R_n(x) }(x), x \in B,\\
&&\text{ with } \varphi^B(x) := \sum_{j= 0}^{R(x)- 1} \varphi(T^jx) = \varphi_{R(x)}(x).
\end{eqnarray*} 
We see that $a \in \R^d \cup \{\infty\}$ is an essential value if, for every neighborhood $V$ of $a$, 
and every measurable subset $B$ of positive measure in $X$, there is $n \in \N$ such that $\mu(\{x \in B: \varphi_n^B(x) \in V\}) > 0$.

\begin{rems} \label{rem1} a) A cocycle $(\varphi_n)$ is recurrent, if and only if, for each neighborhood $V$ of the origin 
and each $B \subset X$ of positive measure, there exists $n \geq 1$ such that 
\begin{eqnarray}\label{recu}
\mu(B\cap T^{-n}B\cap (\varphi_n\in V))>0. \label{rec2def}
\end{eqnarray}

b) ${\overline {\cal E}}(\varphi) =\{ 0 \}$ if and only if $\varphi$ is a coboundary (cf. \cite{Sch77}), 
meaning that there exists a measurable function $\psi: X \to \R^d$ such that $\varphi  = \psi - \psi \circ T$.

c) Two cocycles which differ by a coboundary have the same set of essential values.

d) A transient cocycle is never ergodic when $(X, \mu, T)$ is aperiodic (i.e. such that the set of periodic points is $\mu$-negligible).
Indeed, let $a \in \R^d$ be different from the origin and $V$ a neighborhood of $a$. 
By transience there is a set $A$ of positive measure such that, for some $N \geq 1$, $\varphi_n(x) \not \in V$ for $n \geq N$ and all $x \in A$. 
By Rohklin's lemma for aperiodic dynamical systems, there is a set $B \subset A$  such that the return time in $B$ is $> N$. 
This shows that (\ref{essval0}) is not satisfied and $a \not \in {\mathcal E}(\varphi)$.
Hence $(\varphi_n)$ is not ergodic. 
\end{rems}

The following lemma will be useful in the proof of ergodicity. 
\begin{lem}\label{compact} \cite[Proposition 3.8]{Sch77} If $K\subset \R^d$ is a compact set such that $K\cap \cal E(\varphi)=\emptyset$, 
there exists a set $B$ of positive measure such that $\mu(B\cap T^{-n}B\cap(\varphi_n \in K))=0, \forall n\in\Z$.
\end{lem}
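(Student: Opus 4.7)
The plan is to combine, for each $a\in K$, the negation of the essential-value condition (\ref{essval0}) with compactness of $K$, and then to merge the resulting pointwise witnesses into a single set $B$ by an iterative shrinking procedure.

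First, for each $a\in K$, the assumption $a\notin\mathcal E(\varphi)$ is precisely the negation of (\ref{essval0}) and furnishes an open neighborhood $V_a$ of $a$ together with a set $B_a\subset X$ of positive measure satisfying
\[
\mu\bigl(B_a\cap T^{-n}B_a\cap\{\varphi_n\in V_a\}\bigr)=0\quad\text{for every }n\ge 1.
\]

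The main obstacle is that these pointwise witnesses cannot be combined by intersection: a finite intersection $\bigcap_i B_{a_i}$ could easily have zero measure. I would overcome this by upgrading the pointwise data to a \emph{hereditary} form: for any set $B'$ of positive measure and any $a\notin\mathcal E(\varphi)$, one can find a subset $B\subset B'$ of positive measure and a neighborhood $V_a$ of $a$ such that the same vanishing still holds on $B$. The quickest proof is to induce on $B'$. A classical fact (see \cite[Ch.~3]{Sch77}) says that the induced cocycle $(\varphi_n^{B'})$ over $(B',\mu_{B'},T_{B'})$ has the same set of essential values as $(\varphi_n)$, so $a\notin\mathcal E(\varphi^{B'})$. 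Applying the bare definition inside $B'$ furnishes $B\subset B'$ of positive measure and a neighborhood $V_a$ with $\mu_{B'}\bigl(B\cap T_{B'}^{-k}B\cap\{\varphi_k^{B'}\in V_a\}\bigr)=0$ for all $k\ge 1$, and pulling this back via the identities $T_{B'}^k x=T^{R_k(x)}x$ and $\varphi_k^{B'}(x)=\varphi_{R_k(x)}(x)$ (noting that every time $m$ at which $x\in B$ returns to $B\subset B'$ under $T$ is one of the $R_k(x)$) yields $\mu(B\cap T^{-m}B\cap\{\varphi_m\in V_a\})=0$ for each $m\ge 1$.

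With the hereditary form available, cover $K$ by finitely many neighborhoods $V_{a_1},\dots,V_{a_N}$ of the above type, and build $B$ inductively: set $B_0:=X$ and, given $B_{i-1}$ of positive measure, apply the hereditary step to extract $B_i\subset B_{i-1}$ of positive measure satisfying the vanishing condition for $V_{a_i}$. Since this condition is monotone under passing to subsets, $B:=B_N$ satisfies it simultaneously for every $V_{a_i}$, and summing over $i$ together with $K\subset\bigcup_i V_{a_i}$ gives $\mu(B\cap T^{-n}B\cap\{\varphi_n\in K\})=0$ for every $n\ge 1$. The case $n\in\Z$ with $n<0$ follows from the cocycle identity $\varphi_{-n}(x)=-\varphi_n(T^{-n}x)$ and the $T$-invariance of $\mu$, applied to $K\cup(-K)$, which is still compact and disjoint from $\mathcal E(\varphi)$ since the latter is a closed subgroup of $\R^d$.
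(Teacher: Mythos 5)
Your overall architecture --- pointwise witnesses from the negation of (\ref{essval0}), a hereditary step relocating a witness inside any prescribed set of positive measure, a finite subcover, and an iterative shrinking --- is the same as the paper's. But there is a genuine gap at the junction between the compactness step and the induction, caused by a loss of control over the neighborhoods. The finite subcover $K\subset\bigcup_{i}V_{a_i}$ must be extracted from neighborhoods fixed \emph{before} the induction starts; yet your hereditary step, as stated and proved, returns \emph{both} a set $B\subset B'$ \emph{and} a neighborhood of $a$, and that neighborhood comes out of the negation of the essential-value condition for the induced cocycle $\varphi^{B'}$, so it depends on $B'$. When you later ``apply the hereditary step to extract $B_i\subset B_{i-1}$ satisfying the vanishing condition for $V_{a_i}$'', you are asserting vanishing for the neighborhood $V_{a_i}$ chosen at the covering stage, whereas the step only delivers vanishing for some possibly much smaller neighborhood $W_i\ni a_i$ depending on $B_{i-1}$. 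At the end you know $\mu(B\cap T^{-n}B\cap(\varphi_n\in W_i))=0$, and nothing forces $\bigcup_i W_i\supset K$; since the vanishing condition is stable under shrinking the neighborhood but not under enlarging it, the final summation over $i$ does not go through.

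What is needed --- and what the paper supplies --- is a \emph{uniform} hereditary step: a neighborhood depending only on $a$ such that for \emph{every} $B'$ of positive measure some $B\subset B'$ works for that same neighborhood. The paper obtains this by fixing, for each $z\in K$, the pointwise witness $(B_z,U_z)$, choosing $V_z$ with $V_z+V_z\subset U_z$ \emph{once}, covering $K$ by the sets $V_{z_i}+z_i$, and then transferring the avoidance from $B_{z_i}$ into the current set by hand: ergodicity produces $k$ and $D'\subset D$ with $T^kD'\subset B_{z_i}$, Lusin's theorem makes $\varphi_k$ vary by at most $V_{z_i}$ on a further subset, and the identity $\varphi_n(x)=\varphi_k(x)+\varphi_n(T^kx)-\varphi_k(T^nx)$ converts avoidance of $U_{z_i}+z_i$ on $B_{z_i}$ into avoidance of $V_{z_i}+z_i$ on that subset. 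The neighborhood is thus degraded only once, by the $V+V\subset U$ margin, not once per stage. Note also that this transfer is exactly the content of the nontrivial inclusion $\mathcal E(\varphi^{B'})\subset\mathcal E(\varphi)$ that you invoke as a black box (the reverse inclusion is the easy one); since the present lemma is itself a result of \cite{Sch77} being reproved for completeness, outsourcing that direction to the same source hides the entire technical content of the proof and, in any case, does not by itself yield the uniformity your induction requires.
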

\proof For the sake of completeness, we give a proof. It uses induced cocyles.

The hypothesis implies the existence, for every $z \in K$, of a subset $B_z$ of positive measure in $X$ and of a neighborhood $U_z$ of the origin such that 
$\varphi_n^{B _z}(x) \not \in U_z+z, \forall n \geq 0, \forall x \in B_z$.

Let $V_{z}$ be a neighborhood of the origin such that $V_{z} +V_{z} \subset U_{z}$.
By compactness of $K$, there is a finite number of points $z_1, ..., z_r$ such that $K \subset \cup_{i=1}^r (V_{z_i} + z_i)$. 

We proceed by induction on $r$, denoting simply $B_i$, $U_i$, $V_i$ subsets and neighborhoods. 

Suppose we have constructed a subset $D=D_{r-1}$ of positive measure such that the values of the cocycle $(\varphi_{D}^n)$ never belong to $V_i + z_i$, for $i = 1, ..., r-1$.
We are going to construct a subset $D_r$ of positive measure of $D$ such that the values of $(\varphi_{D_r}^n)$ never belong to $V_r + z_r$.

Since $D_r$ is a subset of $D$, the values of the induced cocycle $(\varphi_{D_r}^n)$ are contained in those of 
$(\varphi_{D}^n)$, so they still never belong to $V_i + z_i$, for $i = 1, ..., r-1$.

The set $D_r$ is the set $B$ of the statement, since, for $x \in D_r$, we have $\varphi_{D_r}^n(x) \not \in \cup_{i=1}^r (V_i+z_i)$, 
hence $\varphi_{D_r}^n(x) \not \in K$ and we will be done.

It remains to construct $D_r$. By ergodicity of $T$, there is $k \geq 1$ and $D' \subset D$ of positive measure such that $T^k D' \subset B_r$. 
By Lusin's theorem, there is a subset $D_r$ of $D'$ of positive measure such that $\varphi_k(x) - \varphi_k(y) \in V_r, \forall x, y \in D_r$.

For $k, n \geq 0$, we have $\varphi_n(x) = \varphi_k(x) + \varphi_{n}(T^k x) - \varphi_k(T^n x)$.
Therefore, if $x, T^n x \in D_r$, then $\varphi_k(T^n x) - \varphi_k(x) \in V_r$ and therefore $ \varphi_{n}(T^k x) \in V_r+\varphi_n(x)$.

For $x \in D_r$ and $n$ such that $T^n x \in D_r$, 
as $T^k x , T^{k+n} x \in B_r$, $\varphi_{n}(T^k x)$ is a value of the induced cocycle $\varphi_{B_r}^n$ and therefore $\not \in U_r + z_r$.
It follows that $\varphi_n(x) \not \in V_r + z_r$, because, otherwise, $\varphi_{n}(T^k x) \in V_r + V_r + z_r \subset U_r+z_r$, a contradiction.

We conclude that  $\varphi_{D_r}^n$ never takes its values in $V_r + z_r$ and $D_r$ has the desired property.
\eop

\vskip 3mm
{\bf Regularity of a cocycle}

Let $(\varphi_n)$ be a recurrent cocyle. 
The function $x \to \varphi(x) \, \rm{mod} \, \cal E(\varphi)$ on $X$ defines a cocycle with values in $\R^d / \cal E(\varphi)$  
whose finite essential values, as a quotient of $\varphi$, belong to the class of $\cal E(\varphi)$, hence are trivial. 
By Remark 1.a) either $\varphi \, \rm{mod} \, \cal E(\varphi)$ is a coboundary, 
or $\R^d/\cal E(\varphi)$ is not compact and the set of essential values of  $\varphi \, \rm{mod} \, \cal E(\varphi)$ is $\{0, \infty\}$.

In the first case, the cocycle defined by $\varphi$ with values in $\R^d$ is said to be {\it regular}. There exists then 
a measurable map $\eta : X \rightarrow \R^d$ such that the cocycle $\psi:= \varphi + \eta - T\eta$ takes a.e. its values in $\cal E(\varphi)$.
Moreover, since the group of periods of the $\tilde T_\psi$-invariant functions is still $\cal E(\varphi)$, it follows that 
$\tilde T_\psi: (x, z) \rightarrow (T x, z + \psi(x))$ is ergodic for its action on $X \times\cal E(\varphi)$.
Therefore regularity for a cocycle $\varphi$ means that, if it is not ergodic, it can be reduced up to a coboundary to an ergodic cocycle with values in a closed subgroup.

\subsection{Diophantine conditions, $\rm{Bad}_{\Z}(\alpha)$} \label{prelimDioph}

\

{\bf Notation.} The Hausdorff dimension of a set $E \subset \R^d$ is denoted by $\dim_H E$.

For $u \in \R$, let $\{u\} = u - k$ if $u \in [k, k+1[, k \in \Z$, denote its fractional part 
and let $\|u\|:= \inf (\{u\}, 1 - \{u\}) = \inf_{n \in \Z} |u - n|$ denote its distance to $\Z$.
The set $\Z^2\setminus\{(0,0)\}$ is denoted by $\Z^2_*$.

For $h=(h_1, h_2)$ and $x =(x_1, x_2)$ in $\R^2$, we denote by $\langle h, x \rangle$ or $h.x$ the scalar product $h_1 x_1 + h_2 x_2$.
 
If $\alpha=\frac1{a_1+\frac1{a_2+\frac1{a_3+\ldots}}}$, with partial quotients $(a_n)_{n\geq1}$, is an irrational number,
its denominators are $q_n$: $q_0=1$, $q_1=a_1$ and $q_{n+1}=a_{n+1}q_n+q_{n-1}$ for $n\geq1$.

We recall now some facts about Diophantine properties of irrational numbers.

For $s \geq 0$, $D(s)$ denotes the set of irrational numbers $\alpha$ such that, for a finite constant $A = A(\alpha, s)$, the partial quotients of $\alpha$ satisfy
\begin{eqnarray}
a_n \leq A \, n^s, \forall n \geq 1. \label{majan}
\end{eqnarray}
By a theorem of Borel-Bernstein, a.e. $\alpha$ is in $D(s)$ for every $s> 1$.
Moreover, thanks to the inequality $q_{n+1}\|q_n\alpha\|\geq \tfrac12$, we see that for  all $n\geq 0$ and all $q_n\leq k<q_{n+1}$, we have 
$\|k\alpha\|\geq \|q_n\alpha\|\geq \tfrac1{2q_{n+1}}\geq \tfrac 1{4a_{n+1}k}.$ 
As the sequence $(q_n)$ grows at least exponentially, it follows that,  if $\alpha$ satisfies (\ref{majan}), there is a constant $c >0$ such that 
\begin{eqnarray}
\|k \alpha\| \geq {c \over k \, (\log k)^s}, \, \forall k > 1. \label{kalph1}
\end{eqnarray}

Recall also that the type (or Diophantine exponent) of an irrational number $\alpha$ is the real $\eta \ge 1$ such that
\begin{eqnarray}
\inf_k \,[k^{\eta - \varepsilon}\|k \alpha\|] =0, \ \ \inf_k [k^{\eta + \varepsilon} \|k \alpha\|] > 0, \ \forall \varepsilon >0. \label{type1}
\end{eqnarray}
The type of $\alpha$ satisfying (\ref{kalph1}) for some $s> 1$ is 1 and therefore by what precedes the type of a.e. $\alpha$ is 1.

More directly, it can be observed that, if $\alpha$ is not of type 1, there is an integer $r \geq 1$ such that $ k^{1+1/r}\|k\alpha\|\leq 1$ for infinitely many $k$.
For each $n$, the set of $\alpha$'s satisfying the latter property is negligible by the Borel-Cantelli lemma.

\vskip 3mm 
{\bf Badly approximable numbers}

Recall that a number $\theta$ is {\it badly approximable} $(\theta \in \text{Bad})$, if
\begin{eqnarray}
\exists c > 0: \|q \theta\| \geq \frac c {|q|}, \, \forall q \in \Z \setminus \{0\}. \label{bpq}
\end{eqnarray}
This is equivalent for $\theta$ to have bounded partial quotients (bpq). Clearly the type of numbers in Bad is 1. 
The set Bad has Lebesgue measure 0 and Hausdorff dimension 1, see \cite{Ja}.

The set $\rm{Bad}_{\Z}(\theta)$ of {\it badly approximable numbers with respect to an irrational $\theta$} is
\[\operatorname{Bad_{\Z}(\theta)}= \{x\in[0,1]: \exists c(x) > 0: \|q\theta-x\|\geq {c(x) \over |q|}, \forall q \in \Z \setminus \{0\}\}.\]
Observe that $\text{Bad}_{\Z}(\theta)=-\text{Bad}_{\Z}(\theta)$ and that  $0 \in \operatorname{Bad_{\Z}(\theta)}$ is equivalent to $\theta \in \text{Bad}$.

The set $\text{Bad}_{\Z}(\theta)$ has measure 0, but its Hausdorff dimension $\dim_H \text{Bad}_{\Z}(\theta)$ is 1. Actually
(cf. Proposition \ref{prop:badApprox1} in appendix), the set of $n$-tuples $(\beta_1, ..., \beta_n)$ which are in $\text{Bad}_{\Z}(\theta)$
as well as the differences $\beta_j-\beta_i$ for all $i,j, i\not = j$, is big in the sense that its Hausdorff dimension is $n$.

{\bf Role of Diophantine conditions in the question of ergodicity}

Let $(\varphi_n)$ be a recurrent cocycle with values in $\R^d$ over a rotation $x \to x + \alpha$ on $\T^\rho$.
It is easily seen that first possible obstruction to ergodicity is when some component $\varphi^i$ of the function $\varphi$ generating the cocycle is a coboundary
(meaning that there is a measurable function $\psi : \T^\rho \to \R$ such that  $\varphi^i= \psi - T_\alpha \psi$ a.e.).

The reduction of a component to a coboundary is related to the decay of its Fourier coefficients. 
For an example, when $\rho = 1$, we can use (see \cite[Lemma 2.2]{Co09} for a proof):
\begin{lem} \label{serdnm} If $\alpha$ is an irrational of type $\eta$, 
then $\displaystyle{ \sum_{k \ge 1} {1 \over k^{\eta + \delta}} {1\over \|k\alpha\|} < \infty}$ for every $\delta > 0$.
\end{lem}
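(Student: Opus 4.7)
The plan is to split the series according to the continued fraction denominators $(q_n)$ of $\alpha$ and to bound each block using a three-distance estimate together with the type hypothesis. The type $\eta$ condition will enter in two forms: the pointwise lower bound $\|k\alpha\| \geq c_\varepsilon k^{-\eta-\varepsilon}$ valid for every $k \geq 1$ and every $\varepsilon > 0$, and, by applying it to $k=q_n$ and combining with the standard inequality $q_{n+1} \|q_n\alpha\| \leq 1$, the growth bound $q_{n+1} \leq c_\varepsilon^{-1} q_n^{\eta+\varepsilon}$.

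First I would write, using $k \geq q_n$ inside the $n$-th block,
\[
\sum_{k \geq 1} \frac{1}{k^{\eta+\delta}\, \|k\alpha\|}
= \sum_{n \geq 0} \sum_{q_n \leq k < q_{n+1}} \frac{1}{k^{\eta+\delta}\, \|k\alpha\|}
\leq \sum_{n \geq 0} \frac{1}{q_n^{\eta+\delta}} \sum_{k=1}^{q_{n+1}-1} \frac{1}{\|k\alpha\|}.
\]

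The main technical step is the block estimate
$\sum_{k=1}^{q_{n+1}-1} \frac{1}{\|k\alpha\|} \leq C\, q_{n+1} \log q_{n+1}$.
By the three-distance theorem applied to the $q_{n+1}$ points $j\alpha \bmod 1$, $0 \leq j \leq q_{n+1}-1$, all gaps on the circle are at least $\|q_n \alpha\| \geq 1/(2q_{n+1})$, hence the level set count $\#\{1 \leq k \leq q_{n+1}-1 : \|k\alpha\| \leq t\}$ is at most $4 q_{n+1} t + 2$. Writing the sum as $\int_0^\infty \#\{k : 1/\|k\alpha\| > u\}\, du$, cutting the integration at the value of $u$ where the linear bound in $1/u$ meets the trivial bound $q_{n+1}-1$, and integrating produces the logarithmic factor $\log q_{n+1}$.

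Combining the previous two steps and inserting the growth bound $q_{n+1} \leq c_\varepsilon^{-1} q_n^{\eta+\varepsilon}$, the $n$-th block contributes at most $C'\, (\log q_n)/q_n^{\delta-\varepsilon}$. Choosing $0 < \varepsilon < \delta$ (say $\varepsilon = \delta/2$) and using that $(q_n)$ grows at least geometrically (with ratio $\geq (1+\sqrt 5)/2$) makes the resulting series over $n$ convergent, and also absorbs the finitely many initial terms where the block argument is degenerate. The only real obstacle is the three-distance block bound; once it is in hand, the remainder is bookkeeping on the denominators.
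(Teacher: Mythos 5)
Your proof is correct: the block decomposition over $[q_n,q_{n+1})$, the separation bound $\|(k-k')\alpha\|\geq\|q_n\alpha\|\geq \tfrac1{2q_{n+1}}$ giving $\sum_{k<q_{n+1}}\|k\alpha\|^{-1}\leq Cq_{n+1}\log q_{n+1}$, the growth bound $q_{n+1}\leq c_\varepsilon^{-1}q_n^{\eta+\varepsilon}$ from the type hypothesis, and the choice $\varepsilon=\delta/2$ combined with the geometric growth of $(q_n)$ all fit together without gaps. The paper itself does not prove this lemma but defers to \cite[Lemma~2.2]{Co09}; your argument is the standard one behind such statements and is consistent with the machinery the paper develops around (\ref{kalph1}), so there is nothing to add.
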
 
\begin{proposition} If $\alpha$ is of type $\eta$ and if $\varphi(x) = \sum_{n \not= 0} c_n(\varphi) e^{2\pi i n x}$ is such that 
$c_n(\varphi) = O(n^{-(\eta + \delta)})$ for some $\delta > 0$, then $\varphi$ is a coboundary: $\varphi = T_\alpha \psi - \psi$, with $\psi$ continuous.
\end{proposition}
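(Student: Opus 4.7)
The plan is to solve the cohomological equation $\varphi = T_\alpha \psi - \psi$ formally in Fourier coefficients and then verify that the resulting series converges absolutely, so that $\psi$ is continuous.

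First, if $\psi(x) = \sum_{n \neq 0} d_n \, e^{2\pi i n x}$, then for $T_\alpha x = x + \alpha$ one has
\[
(T_\alpha \psi - \psi)(x) = \sum_{n \neq 0} d_n \bigl(e^{2\pi i n \alpha} - 1\bigr) e^{2\pi i n x}.
\]
Matching this with the Fourier expansion of $\varphi$ (which has no constant term by assumption) forces the choice
\[
d_n := \frac{c_n(\varphi)}{e^{2\pi i n \alpha} - 1}, \qquad n \neq 0.
\]
Since $\alpha$ is irrational, $e^{2\pi i n \alpha} \neq 1$ for all $n \neq 0$, so each $d_n$ is well defined.

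Next I would use the elementary estimate $|e^{2\pi i n \alpha} - 1| \geq 4 \|n\alpha\|$ (valid since $|e^{2\pi i t} - 1| \geq 4\|t\|$ for $t \in \R$), which gives
\[
|d_n| \leq \frac{|c_n(\varphi)|}{4\, \|n\alpha\|} \leq \frac{C}{|n|^{\eta + \delta}\, \|n\alpha\|}
\]
by the hypothesis $c_n(\varphi) = O(|n|^{-(\eta + \delta)})$. By \lemref{serdnm}, the series $\sum_{k \geq 1} k^{-(\eta + \delta)} / \|k\alpha\|$ converges, and since $\|(-n)\alpha\| = \|n\alpha\|$, the same holds when summing over $n \in \Z \setminus \{0\}$. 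Therefore $\sum_{n \neq 0} |d_n| < \infty$.

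Consequently the series defining $\psi$ converges uniformly on $\T$, so $\psi$ is continuous, and by construction the Fourier coefficients of $T_\alpha \psi - \psi$ agree termwise with those of $\varphi$, giving $\varphi = T_\alpha \psi - \psi$ as equality of continuous functions. There is no real obstacle beyond checking the summability, which is exactly what the stated lemma is designed to provide; the only minor point to be careful about is the denominator $e^{2\pi i n \alpha} - 1$, which is nonzero but small, and is precisely controlled by $\|n\alpha\|$.
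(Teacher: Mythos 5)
Your proof is correct and follows essentially the same route as the paper: solve the cohomological equation formally via Fourier coefficients, bound the denominator $|e^{2\pi i n\alpha}-1|$ from below by a multiple of $\|n\alpha\|$, and invoke Lemma~\ref{serdnm} to get absolute summability of the coefficients of $\psi$, hence continuity. The only difference is that you make the elementary estimate $|e^{2\pi i t}-1|\geq 4\|t\|$ explicit, which the paper leaves implicit.
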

\proof \ The Fourier coefficients of an integrable solution $\psi$ of the coboundary equation $\varphi = T_\alpha \psi - \psi$, are given by 
$c_n(\psi) = {c_n(\varphi) \over e^{2\pi i n \alpha} - 1}$.
By Lemma \ref{serdnm} we have
$$\sum_{n \not = 0} |c_n(\psi)|\leq \sum_{n \not= 0} { |c_n(\varphi) \over \|n\alpha\|}| \leq C \sum_{n \not= 0} {1 \over n^{\eta + \delta}} {1 \over \|n \alpha\|} < +\infty.$$
Therefore the coboundary equation has a solution which is continuous. \eop

For example, $\varphi: x \to x (1-x) - \frac 1 6$ coincides on $[0, 1]$ with the continuous, 1-periodic and 1-lipschitz function on $\R$ given by the Fourier expansion
${-1 \over\pi^2} \, \sum_{n \geq 1} {\cos (2 \pi n x) \over n^2}$.

If $\alpha$ is of type $< 2$, then this function $\varphi$ is a coboundary for the rotation by $\alpha$ and the cocycle  $(\varphi_{n, \alpha})$ in not ergodic.

{\it A non-regular BV cocycle} 
 
As an illustration of the role of Diophantine properties, let us also mention an example of a non regular (hence non ergodic) cocycle (cf. \cite{Co09}).

If $\alpha$ is an irrational $\not \in$ Bad, it can be shown that there are $\beta, r$ in $]0, 1[$ such that
$$\varphi: x \to \varphi(x) = 1_{[0,\beta]}(x \text{ mod } 1) - 1_{[0,\beta]}(x + r \text{ mod } 1)$$ 
satisfies ${\overline {\mathcal E}}(\varphi) = \{0, \infty \}$. This implies that the cocycle $(\varphi_{n, \alpha})$ is not regular 
and that $\tilde T_{\alpha, \varphi}$ is not ergodic on $\T^1 \times \Z$ endowed with the product of the Lebesgue measure on $\T^1$ by the counting measure on $\Z$.  

\section{\bf About recurrence, examples} \label{recSection}

{\bf A sufficient condition for recurrence} 

The question of recurrence for a cocycle with values in $\R^d$, $d \geq 1$ is natural and plays a key role in the proof of ergodicity. 
Let us first consider the general case of a cocycle $(\varphi_n)$ generated by a function $\varphi : X \to \R^d$, $d \geq 1$, over an ergodic dynamical systems $(X, \mu, T)$.

If $d=1$ and $\varphi$ is integrable, a necessary and sufficient condition for the recurrence of $(\varphi_n)$ is $\int_{X} \varphi \, d\mu = 0$. (cf. \cite{At76})

For $d \geq 2$, the question of recurrence is more difficult, but sometimes recurrence can be deduced from the growth rate  of the cocycle $(\varphi_n)$.
The following general lemma gives a simple sufficient condition for recurrence (cf. \cite{ChCo09}). 

For $\varphi : X \to \R^d$ in $L^2(\mu)$, denote by $\|\varphi_{n}\|_2 := (\int_X |\varphi_{n}(x)|^2 \, d\mu(x))^\frac12$ its $L^2$-norm.
\begin{lem}\label{recgen} Let $(\varphi_n)$ be a cocycle over a dynamical system $(X,{\cal B},\mu, T)$ with values in
$\R^d$. If there exist a strictly increasing  sequence of integers $(k_n)$ and a sequence of real numbers $(\delta_n > 0)$ such that:
$\lim_{n \to \infty} \mu(x: |\varphi_{k_n}(x)| \ge \delta_n) = 0\ {\rm and \ } \delta_n = o(n^{1/d})$,
then the cocycle is recurrent.

In particular, if $\varphi \in L^2(\mu)$ and $\|\varphi_{n}\|_2 = o(n^{{1\over d}})$, the cocycle is recurrent.
\end{lem}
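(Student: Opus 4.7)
The plan is to argue by contradiction, converting non-recurrence into a wandering box in the skew product $X\times\R^d$ whose disjoint translates cannot all fit into a fixed box of side of order $\delta_N$.

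Since all norms on $\R^d$ are equivalent and both the hypothesis and the conclusion are stable under rescaling $|\cdot|$ by a positive constant, I would assume that $|\cdot|$ is the sup norm. Suppose $(\varphi_n)$ is not recurrent. By Remarks~\ref{rem1}(a), there exist $B\subset X$ with $\mu(B)>0$ and $\epsilon>0$ such that $\mu(B\cap T^{-n}B\cap\{|\varphi_n|\le\epsilon\})=0$ for every $n\ge 1$. Set $Q=[-\epsilon/2,\epsilon/2]^d$ and $C=B\times Q\subset X\times\R^d$. If $(x,z)\in C\cap\tilde T_\varphi^{-k}C$ for some $k\ge 1$, then $x\in B\cap T^{-k}B$ and $\varphi_k(x)=(z+\varphi_k(x))-z\in Q-Q=[-\epsilon,\epsilon]^d$; this forces $x$ into a $\mu$-null set, so $\{\tilde T_\varphi^n C:n\in\Z\}$ is pairwise disjoint up to $\tilde\mu$-null sets.

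Now intersect with the slab $X\times Q_R$ of finite $\tilde\mu$-measure $(2R)^d$, where $Q_R=[-R,R]^d$. For $x\in B$ satisfying $|\varphi_n(x)|\le R-\epsilon/2$, the whole translated fiber $\{T^n x\}\times(Q+\varphi_n(x))$ lies inside $X\times Q_R$, giving
\[
\tilde\mu\bigl(\tilde T_\varphi^n C\cap(X\times Q_R)\bigr)\ \ge\ \epsilon^d\,\mu\bigl(B\cap\{|\varphi_n|\le R-\epsilon/2\}\bigr).
\]
Summing over $n=k_1,\ldots,k_N$ and invoking the pairwise disjointness yields the packing inequality
\[
\epsilon^d\sum_{i=1}^N\mu\bigl(B\cap\{|\varphi_{k_i}|\le R-\epsilon/2\}\bigr)\ \le\ (2R)^d.
\]

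Set $\Delta_N:=\max_{i\le N}\delta_i$ and $R=\Delta_N+\epsilon/2$. For each $i\le N$ the $i$-th term of the sum is at least $\mu(B)-\mu(\{|\varphi_{k_i}|>\delta_i\})$, which by the main hypothesis is $\ge\mu(B)/2$ once $i\ge i_0$. Therefore $(N-i_0)\epsilon^d\mu(B)/2\le(2\Delta_N+\epsilon)^d$, which forces $\Delta_N\ge cN^{1/d}$ for large $N$ and some $c>0$. Since $\delta_n=o(n^{1/d})$ implies $\Delta_n=o(n^{1/d})$, this is the desired contradiction. The main obstacle is the construction of the wandering box together with the $d$-dimensional volume count; this is exactly where the exponent $1/d$ becomes sharp. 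For the $L^2$ corollary, take $k_n=n$ and $\delta_n:=\sqrt{n^{1/d}\|\varphi_n\|_2}=o(n^{1/d})$, so that Chebyshev gives $\mu(|\varphi_n|\ge\delta_n)\le\|\varphi_n\|_2^2/\delta_n^2=\|\varphi_n\|_2/n^{1/d}\to 0$, and the first part of the lemma applies.
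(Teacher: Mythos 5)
Your argument is correct: the wandering box $B\times[-\epsilon/2,\epsilon/2]^d$ obtained from the negation of Remark~\ref{rem1}~a), the pairwise disjointness of its images under $\tilde T_\varphi^{k_i}$, and the volume count inside $X\times Q_R$ with $R\sim\max_{i\le N}\delta_i$ fit together without gaps, and the Chebyshev reduction for the $L^2$ case is fine (modulo the trivial adjustment if some $\|\varphi_n\|_2=0$). The paper itself does not prove Lemma~\ref{recgen} but defers to \cite{ChCo09}; your proof is essentially the standard packing argument given there, so nothing further is needed.
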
 

{\bf Cocycles over 1-dimensional rotations}

Let $\cal C$ be the class of centered real valued functions with bounded variation (BV) on $\T$.
It contains the centered step functions with a finite number of discontinuities.
If $\varphi$ belongs to $\cal C$, its Fourier coefficients  $c_{r}(\varphi)$ satisfy:
\begin{align}
c_{r}(\varphi) = {\gamma_r(\varphi) \over r}, \,\forall r \not = 0, \text{ with } K(\varphi) := \sup_{r \not = 0} |\gamma_r(\varphi)| < +\infty. \label{majC}
\end{align}
For $\varphi \in \cal C$ with variation $V(\varphi)$, for a rotation $T_\alpha$ on $\T$ and any denominator $q$ of $\alpha$, the ergodic sum $\varphi_q$ over $T_\alpha$ satisfies:
\begin{align}
\|\varphi_q\|_\infty = \sup_x |\sum_{j = 0}^{q-1}f(x+j \alpha)| \leq V(\varphi) \text{ ({\it Koksma's inequality})}. \label{DK0}
\end{align}
If $\varphi$ centered in $L^2(\T)$ satisfies (\ref{majC}), then $\|\varphi_{q}\|_2 \leq 2 \pi \, K(\varphi)$. 
Indeed, for $\psi(x) = \{x\} - \frac12$, we have $\|\psi_q\|_2 \leq \|\psi_q\|_\infty \leq V(\psi)= 1$, by (\ref{DK0}).
Hence for $\varphi$, it holds: 
\begin{eqnarray*}
\|\varphi_{q}\|_2 = (\sum_{r \not = 0} {|\gamma_{r}(\varphi)|^2 \over r^2} \, |{\sin \pi q r \alpha \over \sin \pi r \alpha}|^2)^\frac12
\leq K(\varphi) \, (\sum_{r \not = 0} {1 \over r^2} \, |{\sin \pi q r \alpha \over \sin \pi r \alpha}|^2)^\frac12 = 2 \pi \, K(\varphi) \, \|\psi_{q}\|_2 \leq 2 \pi \, K(\varphi).
\end{eqnarray*}
For $\varphi \in \cal C$, by (\ref{DK0}) we get a bound on the growth of the ergodic sum $\varphi_n$ for a.e $\alpha$:
\begin{proposition} \label{bnd2} Let $\varphi$ be a centered BV function on $\T$. If (\ref{majan}) is satisfied for some $s \geq 0$
(a condition which holds for a.e. $\alpha$), there is a constant $K_s > 0$ such that:
\begin{eqnarray}
\|\varphi_n\|_\infty \leq K_s (\log n)^{1+s}, \, \forall n \geq 2. \label{majergo0}
\end{eqnarray}
When $\alpha$ is of type 1, for every $\varepsilon > 0$ there is a constant $K(\varepsilon) > 0$ such that:
\begin{eqnarray}
\|\varphi_n\|_\infty \leq K(\varepsilon) \, n^{\varepsilon}, \, \forall n \geq 1. \label{majergo2}
\end{eqnarray}
\end{proposition}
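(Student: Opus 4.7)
The plan is to reduce the general ergodic sum $\varphi_n$ to a controlled sum of ergodic sums of length $q_k$ (the denominators of $\alpha$) via the Ostrowski expansion of $n$, and then apply Koksma's inequality (\ref{DK0}) to each piece.

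\textbf{Step 1: Ostrowski decomposition.} Given $n \geq 1$, I would write $n = \sum_{k=0}^{N} b_k \, q_k$ in the Ostrowski base associated with $\alpha$, where $0 \leq b_k \leq a_{k+1}$ and $q_N \leq n < q_{N+1}$. Grouping consecutive iterates of $T_\alpha$ into blocks of length $q_0, q_1, \ldots, q_N$ (each block repeated $b_k$ times), one obtains a telescoping identity
\begin{equation*}
\varphi_n(x) = \sum_{k=0}^{N} \sum_{j=0}^{b_k - 1} \varphi_{q_k}(x + y_{k,j}),
\end{equation*}
for suitable offsets $y_{k,j} \in \T$ depending only on $n$ and $\alpha$. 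Taking sup-norms and invoking Koksma's inequality (\ref{DK0}) for each rigidity time $q_k$ gives
\begin{equation*}
\|\varphi_n\|_\infty \leq V(\varphi) \sum_{k=0}^{N} b_k \leq V(\varphi) \sum_{k=0}^{N} a_{k+1}.
\end{equation*}

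\textbf{Step 2: Proof of (\ref{majergo0}).} Assume $a_n \leq A n^s$. Since the denominators grow at least geometrically (one has $q_{k+1} \geq q_k + q_{k-1}$, hence $q_k \geq F_k$, the Fibonacci numbers), the inequality $q_N \leq n$ forces $N \leq c_1 \log n$ for some absolute $c_1 > 0$. Thus
\begin{equation*}
\sum_{k=0}^{N} a_{k+1} \leq A \sum_{k=1}^{N+1} k^s \leq A' (N+1)^{s+1} \leq K_s (\log n)^{s+1},
\end{equation*}
which combined with the previous display yields (\ref{majergo0}).

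\textbf{Step 3: Proof of (\ref{majergo2}).} If $\alpha$ has type $1$, then for every $\varepsilon' > 0$ we have $\inf_k k^{1+\varepsilon'} \|k\alpha\| > 0$; applied to $k = q_n$ and combined with $\|q_n \alpha\| \leq 1/q_{n+1}$, this gives $q_{n+1} \leq C_{\varepsilon'} \, q_n^{1+\varepsilon'}$, and therefore $a_{n+1} \leq q_{n+1}/q_n \leq C_{\varepsilon'} \, q_n^{\varepsilon'}$. Since $q_k$ grows at least geometrically (say $q_k \geq \lambda^{k}$ with $\lambda > 1$), the sequence $(q_k^{\varepsilon'})_{k \leq N}$ is dominated by a geometric tail ending at $q_N^{\varepsilon'} \leq n^{\varepsilon'}$, hence
\begin{equation*}
\sum_{k=0}^{N} a_{k+1} \leq C_{\varepsilon'} \sum_{k=0}^{N} q_k^{\varepsilon'} \leq C'_{\varepsilon'} \, n^{\varepsilon'}.
\end{equation*}
Choosing $\varepsilon'=\varepsilon$ gives (\ref{majergo2}).

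The only mildly delicate point is the Ostrowski decomposition itself (Step 1), where one has to justify carefully that the number of blocks of length $q_k$ needed is at most $a_{k+1}$ and that their offsets are well-defined; everything else is elementary estimation of partial quotients from the prescribed Diophantine hypotheses.
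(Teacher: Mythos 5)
Your proof is correct and follows essentially the same route as the paper: the paper's own (very brief) proof also combines Koksma's inequality with the $\alpha$-Ostrowski representation $n=\sum_k b_k q_k$ and the bound $m(n)=O(\log n)$ coming from the at-least-exponential growth of the denominators. You have merely written out the block decomposition and the two estimates on $\sum_k a_{k+1}$ that the paper leaves implicit, and these details check out.
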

\proof Let $(q_k)$ be the denominators of $\alpha$. For $n \geq 1$, let $m = m(n)$ be the integer such that $n \in [q_{m(n)}, \, q_{m(n)+1}[$.
As the growth of the sequence $(q_n)$ is at least exponential, we have $m(n) =O(\log n)$.

Now (\ref{majergo0}) and (\ref{majergo2}) follow easily from Koksma's inequality and the $\alpha$-Ostrowski's representation of the integers (cf \cite{Os})
which reads:
\begin{eqnarray*}
\text{ if } n <  q_{m+1}, \  n =\sum_{k=0}^{m} b_k \, q_{k}, \text{with } 0 \leq b_0 \leq a_1 -1, \ 0 \leq b_k \leq a_{k+1} \text{ for } 1 \leq k \leq m. \eop
\end{eqnarray*}
If $\varphi$ is a centered function satisfying (\ref{majC}), the previous proposition is valid with the $L^2$-norm instead of the uniform norm.

\goodbreak
\vskip 3mm
\subsection {Examples of recurrent cocycles over rotations in dimension $\geq 1$} \label{Examples}

\ 

Now we give examples where recurrence can be proved or disproved in dimension $\geq 1$. 

\vskip 3mm
\subsubsection{\bf Examples and counterexamples}

\

For cocycles over 1-dimensional rotations, (\ref{DK0}) can be used for cocycles with values in $\R^d$, $d \geq 1$. 
When the  dimension $\rho$ is  $> 1$, an estimation of the rate of growth of $\varphi_n)$ 
can be obtained in some cases by Fourier analysis methods under an hypothesis on $\alpha$. 

\vskip 2mm
{\bf Example 1.} (Case $\rho = 1$) If the cocycle $(\varphi_n)$ with values in $\R^d$ is generated over a one dimensional rotation by $\varphi$ centered with BV components 
(or more generally satisfying (\ref{majC})), then it is recurrent for any ergodic rotation and any $d \geq 1$ by Koksma's inequality.

\vskip 2mm
{\bf Example 2.} (Case $d = \rho$ > 1) Let $\varphi = (\varphi^1, ..., \varphi^d)$ be such that $\varphi^j$, for each $j$, 
is a centered BV function of the one-dimensional variable $x_j \in \T$.

In this example, the cylinder map on $\T^\rho \times \R^\rho$ : $(x, z) \to (x + \alpha, z + \varphi(x))$ is the product of the cylinder maps on $\T \times \R$:
$(x_i, z_i) \to (x_i + \alpha_i, z_i + \varphi^i(x_i))$. Recurrence follows then immediately for a large class of rotations from Lemma \ref{recgen} and Proposition \ref{bnd2}:
\begin{prop} If the components $\varphi^j$ are in the class $\cal C$  (or more generally satisfy (\ref{majC})), for $\alpha = (\alpha_1, ..., \alpha_\rho)$,
with each $\alpha_i$ of type 1, the cocycle is recurrent. 
\end{prop}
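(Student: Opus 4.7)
The approach is to apply Lemma \ref{recgen} directly, using the componentwise $L^\infty$-bound supplied by Proposition \ref{bnd2}. The key observation is that the cocycle factorizes coordinate-wise: since $\varphi^i$ depends only on the coordinate $x_i$ and the rotation $T_\alpha$ acts independently on each coordinate, the $i$-th component of the ergodic sum,
$$\varphi_n^i(x) \;=\; \sum_{j=0}^{n-1} \varphi^i(x_i + j \alpha_i),$$
is simply the one-dimensional ergodic sum of $\varphi^i$ under $T_{\alpha_i}$ on $\T$, evaluated at $x_i$. This reduces all the analysis to the one-dimensional case that has already been treated.

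First I would invoke Proposition \ref{bnd2} for each $i \in \{1, \dots, \rho\}$: since $\alpha_i$ is of type 1, for every $\varepsilon > 0$ there is a constant $K_i(\varepsilon)$ with $\|S_n^{T_{\alpha_i}} \varphi^i\|_\infty \leq K_i(\varepsilon)\, n^\varepsilon$. Lifting this to $\T^\rho$ and combining the $\rho$ components (taking, say, the supremum norm on $\R^\rho$) yields a single constant $K(\varepsilon)$ such that
$$\|\varphi_n\|_\infty \;\leq\; K(\varepsilon)\, n^\varepsilon, \quad \forall n \geq 1.$$
Note that this bound is deterministic, with no exceptional set in $x$.

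Finally I would apply Lemma \ref{recgen} with $d = \rho$, $k_n = n$, and $\delta_n = K(\varepsilon)\, n^\varepsilon$ for a fixed $\varepsilon \in (0, 1/\rho)$. Then $\delta_n = o(n^{1/d})$ and $\mu(\{x : |\varphi_n(x)| \geq \delta_n\}) = 0$ for every $n$, so both hypotheses of the lemma hold trivially and recurrence follows. There is no genuine obstacle: the whole argument is a direct combination of the coordinate factorization with the one-dimensional growth bound. The only point worth noting is that the slack between the uniform bound $n^\varepsilon$, available for arbitrarily small $\varepsilon$, and the threshold $n^{1/\rho}$ demanded by Lemma \ref{recgen} is precisely what the type 1 assumption on each $\alpha_i$ guarantees; in fact a weaker Diophantine hypothesis in the form (\ref{majan}) would give a $(\log n)^{1+s}$ bound and the same argument would still go through.
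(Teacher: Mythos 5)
Your proof is correct and follows essentially the same route as the paper: the coordinate factorization reduces everything to the one-dimensional bound of Proposition~\ref{bnd2}, which is then fed into Lemma~\ref{recgen} with $d=\rho$ and $\varepsilon<1/\rho$. The only detail worth adding is that for components which merely satisfy (\ref{majC}) without being BV, one should use the $L^2$ version of Proposition~\ref{bnd2} together with the second ($L^2$) criterion of Lemma~\ref{recgen}, since the uniform Koksma bound is only available in the BV case.
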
 
As a.e.\,$\alpha$ is of type 1, the conclusion holds for a.e.\,$\alpha$.
However, as recalled below, for $\varphi$ with component in $\cal C$, recurrence can fail in dimension $\geq 2$ for special choices of $\alpha$.

\vskip 2mm
{\bf Example 3.} {\it Transient cocycles over a 2-dimensional rotation}

In \cite{Yo80} J.-C. Yoccoz constructed a centered transient cocycle given by an analytical function over a
particular 2-dimensional rotation. 

The following centered cocycle is another example of transient cocycle (cf. \cite[Theorem 4.1]{ChCo09}):
Let $\varphi :\T^{2}\rightarrow \R^{2}$ be the function 
$\varphi(x) = (\varphi^1(x_1), \varphi^2(x_2)), \text{ with } \varphi^1 = \varphi^2 =1_{[0,{1\over 5}]}(\{.\})-\frac{1}{5}$. 
There exists an ergodic rotation in $\T^2$, $x \rightarrow x+\alpha$, $\alpha=(\alpha_1, \alpha_2)$, such that
$$|\sum_{k=0}^{n-1} \varphi^1(x_1+k\alpha_1)| + |\sum_{k=0}^{n-1} \varphi^2(x_2+k\alpha_2)| 
\ \ {\buildrel {n \rightarrow +\infty} \over {\longrightarrow}} \ \  +\infty, \text{ for a.e. } (x_1,x_2) \in\T^2.$$
{\it A question}: Recall that a pair $\alpha\in\R^2$ is singular
if $\forall \varepsilon>0,\,\exists Q>1, \,\forall q>Q, \exists n\leq q, \,\mathrm d(n\alpha,\Z^2)\leq \varepsilon q^{-1/2}$.

Singular pairs and more generally singular vectors were defined by Khintchine who showed that the set of singular vectors is of zero Lebesgue measure. 
Recently Y. Cheung \cite{Cheu2011} showed that the set of singular pairs has Hausdorff dimension $4/3$, hence is rather small. 

In the previous examples of transient cocycles, the rotation $T_\alpha$ on $\T^2$ turns out to be associated with a singular pair $\alpha$.
Both constructions were designed to exhibit at least one $\alpha$ for which the cocycle is transient. So the fact that $\alpha$ is singular might just be a technical convenience, 
and a natural question is to construct a transient cocycle over a rotation defined by a non singular pair $\alpha \in \R^2$.

\vskip 8mm
\goodbreak
\subsection{Recurrence for a special class of functions} 

\ 

{\it Notation.} Recall the notation $|u|_+=\max(|u|,1)$, for $u \in \R$. If $\ell_1, \ell_2:\R^2\fff\R$ are two independent linear forms, 
for $h\in\R^2$ we put $R(h)=R_{\ell_1, \ell_2}(h)=|\ell_1(h)|_+|\ell_2(h)|_+$ and define for $s>1$:
\begin{eqnarray*}
W(\ell_1, \ell_2, s):=\{\alpha\in\R^2:R(h)^{s}\|h.\alpha\|\leq 1 \text{ for infinitely many }h\in\Z^2\}.
\end{eqnarray*}
\begin{definition} \label{classG} \rm We denote by ${\cal G}$ the class of centered functions $f:\T^2 \to \R$ such that there exists a finite partition of $[0,1[^2$ 
into triangles $\Delta_j$ such that $f$ has bounded continuous partial derivatives $f'_x, f'_y, f''_{xy}, f''_{yx}$ on the interior of each $\Delta_j$.
\end{definition}

\begin{thm}\label{thm:recurrence} If $\Phi=(\varphi^1,\dots,\varphi^d): \T^2 \to \R^d$ is such that each component $\varphi^i \in \GG$, then  
\hb 1) $\dim_H \{\alpha\in\R^2: (\Phi_{n, \alpha})_n \text{ is not recurrent}\}\leq 2-\frac1{2d-1}$;
\hb 2) $(\Phi_{n, \alpha})_n$ is recurrent if $\alpha$ is a totally irrational algebraic pair.
\end{thm}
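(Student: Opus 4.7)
The plan is to apply Lemma \ref{recgen}: it suffices to find, for each $\alpha$ outside a small set, a strictly increasing sequence $(k_n)$ and thresholds $\delta_n = o(n^{1/d})$ with $\mu(|\Phi_{k_n}| \geq \delta_n) \to 0$, which via Chebyshev amounts to upper-bounding $\|\Phi_{k_n}\|_2$. The technical heart is therefore to derive such a bound through Fourier analysis adapted to the geometry of $\GG$ and to a Diophantine condition on $\alpha$.

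First, for each component $\varphi^i \in \GG$, I would establish a pointwise Fourier-coefficient bound. Since $\varphi^i$ is $C^2$ on the interior of each triangle $\Delta_j$ (with bounded $f'_x, f'_y, f''_{xy}, f''_{yx}$), two successive integrations by parts over each $\Delta_j$, with careful bookkeeping of boundary integrals along edges and vertices of the partition, give
\begin{eqnarray*}
|c_h(\varphi^i)| \leq C \sum_{v} \frac{1}{|\ell_1^v(h)|_+ |\ell_2^v(h)|_+},
\end{eqnarray*}
where $v$ ranges over vertices of the partition of $\varphi^i$ and $\{\ell_1^v, \ell_2^v\}$ is the pair of linear forms dual to the two edges meeting at $v$. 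Writing $R_v(h) = |\ell_1^v(h)|_+ |\ell_2^v(h)|_+$ and using the bound $|(e^{2\pi i n t}-1)/(e^{2\pi i t}-1)|^2 \leq \min(n^2, 1/(4\|t\|^2))$, Parseval yields
\begin{eqnarray*}
\|\Phi_n\|_2^2 \leq C \sum_{v} \sum_{h \neq 0} \frac{1}{R_v(h)^2}\, \min\!\Bigl(n^2,\, \frac{1}{\|h.\alpha\|^2}\Bigr).
\end{eqnarray*}

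Second, under the hypothesis $\alpha \notin W(\ell_1^v, \ell_2^v, s)$ one has $\|h.\alpha\| \geq c/R_v(h)^s$ outside a finite set of $h$. Splitting the sum at $R_v(h) \asymp n^{1/s}$ and using the divisor-type count $\#\{h \in \Z^2_* : R_v(h) \leq T\} \asymp T \log T$ yields $\|\Phi_n\|_2^2 \leq C_s\, n^{2 - 1/s} (\log n)^{O(1)}$. Choosing $s = 2d-1$ and using the subsequence freedom in Lemma \ref{recgen} (with $(k_n)$ and $\delta_n$ selected so that the Chebyshev bound $\|\Phi_{k_n}\|_2^2/\delta_n^2 \to 0$ holds with $\delta_n = o(n^{1/d})$) establish recurrence for every $\alpha$ outside the finite union $\bigcup_v W(\ell_1^v, \ell_2^v, 2d-1)$.

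For part 1, the dimensional bound follows from a Hausdorff--Cantelli covering argument on each $\limsup$-set $W(\ell_1, \ell_2, s)$: the slab $\{\alpha \in [0,1]^2 : \|h.\alpha\| \leq 1/R(h)^s\}$ is covered efficiently at the natural scale $1/(|h|R(h)^s)$, giving $\dim_H W(\ell_1, \ell_2, s) \leq 2 - 1/s$, which for $s = 2d-1$ is $2 - 1/(2d-1)$. For part 2, the Schmidt Subspace Theorem applied to the three linear forms in $(h_1, h_2, n) \in \Z^3$ given by $L_1 = h_1$, $L_2 = h_2$, $L_3 = h_1 \alpha_1 + h_2 \alpha_2 - n$ shows, for algebraic totally irrational $\alpha$ and every $\epsilon > 0$, that $|h_1|_+|h_2|_+\, \|h.\alpha\| \geq c_\epsilon |h|^{-\epsilon}$ for all but finitely many $h \in \Z^2_*$; this places $\alpha$ outside $W(\ell_1, \ell_2, 1+\epsilon)$ for every $\epsilon > 0$ and every pair of linear forms arising from the partitions, and the Fourier argument applies. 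I expect the main difficulty to be reconciling the Fourier exponent $2 - 1/s$ against the target $2/d$ of Lemma \ref{recgen}; since at $s = 2d-1$ the direct Chebyshev bound along $k_n = n$ falls short for $d \geq 2$, closing the gap will require selecting a faster-growing subsequence $(k_n)$ together with sharper (pointwise, rather than $L^2$) control along it, exploiting the clustering structure of the $h$'s that dominate the Fourier sum.
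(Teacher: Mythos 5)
Your proposal reproduces the paper's architecture --- the Fourier bound $|c_h(\varphi^i)|\leq C\sum_k(|\ell_{2k-1}(h)|_+|\ell_{2k}(h)|_+)^{-1}$ obtained by integration by parts over the triangles (Proposition \ref{prop:Fourier}), exceptional sets $W(\ell_1,\ell_2,s)$ handled by Hausdorff--Cantelli (Lemma \ref{lem:Hausdorff}), W.~Schmidt's theorem for algebraic pairs, and Lemma \ref{recgen} as the recurrence criterion --- but the quantitative step that makes the scheme work is missing, and you acknowledge as much in your last sentence. Using the lower bound $\|h.\alpha\|\geq cR(h)^{-s}$ term by term and splitting the Parseval sum at $R(h)\asymp n^{1/s}$ gives only $\|\Phi_n\|_2^2\lesssim n^{2-1/s}(\log n)^{O(1)}$; with $s=2d-1$ this is $n^{2-\frac{1}{2d-1}}$, far above the threshold $o(n^{2/d})$ required by Lemma \ref{recgen} once $d\geq 2$. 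Moreover the ``subsequence freedom'' cannot repair a power-law $L^2$ bound with exponent $\geq 1/d$: since $k_n\geq n$, Chebyshev forces $\delta_n\gg \|\Phi_{k_n}\|_2\gtrsim n^{1-\frac{1}{2s}}$, which is incompatible with $\delta_n=o(n^{1/d})$. The same failure occurs in your part 2, where $s=1+\epsilon$ yields only $\|\Phi_n\|_2^2\lesssim n^{1+\epsilon'}$. So as written your argument establishes recurrence only for $d=1$, and the promised ``sharper pointwise control along a faster-growing subsequence'' is exactly the part that is not supplied.

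The idea you are missing is the paper's Lemma \ref{lem:Niederreiter}. One does not use $\|h.\alpha\|\geq cR(h)^{-s}$ pointwise in the sum; one uses it to show that the values $\|h.\alpha\|$, for $h$ ranging over a box $|\ell_i(h)|_+\leq n_i$, are pairwise separated by at least a constant times $(n_1n_2)^{-s}$, whence $\sum_{h}R(h)^{-2}\|h.\alpha\|^{-t}<\infty$ for \emph{every} $t<2/s$. Combined with the interpolation $\min(N,1/\|h.\alpha\|)^2\leq N^{2-t}\|h.\alpha\|^{-t}$, this gives $\|\Phi_N\|_2^2=O(N^{2-t})$ with $t$ arbitrarily close to $2/s$, an exponent that drops below $2/d$ precisely when $s<\frac{d}{d-1}$. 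This is why the paper's exceptional set is $W(\ell_1,\ell_2,s)$ with $s$ near $\frac{d}{d-1}$ (a much larger set than your $W(\ell_1,\ell_2,2d-1)$, hence a weaker hypothesis to violate), and why the final exponent is $\frac{3+s}{1+s}=2-\frac{1}{2d-1}$ evaluated at $s=\frac{d}{d-1}$. Incidentally, the covering at scale $(|h|R(h)^s)^{-1}$ yields $\dim_H W(\ell_1,\ell_2,s)\leq\frac{3+s}{1+s}$, not the $2-\frac1s$ you assert (the two coincide only at $s=1+\sqrt2$); that $2-\frac1s$ at $s=2d-1$ happens to equal the theorem's exponent is a coincidence arising from working in the wrong parameter regime, and for $s$ close to $1$ (the regime actually needed) the bound $2-\frac1s$ is false.
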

{\it Remark:} The class $\cal G$ contains in particular the functions $\varphi^\Delta := 1_\Delta- \mu(\Delta)$,
where $1_\Delta$ is the indicator of a subset $\Delta$ of the 2-torus whose boundary is a finite union of segments.

In this case Theorem \ref{thm:recurrence} is related to the following result:
It is shown in \cite[Theorem 3.1]{ChCo09} that the cocycle $(\varphi_{n, \alpha}^\Delta)$ generated by $\varphi^\Delta$ over a two dimensional rotation by $\alpha$ satisfies,
for every $\gamma > 0$, for almost every $\alpha \in \T^2$ the bound $\displaystyle \|\varphi_{n, \alpha}^\Delta\|_2 = O(n^\gamma)$.

For a finite family $(\varphi^{\Delta_i}, i=1,\cdots, d)$ with sets $\Delta_i$ as $\Delta$ above, it follows from this bound and Lemma \ref{recgen}
that the $d$-dimensional cocycle $(\Phi_{n, \alpha})$ generated over the rotation by $\alpha$ on $\T^2$ 
by $\Phi = (\varphi^{\Delta_i})_{i=1,\cdots, d}$ is recurrent for a.e. $\alpha$.

Theorem \ref{thm:recurrence} improves this result. Its proof follows the same guideline. 
It will be used to show that the assumptions of Theorem \ref{xymod} below about ergodicity of some cocycles are satisfied 
by a set of rotations in $\T^2$ of Hausdorff dimension 2. 
It is based on the following two propositions whose proof is postponed to the next subsection.

\begin{proposition}\label{prop:Hausdorff} Let $\ell_1, \ell_2:\R^2\fff\R$ be two independent linear forms 
and let $R(h)=R_{\ell_1, \ell_2}(h)=|\ell_1(h)|_+|\ell_2(h)|_+$ for $h\in\R^2$. Let $1<t<2$. Then
\[\dim_H\{\alpha\in\R^2:\sum_{h\in\Z_*^2}\frac{1}{R(h)^2\|h.\alpha\|^{t}}=\infty\}\leq \frac{3+2/t}{1+2/t}.\]
\end{proposition}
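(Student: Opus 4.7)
The plan is to bound $\dim_H E$ by splitting the divergence set into pieces corresponding to the ``axis'' and ``bulk'' parts of the series and estimating each by classical Jarnik-type theorems.

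First I would reduce to a model lattice. Applying the linear change of variables $u = (\ell_1(h), \ell_2(h))$ on the summation index together with the dual change on $\alpha$, the problem becomes to bound
\[E' = \Big\{\beta\in\R^2 : \sum_{u\in\Z_*^2} (|u_1|_+|u_2|_+)^{-2} \|u.\beta\|^{-t} = \infty\Big\},\]
where $\beta$ is related to $\alpha$ by an invertible linear map (preserving Hausdorff dimension). Splitting $\Z_*^2$ into the two axes $\{u_1 u_2=0\}$ and the bulk $\{u_1 u_2\neq 0\}$, we obtain $E' \subseteq E_{\text{ax},1}\cup E_{\text{ax},2}\cup E_{\text{bulk}}$, and it suffices to bound each piece.

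For the axis set $E_{\text{ax},1}$, the series reduces to the one-dimensional sum $\sum_{q\neq 0} q^{-2}\|q\beta_1\|^{-t}$ with $\beta_2$ free. Using the continued fraction expansion of $\beta_1$ with denominators $q_n$ and the estimate $\|q_n\beta_1\|\asymp 1/q_{n+1}$, together with the three-distance theorem to handle non-denominator terms, divergence of this sum is equivalent to the divergence of $\sum_n q_n^{-2}q_{n+1}^t$, which forces $\|q_n\beta_1\|\leq q_n^{-2/t+\varepsilon}$ for infinitely many $n$ and every $\varepsilon>0$. Jarnik's theorem then bounds the dimension of such $\beta_1$ by $2/(1+2/t)$, and the product-slice inequality $\dim_H(A\times\R)\leq \dim_H A + 1$ yields $\dim_H E_{\text{ax},1}\leq 1 + 2/(1+2/t) = (3+2/t)/(1+2/t)$. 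The same bound holds for $E_{\text{ax},2}$ by symmetry.

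For the bulk set $E_{\text{bulk}}$, I would use a dyadic decomposition by scales $N=2^k$: at each scale, the bulk shell contains $\sim N^2$ lattice points with $(u_1u_2)^{-2}\asymp N^{-4}$, and a careful accounting shows that divergence of the bulk sum forces the minimum of $\|u.\beta\|$ over bulk $u$ in the shell to satisfy $\|u.\beta\|\leq |u|^{-4/t+\varepsilon}$ for infinitely many $k$. The two-dimensional Jarnik--Besicovitch theorem then gives $\dim_H E_{\text{bulk}}\leq 1 + 3/(1+4/t) = (4+4/t)/(1+4/t)$; a direct computation verifies $(4+4/t)/(1+4/t)\leq (3+2/t)/(1+2/t)$ for $t\leq 2$, so this contribution is dominated by the axis bound. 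Near-axis bulk terms (those with $|u_1|\ll|u_2|$ or vice versa) are handled separately, in the spirit of the axis case, by a fiber decomposition in the smaller coordinate combined with inhomogeneous 1D Jarnik applied fiber-by-fiber; a countable union over the fixed coordinate then gives a set of dimension at most $(3+2/t)/(1+2/t)$. Combining all contributions yields the claimed bound.

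The main technical obstacle is the rigorous justification of the bulk estimate: the naive contrapositive with threshold $f(u)=|u|^{-\tau}$ yields only $\tau<2/t$, which lies below the Jarnik--Besicovitch regime when $t>1/2$. Overcoming this requires the dyadic-shell analysis that isolates the dominant ``single-minimum'' contribution per scale and exploits the weight $(u_1u_2)^{-2}$ (which is strictly smaller than $|u|^{-2}$ in the bulk-core) to obtain the improved threshold $\tau=4/t$, together with the separate fiber treatment of the near-axis bulk terms.
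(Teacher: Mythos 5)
Your overall strategy (contrapositive plus a Jarnik-type dimension bound for an exceptional limsup set) is in the same spirit as the paper's proof, and your treatment of the two axis contributions and of the bulk core is essentially workable. The gap is in the near-axis bulk regime, say $1\le |u_1|\ll |u_2|$, which is exactly where the product weight $(|u_1|_+|u_2|_+)^{-2}$ differs most from $|u|^{-4}$, and your sketch does not close it. Two concrete problems. First, divergence of the double sum $\sum_{p}p^{-2}\sum_{q}q^{-2}\|p\beta_1+q\beta_2\|^{-t}$ does not imply divergence of any single fiber sum: the inner sums can all be finite (say of order $p$) while the outer sum of $p^{-2}\cdot p$ diverges, so ``a countable union over the fixed coordinate'' does not cover the divergence set. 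Second, even for a single fiber $E_p$, bounding for each fixed $\beta_1$ the dimension of the set of admissible $\beta_2$ (via inhomogeneous one-dimensional Jarnik with shift $p\beta_1$) and then adding $1$ is not a valid inference for Hausdorff dimension: a planar set all of whose vertical fibers are points can have Hausdorff dimension $2$ (graphs of continuous functions), so $\dim_H A\le 1+\sup_{\beta_1}\dim_H A_{\beta_1}$ is false in general. The underlying issue is that in this regime the resonance condition forced by divergence is a product condition, $(|u_1|_+|u_2|_+)^{s}\|u.\beta\|\le 1$ infinitely often, not a norm condition $\|u.\beta\|\le|u|^{-\tau}$; the norm exponent one can extract there is below the critical exponent $2$ of the two-dimensional Jarnik--Besicovitch theorem for $1<t<2$, so that theorem gives nothing. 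What is needed is a dimension bound for the limsup of the strips $\{\beta:(|\ell_1(h)|_+|\ell_2(h)|_+)^{s}\|h.\beta\|\le1\}$, proved directly by a Hausdorff--Cantelli covering argument adapted to the product weights; that single estimate treats axis, near-axis and bulk uniformly, and it is precisely the paper's route (Lemma \ref{lem:Hausdorff} for the covering bound, Lemma \ref{lem:Niederreiter} for the convergence of the series outside that set).

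A secondary issue: your change of variables sends the summation set $\Z^2_*$ to the lattice $\Lambda=\{(\ell_1(h),\ell_2(h)):h\in\Z^2\}$, not to $\Z^2_*$, and $\Lambda$ may meet the coordinate axes only at the origin, so the axis/bulk decomposition has to be formulated for a general lattice; the paper keeps the forms $\ell_i$ throughout and only compares $\Lambda$ with $\Z^2$ cell by cell inside a convergent series, which avoids this. This part is repairable, but as written the reduction to sums over $\Z_*^2$ is not justified, and the main gap above is not.
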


\begin{proposition}\label{prop:Fourier} If $\varphi \in \cal G$, there exist a constant $C$ and $2m$ linear forms $\ell_1^\varphi,\dots, \ell_{2m}^\varphi:\R^2\fff\R$ 
such that for $k=1,\dots,m$,  $\ell_{2k-1}^\varphi, \ell_{2k}^\varphi$ are linearly independent and  the Fourier coefficients of $\varphi$ satisfy
\begin{eqnarray}
|c_n(\varphi)| \leq C\sum_{k=1}^m\frac{1}{|\ell_{2k-1}^\varphi(n)|_+|\ell_{2k}^\varphi(n)|_+}, \forall n\in\Z^2_*. \label{majCoef}
\end{eqnarray}
\end{proposition}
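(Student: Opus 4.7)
The plan is to establish the bound triangle by triangle and sum over the finite partition defining $\mathcal{G}$. Fix a triangle $\Delta=\Delta_j$ of the partition, let $\varphi_j = \varphi \cdot 1_\Delta$, so $c_n(\varphi)=\sum_j c_n(\varphi_j)$. The goal on each triangle is an estimate
\[|c_n(\varphi_j)| \leq C_j\Bigl(\frac{1}{|n_1|_+|n_2|_+} + \sum_{e\in \mathrm{edges}(\Delta)}\frac{1}{|n_1|_+\, |\ell_e(n)|_+}\Bigr),\qquad n=(n_1,n_2)\in \Z^2_*,\]
where $\ell_e(n) = n\cdot v_e$ is the linear form dual to the direction $v_e$ of the edge $e$. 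Summing over the finitely many triangles then yields the proposition with the forms $(n_1,n_2)$ together with pairs $(n_1, \ell_e)$ for each non-horizontal edge in the partition; linear independence inside each pair is automatic since a non-horizontal edge has direction not proportional to $(1,0)$.

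The triangle-wise bound is obtained by a double integration by parts in the ambient $(x,y)$ coordinates. When $n_1 n_2 \ne 0$, using the identity $e^{-2\pi i(n_1 x + n_2 y)} = -(4\pi^2 n_1 n_2)^{-1}\partial_x\partial_y e^{-2\pi i(n_1 x + n_2 y)}$ together with the divergence theorem on $\Delta$ with its piecewise linear boundary, one obtains
\[c_n(\varphi_j) = \frac{-1}{4\pi^2 n_1 n_2}\Bigl[\int_\Delta \partial_x\partial_y\varphi \cdot E\, dx\, dy - \int_{\partial\Delta}\partial_x\varphi \cdot E\, \nu_y\, d\sigma + \int_{\partial\Delta}\varphi \cdot \partial_y E\, \nu_x\, d\sigma\Bigr],\]
with $E=e^{-2\pi i n \cdot x}$ and $\nu=(\nu_x,\nu_y)$ the outward unit normal. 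The interior integral and the first boundary integral are at once bounded by $O(1/(|n_1|_+|n_2|_+))$, using respectively the hypothesis that $\partial_x\partial_y\varphi$ and $\partial_x\varphi$ are bounded on $\Delta$.

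The delicate third term requires a further single integration by parts along each edge. Writing $\partial_y E = -2\pi i n_2 E$ cancels the $n_2$-denominator and leaves $O(1/|n_1|)$ times $\sum_e \nu_x^{(e)}\int_e \varphi\cdot E\, d\sigma$. On each edge $e$ the phase is affine in the arc-length parameter with frequency $\ell_e(n)$, and a single integration by parts along $e$, whose only regularity requirement is a bounded tangential derivative $(v_e\cdot \nabla)\varphi$ (available from $\partial_x\varphi,\partial_y\varphi \in L^\infty$), yields $|\int_e \varphi \cdot E\, d\sigma| = O(1/|\ell_e(n)|_+)$, hence the contribution $O(1/(|n_1|_+|\ell_e(n)|_+))$. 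The degenerate cases $n_1=0$ or $n_2=0$ are handled separately by a single integration by parts in the nonzero coordinate, using that the corresponding marginal of $\varphi$ is Lipschitz on $\T$; the resulting $O(1/|n_i|_+)$ estimate is absorbed by the pair $(n_1, n_2)$ on the right-hand side. The main obstacle to avoid is the tempting change of coordinates to variables $(s,t)$ adapted to two edges of $\Delta$ before the first two integrations by parts: this would require bounds on $\partial_x^2\varphi$ and $\partial_y^2\varphi$, which the definition of $\mathcal{G}$ does not supply. Keeping the first two integrations in the coordinate directions and reserving the edge geometry for the final single integration along each boundary segment is what keeps the argument compatible with the regularity hypotheses of $\mathcal{G}$.
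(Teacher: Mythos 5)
Your proof is correct and follows essentially the same route as the paper's: two integrations by parts in the coordinate directions produce an interior term plus boundary terms, and a further one-dimensional integration by parts along each edge yields the factor $1/|\ell_e(n)|_+$ dual to the edge direction, using exactly the regularity $\varphi'_x,\varphi'_y,\varphi''_{xy},\varphi''_{yx}$ that the class $\mathcal G$ provides. The only difference is organizational: the paper first normalizes each triangle to $\Delta(a,b,c)$ with a vertical edge and carries out the two integrations as iterated one-dimensional integrals (so its edge forms end up paired with the frequency $t$ rather than with $n_1$), whereas you apply Green's theorem on a general triangle and parametrize the edges by arc length, which treats all edges symmetrically and avoids the preliminary reduction.
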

\proof [\bf Proof of Theorem \ref{thm:recurrence}] 

1) Let $\Phi=(\varphi^1,\dots,\varphi^d)$ be such that each $\varphi^i$ is in $\GG$ and centered. 
With $\varphi=\varphi_i$, let $\ell_1^\varphi,\dots, \ell_{2m}^\varphi$ be the linear forms given by Proposition \ref{prop:Fourier}. 

Let $0<t<2$. If $\alpha$ is not in the set
$$E^t_{\varphi}
=\left\{\alpha\in\R^2:\sum_{k=1}^m\sum_{h\in\Z^2_*}\frac{1}{(|\ell_{2k-1}^\varphi (h)|_+|\ell_{2k}^\varphi (h)|_+)^2\|h.\alpha\|^{t}}=\infty\right\},$$
then by (\ref{majCoef})
\begin{eqnarray*}
\sum_{h\in\Z^2_*}\frac{| c_h(\varphi)|^2}{\|h.\alpha\|^t}
&\leq& C^2 \sum_{h\in\Z^2_*}\big(\sum_{k=1}^m\frac{1}{|\ell_{2k-1}^\varphi (h)|_+|\ell_{2k}^\varphi (h)|_+}\big)^2\frac{1}{\|h.\alpha\|^t}\\
&\leq& C^2 \, m \sum_{k=1}^m\sum_{h\in\Z^2_*}\frac{1}{(|\ell_{2k-1}^\varphi (h)|_+|\ell_{2k}^\varphi (h)|_+)^2\|h.\alpha\|^{t}}<\infty.
\end{eqnarray*}
Since $\varphi$ is centered, we have
\begin{align*}
&\|\sum_{k=0}^{N-1}\varphi(. + k\alpha)\|^2_2 = \|\sum_{k=0}^{N-1}\sum_{h\in\Z^2_*} c_h(\varphi)e^{2i\pi \langle h, (.+k\alpha)\rangle}\|^2_2
= \sum_{h\in\Z^2_*}| c_h(\varphi)|^2|\sum_{k=0}^{N-1}e^{2i\pi k\langle h, \alpha\rangle}|^2 \\
&\leq \sum_{h\in\Z^2_*}| c_h(\varphi)|^2 \inf(N,\frac1{\|h.\alpha\|})^2
\leq \sum_{h\in\Z^2_*}| c_h(\varphi)|^2 (N^{1-t/2}\frac1{\|h.\alpha\|^{ t/2}})^2
\leq  N^{2-t}\sum_{h\in\Z^2_*}\frac{| c_h(\varphi)|^2}{\|h.\alpha\|^{t}}.
\end{align*}
It follows that, if $2-\tfrac2d<t<2$ and  $\alpha\notin \cup_{i=1}^d E^t_{\varphi_i}$, then $\|\sum_{k=0}^{N-1}\Phi(. + k\alpha)\|_2^2=O(N^{2-t})=o(N^{\frac2d})$,
which implies that the cocycle $(\Phi_{n, \alpha})_n$ is recurrent by Lemma \ref{recgen}. 

Therefore, if $\alpha \notin \bigcap_{2-\frac2d<t<2}\bigcup_{i=1}^d E^t_{\varphi_i}$, $(\Phi_n)_n$ is recurrent. 
Finally, by Proposition \ref{prop:Hausdorff},
$$\dim_H \bigl(\bigcap_{2-\frac2d<t<2} \, \bigcup_{i=1}^d E^t_{\varphi_i} \bigr)\leq \inf_{2-\frac2d<t<2}\frac{3+2/t}{1+2/t}
=\frac{3+2/(2-\frac2d)}{1+2/(2-\frac2d)}=2-\frac1{2d-1}. $$ 

2) If $\alpha$ is algebraic, by W. Schmidt's theorem \cite[Theorem 2]{WSch70} on simultaneous approximation to irrational numbers by rationals, 
$\alpha \not \in W(\ell_1, \ell_2, s)$ for $s= 1+\varepsilon$, for every $\varepsilon > 0$. By Lemma \ref{lem:Niederreiter} below it follows
$\displaystyle \sum_{h\in\Z_*^2}\frac{1}{R(h)^2\|h.\alpha\|^{t}}<\infty, \, \forall t \in ]1, \frac{2}{1+ \varepsilon}[$
and as above, taking $\varepsilon$ small enough we obtain recurrence in any dimension $d$ by Lemma \ref{recgen}. \eop

\subsubsection{Proof of Propositions \ref{prop:Hausdorff} and \ref{prop:Fourier}}

\ 

For Proposition \ref{prop:Hausdorff} we need two lemmas. 
The first one is a simple consequence of the Hausdorff-Cantelli lemma (see \cite{BeDo99}). The second lemma is adapted from Niederreiter \cite{Ni72}.
\begin{lem}\label{lem:Hausdorff} Let $\ell_1, \ell_2:\R^2\fff\R$ be two  independent linear forms and let $s>1$. 
Then, $\displaystyle \dim_H W(\ell_1, \ell_2,s)\leq\frac{3+s}{1+s}$.
\end{lem}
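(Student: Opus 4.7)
The plan is to exploit the Hausdorff-Cantelli lemma: if we cover $W(\ell_1,\ell_2,s)$ (intersected with a fundamental domain) by a countable family of sets $\{B_i\}$ with $\sum_i \operatorname{diam}(B_i)^\tau < \infty$, and if $W(\ell_1,\ell_2,s)$ is contained in the corresponding $\limsup$, then $\dim_H W(\ell_1,\ell_2,s) \le \tau$.

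First, I would reduce to a bounded region. Since the condition $R(h)^{s}\|h.\alpha\|\le 1$ is invariant under integer translations of $\alpha$, it suffices to bound $\dim_H(W(\ell_1,\ell_2,s)\cap[0,1]^2)$. For each $h\in\Z^2_*$ set $A_h=\{\alpha\in[0,1]^2: \|h.\alpha\|\le R(h)^{-s}\}$, so that $W(\ell_1,\ell_2,s)\cap[0,1]^2\subseteq \limsup_h A_h$.

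Second, I would describe $A_h$ geometrically. As $\alpha$ ranges over $[0,1]^2$, $h.\alpha$ ranges over an interval of length $O(|h|)$, so $A_h$ is a union of $O(|h|)$ parallel strips orthogonal to $h$, each of length $O(1)$ and thickness $2R(h)^{-s}/|h|$. Covering each strip by squares of side $w_h:=R(h)^{-s}/|h|$ requires $O(1/w_h)$ squares, yielding a total of $N_h=O(|h|^2 R(h)^s)$ squares of diameter $\asymp w_h$. Summing these covers over all $h$ gives a countable family whose $\limsup$ contains $W(\ell_1,\ell_2,s)\cap[0,1]^2$.

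Third, I would apply Hausdorff-Cantelli. The $\tau$-sum is
\begin{equation*}
\sum_{h\in\Z^2_*} N_h\,w_h^\tau \;\asymp\; \sum_{h\in\Z^2_*} |h|^{2-\tau}\,R(h)^{s(1-\tau)}.
\end{equation*}
This is the main technical step. Since $\ell_1,\ell_2$ are linearly independent, the change of variables $(u,v)=(\ell_1(h),\ell_2(h))$ maps $\Z^2$ to a lattice with $|h|\asymp\max(|u|,|v|)$ and $R(h)=|u|_+|v|_+$. Splitting the sum into the regions $|u|\ge|v|$ and $|u|<|v|$, and comparing with the corresponding integrals (treating $|v|<1$ separately, where $|v|_+=1$), convergence requires $s(1-\tau)<-1$ and $(2-\tau)+s(1-\tau)<-1$. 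For $s>1$ the binding condition is the second, giving $\tau>\frac{3+s}{1+s}$. Hence for every such $\tau$, Hausdorff-Cantelli yields $\dim_H(W(\ell_1,\ell_2,s)\cap[0,1]^2)\le\tau$, and letting $\tau\downarrow\frac{3+s}{1+s}$ finishes the proof.

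The principal obstacle is the bookkeeping in the convergence computation in step three, particularly making sure the $|\cdot|_+$ regularization of $R(h)$ (which saves us when one of $\ell_i(h)$ is small) is handled correctly when changing from the sum over the lattice $\Z^2$ to an integral in $(u,v)$-coordinates, and confirming that the condition $s>1$ makes the exponent on $|h|$ (rather than on $R(h)$) the binding one.
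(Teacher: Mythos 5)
Your proposal is correct and follows essentially the same route as the paper: cover the set $V(s,h)=\{\alpha:R(h)^s\|h.\alpha\|\le 1\}$ by $O(|h|^2R(h)^s)$ balls of radius $\asymp (|h|R(h)^s)^{-1}$, apply the Hausdorff--Cantelli lemma to the resulting sum $\sum_h|h|^{2-\tau}R(h)^{s(1-\tau)}$, and pass to the lattice $\Lambda=\{(\ell_1(h),\ell_2(h))\}$ (with boundedly many points per unit cell) to reduce convergence to the two conditions $s(1-\tau)<-1$ and $2-\tau+s(1-\tau)<-1$, of which the second is binding for $s>1$. The only cosmetic difference is that you work in a fundamental domain $[0,1]^2$ and compare with integrals where the paper compares with the product of two one-variable series over $\Z$; the exponents and the resulting bound $\frac{3+s}{1+s}$ agree.
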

\proof  \ Given $h=(h_1,h_2)\in\Z_*^2$,  the set $L(h)=\{x\in\T^2: h.x \in \Z\}$  is a union of $\gcd(h_1,h_2)$  
one dimensional tori. The total length of $L(h)$ is the Euclidean norm $|h|$ of $h$. 
The set $V(s,h) := \{x\in\T^2:R(h)^s\|h.x\|\leq 1\}$ is included in a strip around $L(h)$ of width $\displaystyle \frac{1}{|h|R(h)^s}$
and can be covered with $n(h)=|h|^2R(h)^s$ balls of radius $\displaystyle r(h)=\frac{4}{|h|R(h)^s}$.

By the Hausdorff-Cantelli lemma, for $t>0$, if $\sum_{h\in\Z^2_*}n(h)r(h)^t=4^t\sum_{h\in\Z^2_*}|h|^{2-t}R(h)^{s-st}<\infty$,
then the Hausdorff dimension of $W(\ell_1, \ell_2,s)$ is $\leq t$.

For $0<t<2$, using the equivalence of norms, we obtain
\begin{align*}
\sum_{h\in\Z^2_*}|h|^{2-t}R(h)^{s-st}&\leq C\sum_{h\in\Z^2_*}(|\ell_1(h)|+|\ell_2(h)|)^{2-t}R(h)^{s-st}\\
&= C \sum_{h\in\Z^2_*}(|\ell_1(h)|+|\ell_2(h)|)^{2-t}(|\ell_1(h)|_+|\ell_2(h)|_+)^{s(1-t)}.
\end{align*}
So it suffices to bound from above the two series	$ \sum_{h\in\Z^2_*} |\ell_i(h)|^{2-t}(|\ell_1(h)|_+|\ell_2(h)|_+)^{s(1-t)}$, $i=1,2$.
The set $\Lambda=\{(\ell_1(h), \ell_2(h)):h\in\Z^2\}$ is a lattice in $\R^2$. 
 
Let $P=\{x\in\R^2:-\tfrac12\leq x_i<\tfrac12,i=1,2\}$. 
Since for all $x\in\R^2$, $\Card \, \Lambda\cap(x+P) \leq C'$ for some constant $C'$, we have for $1<t<2$,
\begin{align*}
&\sum_{h\in\Z^2_*}|\ell_1(h)|^{2-t}(|\ell_1(h)|_+|\ell_2(h)|_+)^{s(1-t)}=\sum_{(x_1,x_2)\in\Lambda\setminus\{0\}}|x_1|^{2-t}(|x_1|_+|x_2|_+)^{s(1-t)}\\
&=\sum_{(n_1,n_2)\in\Z^2} \, \sum_{(x_1,x_2)\in (\Lambda\setminus\{0\})\cap((n_1,n_2)+P)}|x_1|^{2-t}(|x_1|_+|x_2|_+)^{s(1-t)}\\
&\leq C'\sum_{(n_1,n_2)\in\Z^2}(|n_1|_+)^{2-t}(|n_1|_+|n_2|_+)^{s(1-t)}=C'\sum_{n_1\in\Z}|n_1|_+^{2-t+s(1-t)}\sum_{n_2\in\Z}|n_2|_+^{s(1-t)}.
\end{align*}
The product of the two series is finite when $s(1-t)<-1$ and $2-t+s(1-t)<-1$. Since $s > 1$, the product of the series converges 
when $2>t>\max(\frac{1+s}{s},\frac{3+s}{1+s})=\frac{3+s}{1+s}$. 

The conclusion is the same for $i=2$. \eop

\begin{lem}\label{lem:Niederreiter} Let $\ell_1, \ell_2:\R^2\fff\R$ be two  independent linear forms. Let $s>1$ 
and let $\alpha \not \in W(\ell_1, \ell_2, s)$ be totally irrational. Then,
$$\displaystyle \sum_{h\in\Z_*^2}\frac{1}{R(h)^2\|h.\alpha\|^{t}}<\infty, \, \forall t \in ]1, 2/s[.$$
\end{lem}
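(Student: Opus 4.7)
The plan is a dyadic decomposition in $(R(h),\|h.\alpha\|)$ combined with a pigeonhole argument exploiting the hypothesis $\alpha\notin W(\ell_1,\ell_2,s)$. Since $\alpha\notin W(\ell_1,\ell_2,s)$, there is a finite set $F\subset \Z^2_*$ such that $\|h.\alpha\|>R(h)^{-s}$ for every $h\in \Z^2_*\setminus F$. As $\alpha$ is totally irrational, $\|h.\alpha\|>0$ for each $h\in F$, so the finite contribution from $F$ is harmless, and I partition $\Z^2_*\setminus F$ into dyadic cells
\[P_{a,b}=\{h\in\Z^2_*:|\ell_1(h)|_+\in[2^a,2^{a+1}),\;|\ell_2(h)|_+\in[2^b,2^{b+1})\},\quad a,b\geq 0,\]
on which $R(h)\asymp 2^{a+b}$, and a standard lattice-point count in the parallelogram image gives $|P_{a,b}|=O(2^{a+b})$.

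The central step is a separation estimate for the values $\{h.\alpha\bmod 1:h\in P_{a,b}\}$. If $h_1,h_2\in P_{a,b}$, the linearity of $\ell_1,\ell_2$ gives $R(h_1-h_2)\leq 16\cdot 2^{a+b}$, and the hypothesis applied to $h=h_1-h_2$ (assuming it is not in $F$) forces $\|(h_1-h_2).\alpha\|>(16\cdot 2^{a+b})^{-s}$. Hence, apart from $O(1)$ exceptional close pairs coming from $F$, the values $h.\alpha\bmod 1$ indexed by $h\in P_{a,b}$ have pairwise distances $\geq c\cdot 2^{-s(a+b)}$ in $\T$; pigeonholing in an interval of length $2\varepsilon$ then yields
\[\#\{h\in P_{a,b}:\|h.\alpha\|\leq \varepsilon\}\leq C(\varepsilon\cdot 2^{s(a+b)}+1).\]

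Integrating $\|h.\alpha\|^{-t}$ against this distribution function, with the natural lower cutoff $\|h.\alpha\|\geq c'\cdot 2^{-s(a+b)}$ coming from the Diophantine assumption, a short computation gives $\sum_{h\in P_{a,b}}\|h.\alpha\|^{-t}=O(2^{st(a+b)})$ for $t>1$. Since $R(h)^{-2}\asymp 2^{-2(a+b)}$ on $P_{a,b}$, the contribution of $P_{a,b}$ to the full sum is $O(2^{(a+b)(st-2)})$, and summing over $(a,b)\in\Z_{\geq 0}^2$ produces $\sum_{k\geq 0}(k+1)\cdot 2^{k(st-2)}$, which converges for $st<2$, i.e., $t<2/s$, as required.

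The main technical difficulty I anticipate is making the separation-counting bound uniform across all $(a,b)$: one must cleanly handle the degenerate rectangles where $a=0$ or $b=0$ (so that $|\ell_i(h)|_+$ is clipped to $1$ and the rectangle degenerates to a strip), and bookkeep the finite exceptional set $F$ so that its influence on the cluster count stays $O(1)$ independently of $(a,b)$. Once these uniform constants are pinned down, the final convergence threshold $t<2/s$ follows from the geometric summation above.
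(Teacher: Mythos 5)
Your proof is correct and rests on the same two pillars as the paper's: the separation estimate for the values $h.\alpha$ obtained by applying the hypothesis $\alpha\notin W(\ell_1,\ell_2,s)$ to differences $h_1-h_2$ lying in a comparable box, and the resulting bound $\sum \|h.\alpha\|^{-t}\lesssim \delta^{-t}$ over that box, valid because $t>1$. The only genuine difference is organizational: you sum over explicit dyadic cells in $(|\ell_1(h)|_+,|\ell_2(h)|_+)$ and close with the geometric series $\sum_{k\geq 0}(k+1)2^{k(st-2)}$, whereas the paper sums over all boxes $\{|\ell_i(h)|_+\leq n_i\}$ weighted by $n_1^{-3}n_2^{-3}$ (whose sum over $n_i\geq |\ell_i(h)|_+$ reproduces $R(h)^{-2}$ up to constants) and permutes the order of summation; both give the threshold $st<2$. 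One simplification you should adopt: there is no need to carry the finite exceptional set $F$ through the separation step. Since $\alpha$ is totally irrational, $\|h.\alpha\|>0$ for each of the finitely many $h\in F$, so a single constant $c>0$ satisfies $R(h)^s\|h.\alpha\|\geq c$ for \emph{every} $h\in\Z^2_*$; the pairwise separation then holds for all pairs and the ``$O(1)$ exceptional close pairs'' bookkeeping disappears. As literally stated that bookkeeping is slightly off anyway --- the number of pairs $(h_1,h_2)$ with $h_1-h_2\in F$ grows with $|P_{a,b}|$, and one would have to pass to a maximal $F$-separated subset, losing a factor $1+2|F|$ in the cluster count --- so eliminating $F$ at the outset is both cleaner and safer. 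The degenerate cells $a=0$ or $b=0$ that you worry about cause no trouble, since the clipping $|\cdot|_+\geq 1$ only tightens the estimates.
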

\proof \ Since $\alpha$ is totally irrational and since $R_{\ell_1, \ell_2}(h)^s\|h.\alpha\|\leq 1$ has only finitely many solutions $h\in\Z^2$
for $\alpha \not \in W(\ell_1, \ell_2, s)$, there exists a constant $c>0$ such that  for all $h\in\Z_*^2$, $ R(h)^s\|h.\alpha\|\geq c$.

Let us estimate the sum $\displaystyle \sum_{h\in\Z_*^2:|\ell_i(h)|_+\leq n_i,i=1,2}\frac1{\|h.\alpha\|^t}$,
for $n=(n_1,n_2)\in\Z^2$ with $n_1,n_2\geq 1$. 

Observe that $|\|x\|-\|y\||=\min(\|x-y\|,\|x+y\|)$ for any $x, y \in \R$ (where $\|u\|= \inf |u _ n|$, for $u \in \R$). 

For every pair $(h,h')$ with $h\neq h'$ and $h,h'$ both in the domain of summation, we have $|\ell_i(h\pm h')|_+\leq |\ell_i(h)|_++|\ell_i(h')|_+\leq 2n_i$.
It follows
\begin{eqnarray*}
|\|h.\alpha\|-\|h'.\alpha\||&=& \min(\|(h-h').\alpha\|,\|(h+h').\alpha\|)\\
&\geq& c\min(R(h-h')^{-s},R(h+h')^{-s}) \geq \frac{c}{(2n_1)^s(2n_2)^s}. \end{eqnarray*}		
Therefore, putting $\displaystyle \delta =\frac{c}{4^s}\frac{1}{n_1^s n_2^s}$, each interval $[k\delta,(k+1)\delta[$, $k=1,\dots,\lceil 1/\delta\rceil$ 
contains at most one point $\|h\alpha\|$ with $h$ in the domain of summation. 
We also have $\displaystyle \|h.\alpha\|\geq \frac{c}{R(h)^s}\geq\frac{c}{n_1^sn_2^s}$ for $h$ in the domain of summation.
Since $t>1$, it follows
\[\sum_{h\in\Z_*^2,|\ell_i(h)|_+\leq n_i,i=1,2}\frac1{\|h.\alpha\|^t}\leq \sum_{k\geq 1}\frac1{(k\delta)^t}\leq C\frac1{\delta^{t}}.\]
Using the inequality
\[\sum_{(n_1,n_2):n_i\geq |\ell_i(h)|_+,\, i=1,2}\frac1{n_1^{3}n_2^{3}}\geq \frac{c'}{|\ell_1(h)|_+^{2}|\ell_2(h)|_+^{2}}=\frac{c'}{R(h)^{2}},\]
satisfied for some constant $c' > 0$, we obtain
\[\sum_{h\in\Z_*^2}\frac1{R(h)^2\|h.\alpha\|^t} \leq C\sum_{h\in\Z_*^2}\frac1{\|h.\alpha\|^t}\sum_{(n_1,n_2):n_i\geq |\ell_i(h)|_+, i=1,2}\frac1{n_1^{3}n_2^{3}}.\]
Then, by permuting the order of summation, we obtain
\begin{align*}
\sum_{h\in\Z_*^2}\frac1{R(h)^2\|h.\alpha\|^t}	
&\leq {C \over c'} \sum_{(n_1,n_2)\in\N_*^2}\frac1{n_1^{3}n_2^{3}}\sum_{h\in\Z_*^2:|\ell_i(h)|_+\leq n_i,i=1,2}\frac1{\|h.\alpha\|^t}\\
&\leq C'\sum_{(n_1,n_2)\in\N_*^2}\frac1{n_1^{3}n_2^{3}} \frac1{\delta^t} \, \leq \, C'\sum_{(n_1,n_2)\in\N_*^2}\frac1{n_1^{3-ts}n_2^{3-ts}}.
\end{align*} 
The last series converges if $3-ts>1$, i.e., if $t<2/s$. \eop

\proof [\bf Proof of Proposition \ref{prop:Hausdorff}] Let $1<t < 2$ and $s\in (1,2/t)$. By Lemma \ref{lem:Hausdorff}, the Hausdorff dimension of the set $W(\ell_1, \ell_2,s)$
is $\displaystyle \leq \frac{3+s}{1+s}$. By Lemma \ref{lem:Niederreiter}, if $\displaystyle \sum_{h\in\Z_*^2}\frac{1}{R(h)^2\|h.\alpha\|^{t}}=\infty$, with $1<t < 2/s$,
then $\alpha\in W(\ell_1, \ell_2,s)$.
\[\hspace{-1.3cm}\text{Therefore, } \dim_H\{\alpha\in\R^2:\sum_{h\in\Z_*^2}\frac{1}{R(h)^2\|h.\alpha\|^{t}}=\infty\}\leq\inf_{1<s<2/t }\frac{3+s}{1+s}
=\frac{3+2/t}{1+2/t}. 
\eop\]
\proof [\bf Proof of Proposition \ref{prop:Fourier}] (Bound on the Fourier coefficients of $f$ in $\cal G$)
 
From the definition of $\cal G$, it suffices to prove (\ref{majCoef}) for $f$ supported by a triangle $\Delta$ such that $f$ has bounded continuous partial derivatives 
$f'_x, f'_y,  f''_{xx}, f''_{xy}, f''_{yx}$ on the interior of $\Delta$. Using translations, vertical axis symmetries and by cutting the triangle along a vertical line 
through one of its vertices, we can reduce to the triangles $\Delta=\Delta(a,b,c)$ where $(0,0), (a,b), (0,c)$ are the vertices of $\Delta$ and  $a,b,c$ 
are real numbers such that $0 < a,c \leq 1$ and $c-1 \leq b \leq 1$. So we are reduced to the proposition:
\begin{proposition} Let $f$ be a function supported in $\Delta(a,b, c)$ with bounded continuous partial derivatives $f'_x, f'_y, f''_{xx}, f''_{xy}, f''_{yx}$
on the interior of $\Delta(a,b,c)$.
Then its Fourier coefficients $c_f(s,t) = \int_0^1 \int_0^1 f(x,y) e^{-2\pi i (sx+ty)} \, dx dy$ satisfy,
for a finite constant $K$, 
\begin{eqnarray}
|c_f(s,t)| &\leq& K\ \bigl({1 \over |t|_+ |s|_+} + {1 \over |t|_+ |bt+ as|_+} + {1\over |t|_+ |(b-c)t+ as|_+} \bigr), \, \forall s, t \in \Z. \label{coeff0}
\end{eqnarray}
\end{proposition}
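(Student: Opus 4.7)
\emph{Plan.} I would write
\[c_f(s,t)=\int_0^a e^{-2\pi i sx}F(x)\,dx,\quad F(x)=\int_{\phi_1(x)}^{\phi_2(x)} f(x,y)e^{-2\pi i ty}\,dy,\]
where $\phi_1(x)=bx/a$ and $\phi_2(x)=c+(b-c)x/a$ parameterize the edges from $(0,0)$ to $(a,b)$ and from $(0,c)$ to $(a,b)$; note $\phi_1(a)=\phi_2(a)=b$, so $F(a)=0$. The strategy is to integrate by parts once in $y$ and once in $x$, handling the range $|t|\leq 1$ separately to avoid the spurious $1/t$ singularity that the first IBP would introduce.

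For $|t|\leq 1$, a single IBP in $x$ gives $|c_f(s,t)|\leq K/|s|_+$, using $F(a)=0$, the boundedness of $F(0)$, and the boundedness of $F'(x)$ (which involves $f$ on the two edges, $f'_x$, and the constants $\phi'_j$); for $s=0$ one uses the trivial area bound. Since $|t|_+=1$ here, this is absorbed by the first term of the claim. For $|t|>1$, IBP in $y$ produces $c_f=T_1+T_2+T_3$, with
\[T_j=\frac{(-1)^j}{2\pi it}\int_0^a f(x,\phi_j(x))e^{-2\pi i(sx+t\phi_j(x))}\,dx,\ \ j=1,2,\qquad T_3=\frac{1}{2\pi it}\int_0^a G(x)e^{-2\pi isx}\,dx,\]
where $G(x)=\int_{\phi_1(x)}^{\phi_2(x)} f'_y(x,y) e^{-2\pi ity}\,dy$. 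The phases in $T_j$ are linear in $x$ with slopes $\sigma_1=s+(b-c)t/a$ and $\sigma_2=s+bt/a$, and the edge derivatives $f'_x+\phi'_j f'_y$ are bounded. A second IBP in $x$ then gives $|T_j|\leq C/(|t|\,|\sigma_j|_+)$; pairing with the trivial bound $|T_j|\leq C/|t|$ and using $a\in(0,1]$ with $|as+(b-c)t|=a|\sigma_1|$, $|as+bt|=a|\sigma_2|$, this rearranges into the desired bounds $K/(|t|_+\,|as+(b-c)t|_+)$ and $K/(|t|_+\,|as+bt|_+)$.

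The subtle step is $T_3$, because the hypothesis does not include a bound on $f''_{yy}$. IBP in $x$ on $T_3$ uses $G(a)=0$ (again from $\phi_1(a)=\phi_2(a)$), while the other boundary term $G(0)=\int_0^c f'_y(0,y)e^{-2\pi ity}\,dy$ must be bounded directly by $c\,\|f'_y\|_\infty$; trying to integrate by parts in $y$ here would require $f''_{yy}$, which is exactly what we do not have. This is the main obstacle, but it is enough: the interior derivative $G'(x)$ splits into two bounded edge contributions (involving $f'_y$) and an integral of $f''_{yx}$, bounded by hypothesis, so $\|G'\|_\infty<\infty$. Thus IBP in $x$ yields $|T_3|\leq C/(|s||t|)$ for $s\neq 0$; combined with the trivial bound $|T_3|\leq C/|t|$ this gives $|T_3|\leq K/(|t|_+\,|s|_+)$, which is the first term of the claim. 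Summing $T_1,T_2,T_3$ completes the proof.
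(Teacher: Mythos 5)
Your proposal is correct and follows essentially the same route as the paper: integrate by parts in $y$ to split $c_f$ into two edge terms plus an interior term involving $f'_y$, then integrate by parts in $x$ on each, observing that the only delicate boundary contribution $\int_0^c f'_y(0,y)e^{-2\pi ity}\,dy$ is bounded directly so that $f''_{yy}$ is never needed. The only difference is organizational: you treat $|t|\le 1$ by a single integration by parts in $x$, whereas the paper performs the full decomposition for all $t\ne 0$ and falls back on the trivial bounds for the three integrals whenever $|s|$, $|tb+as|$ or $|t(b-c)+as|$ is $\le 1$.
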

\proof  We have
$c_f(s,t) = \int_0^a I_t(x) \, e^{-2\pi i sx} dx, \text{ with } I_t(x) = \int_{\frac{b}{a} x}^{\frac{b-c}{a}x+c} f(x,y) \, e^{-2\pi i ty} \, dy.$

Fort $t \not = 0$, using integration by parts we get $\displaystyle I_t(x) = {1\over - 2\pi it} \, [A_t(x) - B_t(x) - C_t(x)]$, with
\begin{eqnarray*}
&A_t(x) = f(x,\frac{b-c}{a}x+c) \, e^{-2\pi i t(\frac{b-c}{a}x+c)}, \\
&B_t(x) =  f(x, \frac{b}{a} x) \, e^{-2\pi i t{\frac{b}{a} x}}, 
\ C_t(x) = \int_{\frac{b}{a} x}^{\frac{b-c}{a}x+c} f_y'(x, y) \, e^{-2\pi i ty} \, dy.
\end{eqnarray*}
\begin{eqnarray*}
&&\text{If } t(b-c)+ sa \not = 0, \text{ then } \\
&&\int_0^a A_t(x) e^{-2\pi i sx} \, dx = e^{-2\pi i t c} \int_0^a f(x,\frac{b-c}{a}x+c) \, e^{-2\pi i (t(\frac{b-c}{a}) + s)x} \, dx \\
&=&{a \over - 2\pi i(t(b-c)+ sa)} \, [ f(a,b) \, e^{-2\pi i (tb + sa)} -  f(0,c) \, e^{-2\pi i t c} \\
&&-  e^{-2\pi i t c} \int_0^a \bigl(f'_x(x,\frac{b-c}{a}x+c) + \frac{b-c}{a} f'_y(x,\frac{b-c}{a}x+c\bigr) \, e^{-2\pi i (t(\frac{b-c}{a}) + s)x} \, dx].
\end{eqnarray*}
\begin{eqnarray*}
&&\text{If }tb + sa \not = 0,  \text{ then } \int_0^a B_t(x) e^{-2\pi i sx} \, dx = \int_0^a f(x,\frac{b}{a}x) \, e^{-2\pi i (t \frac{b}{a} + s)x} \, dx \\
&&\hspace{-1cm} ={a \over - 2\pi i(tb + sa)} \, [ f(a,b) e^{-2i\pi(tb+sa)}-  f(0,0) 
- \int_0^a \bigl(f'_x(x,\frac{b}{a}x) + \frac{b}{a} f'_y(x,\frac{b}{a}x)\bigr) \, e^{-2\pi i (t \frac{b}{a} + s)x} \, dx].
\end{eqnarray*}
\begin{eqnarray*}
&&\text{If } s \not = 0,  \text{ then } \int_0^a C_t(x) e^{-2\pi i sx} \, dx 
= \int_0^a (\int_{\frac{b}{a} x}^{\frac{b-c}{a}x+c} f_y'(x, y) \, e^{-2\pi i ty} \, dy) \, e^{-2\pi i sx} \, dx \\
&&= {1\over - 2\pi i s} \, [- \int_0^c f_y'(0, y) \, e^{-2\pi i ty} \, dy
- \int_0^a \ {d\over dx} (\int_{\frac{b}{a} x}^{\frac{b-c}{a}x+c} f_y'(x, y) \, e^{-2\pi i ty} \, dy) \, \e^{-2\pi i sx} \, dx].
\end{eqnarray*}
The last integrand above is uniformly bounded with respect to $t, s$, as shown by
\begin{align*}
&{d\over dx} (\int_{\frac{b}{a} x}^{\frac{b-c}{a}x+c} f_y'(x, y) \, e^{-2\pi i ty} \, dy) 
= \frac{b-c}{a} f_y'(x, \frac{b-c}{a}x+c) \, e^{-2\pi i t(\frac{b-c}{a}x+c)} \\
& \ \ - \frac{b}{a} f_y'(x, \frac{b}{a} x) \, e^{-2\pi i t \frac{b}{a} x}
+ \int_{\frac{b}{a} x}^{\frac{b-c}{a}x+c} f_{yx}''(x, y) \, e^{-2\pi i ty} \, dy.
\end{align*}

The previous computation shows that, for a finite constant $K$, if $|t|\geq 1$, $ |t(b-c)+ sa| \geq 1$, $|tb+ sa| \geq 1$ and  $|s|\geq 1 $, then 
\begin{eqnarray*}
|c_f(s,t)| &\leq& {K\over |t|_+ |t(b-c)+ sa|_+} + {K\over |t|_+ |tb+ sa|_+} + {K\over |t|_+ |s|_+}.
\end{eqnarray*}
If $t\not=0$, since the integrals
$|\int_0^a A_t(x) e^{-2\pi i sx} \, dx|$, $|\int_0^a B_t(x) e^{-2\pi i sx} \, dx|$, $|\int_0^a C_t(x) e^{-2\pi i sx} \, dx|$ are bounded by some constant depending only on $f$, 
the above inequality holds even when  $ |t(b-c)+ sa| \leq 1$ or $|tb+ sa| \leq 1$ or  $|s|\leq 1 $.

Likewise, if $t = 0$ and $s \not = 0$, then, for a constant $K$, $\displaystyle |c_f(s,0)| \leq {K \over |s|_+}$. \eop

\subsection {The triangle $\Delta_0 = \{(x, y) \in [0, 1]^2: x < y \}$}

\

Since $\{x\} = x + 1$ for $x \in ]-1, 0[$, the characteristic function of $\Delta_0 = \Delta(1,1,1)$ reads 
\begin{eqnarray}
1_{\Delta_0}(x, y) = \{x - y\} + \{y\} - \{x\}, \, (x, y) \in  [0, 1]^2. \label{tri0}
\end{eqnarray}
For this special triangle a bound for the ergodic sums generated by $\varphi := 1_{\Delta_0} - \frac12$ can be obtained 
as in the proof of Theorem \ref{thm:recurrence} or by a simple method based on (\ref{tri0}): 
\begin{proposition} \label{triang} 
1) If the partial quotients of $\alpha_1$, $\alpha_2$ and $\alpha_1 - \alpha_2$ satisfy (\ref{majan})  for some $s> 0$
(a condition which holds for a.e. $\alpha = (\alpha_1, \alpha_2)$), there is a constant $C_s$ such that
\begin{eqnarray}
\|\sum_{k=0}^{n-1} \varphi(. + k\alpha_1, . + k\alpha_2)\|_\infty \leq C_s (\log n)^{1+s}, \, \forall n \geq 2. \label{Delta01}
\end{eqnarray}
2) If $\alpha_1$, $\alpha_2$ are algebraic, then for every $\varepsilon > 0$, there is a constant $C(\varepsilon)$ such that
\begin{eqnarray}
\|\sum_{k=0}^{n-1} \varphi(. + k\alpha_1, . + k\alpha_2)\|_\infty \leq C(\varepsilon) \, n^{\varepsilon}, \, \forall n \geq 2. \label{Delta02}
\end{eqnarray}
\end{proposition}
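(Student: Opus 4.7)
The plan is to exploit the algebraic identity (\ref{tri0}) to reduce the two-dimensional problem to three one-dimensional Koksma-type estimates. Writing $\psi(u) := \{u\} - \frac{1}{2}$ for the centered sawtooth on $\T$, identity (\ref{tri0}) gives
\[
\varphi(x,y) = 1_{\Delta_0}(x,y) - \tfrac{1}{2} = \psi(x-y) + \psi(y) - \psi(x), \quad (x,y)\in[0,1]^2.
\]
Consequently, the ergodic sum for the $\T^2$-rotation by $(\alpha_1, \alpha_2)$ decomposes as
\[
\sum_{k=0}^{n-1}\varphi(x+k\alpha_1,\, y+k\alpha_2) = S_n^{\alpha_1-\alpha_2}\psi(x-y) + S_n^{\alpha_2}\psi(y) - S_n^{\alpha_1}\psi(x),
\]
where $S_n^{\beta}\psi(t) := \sum_{k=0}^{n-1}\psi(t+k\beta)$ denotes the one-dimensional ergodic sum under rotation by $\beta$.

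Next I would apply Proposition \ref{bnd2} to each of the three summands. Since $\psi$ is centered and of bounded variation on $\T$, Proposition \ref{bnd2} yields a uniform bound on $\|S_n^\beta \psi\|_\infty$ whenever $\beta$ satisfies the relevant Diophantine condition. For part 1), the hypothesis is precisely that each of $\alpha_1$, $\alpha_2$, $\alpha_1-\alpha_2$ satisfies (\ref{majan}) for the same $s$, so (\ref{majergo0}) gives $\|S_n^\beta \psi\|_\infty \leq K_s(\log n)^{1+s}$ for $\beta \in \{\alpha_1, \alpha_2, \alpha_1-\alpha_2\}$, and the triangle inequality produces (\ref{Delta01}) with $C_s = 3K_s$.

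For part 2), I would invoke Roth's theorem: every algebraic irrational is of type 1. Given $\alpha_1,\alpha_2$ algebraic with $(\alpha_1,\alpha_2)$ totally irrational, the number $\alpha_1-\alpha_2$ is algebraic and irrational (total irrationality forbids $\alpha_1-\alpha_2 \in \Q$), hence is also of type 1. Applying (\ref{majergo2}) of Proposition \ref{bnd2} to each of the three sums then yields $\|S_n^\beta\psi\|_\infty \leq K(\varepsilon)n^\varepsilon$, and summing gives (\ref{Delta02}). The argument is essentially mechanical once the decomposition is in place; the only mildly subtle point is observing that total irrationality of $\alpha$ is exactly what is needed to guarantee $\alpha_1-\alpha_2$ is irrational so that Roth's theorem applies.
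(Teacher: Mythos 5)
Your proposal is correct and follows essentially the same route as the paper: decompose $\varphi = 1_{\Delta_0}-\tfrac12$ via (\ref{tri0}) into the three sawtooth sums along $\alpha_1-\alpha_2$, $\alpha_2$, $\alpha_1$, then apply Proposition \ref{bnd2} to each, using Roth's theorem for part 2). The only (harmless) extra detail you supply is the explicit remark that total irrationality guarantees $\alpha_1-\alpha_2\notin\Q$, which the paper leaves implicit.
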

\proof 1) Putting $\psi(x) = \{x\} - \frac12$, the ergodic sums of $1_{\Delta_0} -\frac12$ are
$$\sum_{k=0}^{n-1} \psi(x-y + k(\alpha_1 - \alpha_2)) + \sum_{k=0}^{n-1} \psi(y + k \alpha_2) - \sum_{k=0}^{n-1} \psi(x + k \alpha_1).$$
If $\alpha_1$, $\alpha_2$ and $\alpha_1 - \alpha_2$ are in the set $D_s$ of irrational numbers satisfying (\ref{majan}) for some $s \geq 0$, 
then by Proposition \ref{bnd2} there is a constant $C> 0$ such that, $\forall n \geq 2$,
\begin{eqnarray*}
&&\|\sum_{k=0}^{n-1} \psi(. + k(\alpha_1 - \alpha_2))\|_\infty \leq C (\log n)^{1+s}, \, \|\sum_{k=0}^{n-1} \psi(. + k\alpha_i)\|_\infty \leq C (\log n)^{1+s}, i=1,2. 
\end{eqnarray*}
By (\ref{tri0}) of $1_{\Delta_0}$, we obtain the same bound for $\varphi$: there is $C_1$ such that (\ref{Delta01}) is satisfied.

The set $D_s$ has full measure. 
The set $D_{2,s}$ of pairs $(\alpha_1, \alpha_2)$ such that $\alpha_1$, $\alpha_2$ and $\alpha_1 - \alpha_2$ are in $D_s$ is a set of full measure in $\R^2$.
This is because $D_{2,s} = (D_s \times D_s) \cap \{(\alpha_1, \alpha_2): \alpha_1 \in D_s+\alpha_2\}$ and by Fubini the second set in the intersection has full measure.

2) If $\alpha_1, \alpha_2$ are algebraic, since $\alpha_1 - \alpha_2$ is also algebraic, by Roth's theorem, $\alpha_1, \alpha_2, \alpha_1 - \alpha_2$ are of type 1.
By Proposition \ref{bnd2}, for every $\varepsilon > 0$ there is a constant $C(\varepsilon) > 0$ such that (\ref{Delta02}) is satisfied. \eop

The triangle $\Delta_0$ will be considered again in Section \ref{Delta0}.

\section{\bf Examples of ergodic cocycles over rotations on $\T^2$} \label{ergo}

There are relatively few known examples of ergodic cocycles over a 2-dimensional rotation. Let us mention some of them:
\hb - In \cite{La94} it is shown that for $\varphi:(x,y)\in\T^2\fff \{x\}\sin2\pi y\in \R$, for uncountably many $\alpha_1$, there are uncountably many $\alpha_2$
 such that the cocycle $(\varphi_n)$ over the rotation by $(\alpha_1, \alpha_2)$  is ergodic.
\hb - Let $T_\alpha$, $\alpha =(\alpha_1, \alpha_2)$, be an ergodic rotation on $\T^2$ with $\alpha_1, \alpha_2 \in \rm{Bad}$. Let $\varphi$ be a function on $\T^2$
of the form $\varphi(x,y) = (\varphi_1(x), \varphi_2(y))$ with $\varphi_i:\T\to\Z$, $i=1,2$, centered step functions with rational discontinuities.
In \cite{CoFr11} it is shown that the $\Z^2$-cocycle $(\varphi_n)$ over $T_\alpha$ is ergodic if the jumps of $(\varphi_1, 0)$ and $(0, \varphi_2)$ generate $\Z^2$.
\hb  - In \cite{AbWu25} the ergodicity of some cocycles over rotations on $\T^2$ is shown for a class of examples quite different from those we consider here. 
The results are for rotations by $\alpha=(\alpha_1,\alpha_2)$ of Liouville type rather than badly approximable and for cocycles generated by indicators of some rectangles.

In this section, under Diophantine conditions, we prove ergodicity for two families of 1-dimensional and 2-dimensional cocycles generated 
over some rotations $T_\alpha$ on $\T^2$ by functions with (locally) non zero derivatives.

{\it Once for all, we suppose $\alpha= (\alpha_1, \alpha_2)$ totally irrational (i.e., $1, \alpha_1, \alpha_2$ linearly independent over $\Q$), 
a necessary and sufficient condition for the ergodicity of the rotation $T_\alpha$ on $\T^2$.}

\subsection{Class ${\cal F}_1$} \label{sec:F}

\ 

First we define on $\T^2$ a class ${\cal F}_1$ of $\R$-valued functions with discontinuities, but with local regularity. Then, after preliminary results,
we prove a result of ergodicity (Theorem \ref{thm:ergodicity1}).
\begin{definition} {\rm ${\cal F}_1$ is the class of centered functions $\varphi$ on $\T^2$ such that, for two finite sets depending on $\varphi$:
$J_i= \{\beta^i_0=0 \leq \beta^i_1 < ... < \beta^i_{r_i-1} \leq  \beta^i_{r_i}= 1\}$, with $r_i \geq 1$, $i=1,2$, the partial derivative
$\frac{\partial \varphi}{\partial x_1}$ exists on  the open rectangles
$\displaystyle P_{j, j'} = ]\beta^1_j, \beta^1_{j+1}[ \, \times \, ]\beta^2_{j'}, \beta^2_{j'+1}[, \, j=0,..., r_1-1, j'=0, ..., r_2-1$.
Moreover, we suppose that $\varphi$ and $\frac{\partial \varphi}{\partial x_1}$ are continuous on each $P_{j, j'}$ and can be continuously extended to its closure.}
\end{definition}
$\cal P_{\varphi}$ will denote the partition of the unit square
\footnote{In what follows, we will call ``partition of the unit square'' any finite collection of disjoint subsets of the unit square which covers it up to a Lebesgue negligible set.}
 into the rectangles $P_{j,j'}$.

Observe that, if $\varphi \in {\cal F}_1$, the limits \footnote
{It is understood that $\varphi(0_-, x_2):= \lim_{x_1 \to 1, \, x_1 < 1} \varphi(x_1,x_2)$ and $\varphi(1_+, x_2):= \lim_{x_1 \to 0, \, x_1 >0} \varphi(x_1,x_2)$.} 
$$\varphi(\beta_-, x_2):= \lim_{x_1 \to \beta, \, x_1 < \beta} \varphi(x_1,x_2), \ \varphi(\beta_+, x_2):= \lim_{x_1 \to \beta, \, x_1 > \beta} \varphi(x_1,x_2)$$ 
exist and are finite for every $\beta \in J_1$ and every $x_2 \not \in J_2$.

For every $n \geq 1$, if we write each set of numbers $(\{\beta^i_j - \ell \alpha_i\}, j \in J_i, \, 0 \leq \ell < n)$, $i=1,2$, as an ordered set of distinct points 
$(\gamma_{n,\ell}^i)_{\ell= 1,\ldots, p_{i,n}}$, with $p_{i,n} = r_i n$, then the atoms of 
$${\cal P}_{\varphi}^{n} := \cal P_{\varphi} \wedge T_\alpha^{-1}{\cal P_{\varphi}} \wedge ... \wedge T_\alpha^{-(n-1)}{\cal P_{\varphi}}$$
are the rectangles 
\begin{eqnarray}
R_{\ell, \ell'}^n = ]\gamma_{n,\ell}^1, \gamma_{n,\ell+1}^1[ \, \times \, ]\gamma_{n,\ell'}^2, \gamma_{n,\ell'+1}^2[. \label{Rll}
\end{eqnarray}
On each atom of ${\cal P}_{\varphi}^{n}$, $\varphi_n$ is continuous, the partial derivative $\frac{\partial \varphi_n}{\partial x_1}$ exists and can be extended to its closure.
\begin{exs} {\rm a) Let $\{0 \leq \beta^i_1 < ... < \beta^i_{r_1-1} \leq 1\}$, $i=1,2$, be two finite sequences in $[0,1]$, $v_j$ continuous functions, 
$\gamma_{j,j'}$ coefficients. Then the sum
$\varphi(x_1,x_2) = \sum_{j,j'} \gamma_{j,j'} [\{x_1- \beta^1_j\} \, v_{j'}(\{x_2 - \beta^2_{j'}\}) - \frac12 \int_0^1 v_j dx_2]$ is in $\cal F$. 

b) Let us taking a finite partition of the unit square into open rectangles $P_{j, j'}$ and a family $\varphi_{j, j'}$ 
such that each $\varphi_{j, j'}$ is defined and  $C^1$ on an open set containing the closure of $P_{j, j'}$. The function $\varphi$ defined by 
$\varphi \, |P_{j, j'} = \varphi_{j, j'} \, |P_{j, j'}$ (and arbitrarily on the negligible complement $[0,1] \times [0,1] \setminus \cup_{j,j'} P_{j, j'}$) is then in ${\cal F}_1$.
} \end{exs}
 
{\bf Hypothesis $H_1$ on $\alpha$ and the discontinuities $\beta^1_j $}:
$$\beta^1_j - \beta^1_{j'} \in \operatorname{Bad}_{\Z}(\alpha_1), \, \forall j, j' \in \{1, \dots, r_1\}.$$

\begin{rem} \label{rkH}
a) In particular, $0\in \operatorname{Bad}_{\Z}(\alpha_1)$, meaning that $\alpha_1 \in \rm{Bad}$. 
Recall that once for all $\alpha= (\alpha_1, \alpha_2)$ is assumed to be totally irrational.

b) There is no condition on $\beta_j^2$. It will be shown in Subsection \ref{Badly} that
\hb $\bullet$ given $\alpha_1 \in \rm{Bad}$, the set of $(\beta_j^1, \, j= 1, ..., r_1)$ satisfying condition $H_1$ has Hausdorff dimension $r_1$ in $\R^{r_1}$;
\hb $\bullet$ given $\beta^1_j$, $j=1,\dots,r_1$, the set of $\alpha$ such that $H_1$ holds has Hausdorff dimension 2.
\end{rem}

\begin{lem}\label{lem:bad} Under hypothesis $H_1$, there exist two constants $0<c\leq c'$ such that
\begin{equation}\label{defcs0}
{\frac{c}{n}} \leq \gamma_{n, \ell+1}^1- \gamma_{n,\ell}^1 \leq \frac{c'}{ n}, \forall n\geq 1, \ \ell= 1,\ldots, p_{1,n}.
\end{equation}
\end{lem}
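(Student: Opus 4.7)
The two inequalities in \eqref{defcs0} will be established separately. The key preliminary observation is that hypothesis $H_1$, specialized to $j = j'$, yields $0 \in \operatorname{Bad}_{\Z}(\alpha_1)$, which by Remark \ref{rkH}.a) is equivalent to $\alpha_1 \in \mathrm{Bad}$; in particular $\alpha_1$ has bounded partial quotients, say $a_m \le A$ for all $m$.

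\emph{Lower bound.} Any two distinct points $\{\beta^1_j - \ell\alpha_1\}$ and $\{\beta^1_{j'} - \ell'\alpha_1\}$, with $0\le \ell, \ell' < n$ and $(j,\ell) \neq (j',\ell')$, are at circular distance $\|(\beta^1_j - \beta^1_{j'}) - (\ell' - \ell)\alpha_1\|$. A short case analysis handles the three possibilities. If $j = j'$ (whence $\ell \neq \ell'$), the distance is bounded below by $c_0/|\ell'-\ell| \ge c_0/n$ by $\alpha_1 \in \mathrm{Bad}$. If $j \neq j'$ and $\ell = \ell'$, the distance is the fixed positive constant $\|\beta^1_j - \beta^1_{j'}\|$, hence $\ge \delta/n$ for some $\delta>0$. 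If $j \neq j'$ and $\ell \neq \ell'$, hypothesis $H_1$ applied to the pair $(j,j')$ provides $c(j,j') > 0$ with distance $\ge c(j,j')/|\ell'-\ell| \ge c(j,j')/n$. Taking $c$ to be the minimum of these finitely many strictly positive quantities yields a uniform lower bound $c/n$ on the spacing of distinct points, and \emph{a fortiori} on that of consecutive ones.

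\emph{Upper bound.} The family $\{\gamma^1_{n,\ell}\}$ contains the single-index orbit $O_n := \{\beta^1_1 - \ell\alpha_1 : 0 \le \ell < n\}$, so the largest gap in $\{\gamma^1_{n,\ell}\}$ is at most the largest gap in $O_n$. Since $O_n$ is a rigid translate of $\{-\ell\alpha_1\}_{0\le \ell < n}$, the three-distance (Steinhaus) theorem says its gaps take at most three distinct values, each expressible in terms of $\|q_{k-1}\alpha_1\|$ and $\|q_k\alpha_1\|$, where $q_k \le n < q_{k+1}$ are consecutive convergent denominators of $\alpha_1$. The recursion $q_{m+1} = a_{m+1}q_m + q_{m-1}$ together with $a_m \le A$ gives $q_{k+1} \le (A+1)q_k$, hence $q_k > n/(A+1)$, and each gap is bounded by $\|q_{k-1}\alpha_1\| \le 1/q_k \le (A+1)/n$. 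This provides the desired constant $c'$.

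The only mildly delicate point is the lower-bound case analysis; the quantified definition of $\operatorname{Bad}_{\Z}(\alpha_1)$ is tailor-made to handle simultaneously all mixed configurations of indices $(j,j')$ and time shifts $(\ell,\ell')$, and it is precisely for this combined control that hypothesis $H_1$ is imposed on every pairwise difference $\beta^1_j - \beta^1_{j'}$ rather than on the $\beta^1_j$ individually.
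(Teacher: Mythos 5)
Your proof is correct and follows essentially the same route as the paper's: the lower bound rests on the same case analysis of pairs $(j,\ell)$, $(j',\ell')$ using $H_1$ together with the minimal separation $\delta$ of the $\beta^1_j$, and the upper bound reduces to the largest gap of a single orbit $\{\ell\alpha_1\}_{0\le \ell<n}$ being $O(1/n)$ for $\alpha_1\in\mathrm{Bad}$ (the paper derives this gap bound directly from the convergents, where you invoke the three-distance theorem — a cosmetic difference). One small nitpick: for $q_k\le n<q_{k+1}$ the largest gap can equal $\|q_{k-1}\alpha_1\|+\|q_k\alpha_1\|$ rather than $\|q_{k-1}\alpha_1\|$, but this only costs a factor $2$ in $c'$.
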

\begin{proof} By $H_1$, there is a constant $c>0$ such that for all $j,j'\in\{1,\dots,r_1\}$ and all $k\in\{1,\dots,n\}$,	
$\|k\alpha_1-(\beta_j^1-\beta_{j'}^1)\|\geq \frac{c}{n}$.
Therefore, for all $1\leq j,j'\leq r_1$ and all $0\leq k,k'\leq n$, if $\beta_j+k\alpha_1\neq \beta_{j'}+k'\alpha_1 \mod \Z$, 
then $\|\beta_j+k\alpha_1-(\beta_{j'}+k'\alpha_1)\|\geq \min(\frac{c}{n},\delta)\geq \frac{\min(c,\delta)}{n}$,
with $\delta=\min_{j\neq j'}\|\beta_j-\beta_{j'}\|$.

For the right hand side of (\ref{defcs0}), it is enough to show that, for all positive integers $q$, 
the largest gap in $\T^1\setminus\{0,\dots,q\alpha_1\}$ is at most $\tfrac{c'}{q}$ for some constant $c'$. 
First with $j=j'$, we obtain that $\|n\alpha_1\|\geq \frac{c}{n}$, for all $n\geq 1$. 
Next, let $(\tfrac{p_n}{q_n})_{n\geq 0}$ be the sequence of convergents of $\alpha_1$. 
Since for all $n\in\N$, $q_{n+1}|q_n\alpha_1-p_n|=q_{n+1}\|q_n\alpha_1\|\leq 1$ and $q_n\|q_n\alpha_1\|\geq c$, we have 
$\tfrac{q_{n+1}}{q_n}\leq \tfrac1c$ and $|k\tfrac{p_n}{q_n}-k\alpha_1|\leq \tfrac1{q_{n+1}}$ for all $0\leq k\leq q_n$. 
This implies that the largest gap in $\T^1\setminus\{0,\dots,q_n\alpha_1\}$ is at most $\tfrac1{q_n}+\tfrac2{q_{n+1}}\leq \tfrac3{q_n}$. 
Hence, for all $q_n\leq q \leq q_{n+1}$, the largest gap in $\T^1\setminus\{0,\dots,q\alpha_1\}$ is at most $\tfrac3{q_n}\leq \tfrac{3}{cq}$.
\end{proof}

{\bf Variation of the ergodic sums of $\varphi \in {\cal F}_1$.}

We associate with $\varphi \in {\cal F}_1$ the following quantity  
$\displaystyle \lambda_1(\varphi) := \int_{\T^2} {\partial \varphi \over \partial x_1} \, dx$,
which reads
\begin{eqnarray*}
&&\sum_{j, j'} \int_{P_{j,j'}} {\partial \varphi \over \partial x_1}  \, dx
= \sum_{j=0,...,r_1-1, j'=0,...,r_2-1} \int_{\beta^2_{j'}}^{\beta^2_{j'+1}} \, [\varphi((\beta^1_{j+1})_-, x_2) - \varphi ((\beta^1_j )_+, x_2)] \, dx_2\\
&&= \sum_{j=0,...,r_1-1} \int_0^1 \, [\varphi((\beta^1_{j+1})_-, x_2) - \varphi ((\beta^1_j )_+, x_2)] \, dx_2.
\end{eqnarray*}
For instance, if $\varphi^1(x_1,x_2) = \{x_1\} \, \{x_2 \} - \frac14$, we have $r_1=r_2=1$, $\varphi^1(1_-, x_2) =\{x_2\} -\frac 14$,
$\varphi^1(0_+, x_2) = -\frac 14$, and 
$\lambda_1(\varphi^1) =  \int_0^1 [\varphi^1(1_-, x_2) - \varphi^1(0_+, x_2)] \, dx_2 = \frac 12$.

\begin{lem}\label{lem:var} Let $\varphi$ be in $\cal F_1$.  Suppose that $\lambda_1(\varphi)\neq 0$. Then, there exists $N\in\N$ such that for all integers $n\geq N$,
\hb$\bullet$ if $x$ is not in the boundary of a rectangle of the partition $\mathcal P_n$, then $\frac{\partial \varphi_n(x)}{\partial x_1}$ and $\lambda_1(\varphi)$ 
have the same sign, 
\hb$\bullet$ if $(x_1, x_2)$ and $(x_1+u_1, x_2)$ belong to interior of the same element of ${\cal P_n}$, then 
\begin{eqnarray*}
 \tfrac{1}{2}n|\lambda_1(\varphi) u_1|\leq |\varphi_n(x_1+u_1,x_2) -\varphi_n(x_1,x_2)| \leq 2n|\lambda_1(\varphi) u_1|. \label{dev}
\end{eqnarray*}
\end{lem}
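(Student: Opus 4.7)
The plan is to reduce both conclusions to a uniform equidistribution statement for the partial derivative $g := \partial\varphi/\partial x_1$. On the interior of an atom of $\mathcal P_n$, each translate $T_\alpha^k x$, $0 \leq k \leq n-1$, lies in the interior of a single rectangle $P_{j,j'}$ of $\mathcal P_\varphi$; therefore $\varphi_n$ is $C^1$ in $x_1$ there, with
\[
\frac{\partial \varphi_n}{\partial x_1}(x) \ =\  \sum_{k=0}^{n-1} g(x + k\alpha).
\]
Once I show that this sum, divided by $n$, is close to $\lambda_1(\varphi) = \int_{\T^2} g\,dx$ uniformly on the union of atom interiors, both the sign statement and the two-sided derivative bound will follow. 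The second inequality is then immediate by integrating along the horizontal segment from $(x_1,x_2)$ to $(x_1+u_1,x_2)$, which stays inside the same atom and on which $\partial \varphi_n/\partial x_1$ has constant sign.

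\textbf{Uniform control of the ergodic sum of $g$.} The function $g$ is bounded on $\T^2$, and its discontinuity set $D$ is contained in the finite union of horizontal and vertical segments $\{x_1 = \beta^1_j\} \cup \{x_2 = \beta^2_{j'}\}$, so $g$ is Riemann integrable and $\int g = \lambda_1(\varphi)$. For $\delta > 0$, letting $\phi_\delta$ be a continuous cutoff with $\phi_\delta \equiv 0$ on $D_{\delta/2}$, $\phi_\delta \equiv 1$ off $D_\delta$, and $M := \|g\|_\infty$, I define the continuous sandwich
\[
h^-_\delta \ :=\  g\phi_\delta - M(1-\phi_\delta), \qquad h^+_\delta \ :=\  g\phi_\delta + M(1-\phi_\delta),
\]
which satisfies $h^-_\delta \leq g \leq h^+_\delta$ on $\T^2$ and $\int (h^+_\delta - h^-_\delta) = 2M\int (1-\phi_\delta) \leq 2M\,|D_\delta| \to 0$ with $\delta$. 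Picking $\delta$ so that this integral is less than $|\lambda_1(\varphi)|/4$, uniform Weyl equidistribution for the continuous functions $h^\pm_\delta$ under the totally irrational rotation $T_\alpha$ yields an integer $N$ such that for all $n \geq N$ and all $x \in \T^2$,
\[
\Bigl|\frac{1}{n}\sum_{k=0}^{n-1} h^\pm_\delta(x+k\alpha) - \int h^\pm_\delta\Bigr| \ <\  \tfrac{1}{4}|\lambda_1(\varphi)|.
\]

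\textbf{Conclusion.} If $x$ lies in the interior of an atom of $\mathcal P_n$, then none of the iterates $x+k\alpha$, $0 \leq k < n$, hits $D$, so $h^-_\delta(x+k\alpha) \leq g(x+k\alpha) \leq h^+_\delta(x+k\alpha)$ for each $k$. Summing and using the previous step, together with $|\int h^\pm_\delta - \lambda_1(\varphi)| < |\lambda_1(\varphi)|/4$, I obtain
\[
\tfrac{1}{2}\lambda_1(\varphi) \ \leq\  \frac{1}{n}\,\frac{\partial \varphi_n}{\partial x_1}(x) \ \leq\  \tfrac{3}{2}\lambda_1(\varphi)
\]
when $\lambda_1(\varphi)>0$, with the analogous chain of reversed inequalities when $\lambda_1(\varphi)<0$. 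In both cases $\partial \varphi_n/\partial x_1$ has the sign of $\lambda_1(\varphi)$ and satisfies $\tfrac{1}{2}n|\lambda_1(\varphi)| \leq |\partial \varphi_n/\partial x_1(x)| \leq 2n|\lambda_1(\varphi)|$ uniformly on the atom interiors. Integrating along the segment from $x$ to $(x_1+u_1,x_2)$ (which remains in the same atom) gives the displayed two-sided bound for $|\varphi_n(x_1+u_1,x_2)-\varphi_n(x_1,x_2)|$.

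\textbf{Main obstacle.} The technical subtlety is promoting the Riemann integrability of $g$ into a \emph{uniform} (in the starting point) ergodic-sum estimate. Pointwise Weyl equidistribution for Riemann integrable observables is not automatically uniform; what saves the day is the continuous sandwich $h^\pm_\delta$, which allows the uniform Weyl theorem for continuous functions to be applied and then transferred back to $g$ precisely at the points that are protected by the atom structure of $\mathcal P_n$.
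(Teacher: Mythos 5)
Your proposal is correct and follows essentially the same route as the paper: reduce to the uniform convergence of the Birkhoff averages of $\partial\varphi/\partial x_1$ under the uniquely ergodic rotation (the paper invokes this directly for Riemann integrable observables, while you supply the continuous sandwich $h_\delta^{\pm}$ that justifies it), deduce the sign and two-sided derivative bounds, and integrate along the horizontal segment inside the atom. The only difference is that you make explicit the standard step the paper treats as known.
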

\proof \ We can assume $\lambda_1(\varphi)>0$ w.l.g. 
Since the rotation $T_{\alpha}$ is uniquely ergodic and since $\frac{\partial \varphi}{\partial x_1}$ is Riemann integrable, 
the sequence of ergodic sums $(\displaystyle \frac{1}{n}\sum_{k=0}^{n-1}\frac{\partial\varphi}{\partial x_1}\circ T_{\alpha}^k, n \geq 1)$ converges uniformly 
to $\displaystyle \int_{\T^2}\frac{\partial\varphi}{\partial x_1}\,dx =\lambda_1(\varphi)>0$.
It follows that there exists an integer $N$ such that for every $n\geq N$ and every $x\in \T^2$ not in the boundary of a rectangle in $\mathcal P_n$, 
\begin{eqnarray}
\label{derivee}&&\frac12\lambda_1(\varphi)\leq\frac{1}{n}\sum_{k=0}^{n-1}\frac{\partial\varphi}{\partial x_1}( T_{\alpha}^k(x))\leq 2\lambda_1(\varphi).
\end{eqnarray}
It follows that  $\displaystyle \frac{\partial \varphi_n(x)}{\partial x_1}$ and $\lambda_1(\varphi)$ have the same sign.
  
To prove the second item, we can assume $u_1>0$. 
By the hypothesis on $(x_1,x_2)$ and $(x_1+u_1, x_2)$, for each $0\leq k<n$, their images $T_\alpha^k (x_1,x_2), T_\alpha^k (x_1+u_1, x_2)$, 
 belong to the interior of the same rectangle $P_{j, j'}$ for some $j,j'$. By definition of the class $\mathcal F$, the derivative $\frac{\partial\varphi}{\partial x_1}$ 
 exists on each segment $[T_{\alpha}^k(x_1,x_2),T_{\alpha}^k(x_1+u_1,x_2)]$, hence 
\begin{eqnarray*}
\varphi_n(x_1+u_1,x_2)=\varphi_n(x_1,x_2)+\int_{x_1}^{x_1+u_1}\sum_{k=0}^{n-1}\frac{\partial\varphi}{\partial x_1}(T_{\alpha}^k(x_1+t,x_2))\,dt.
\end{eqnarray*}
Now the second item of the lemma follows, since by (\ref{derivee}),
\begin{eqnarray*}
\frac12u_1\lambda_1(\varphi)\leq\int_{x_1}^{x_1+u_1}\frac1n\sum_{k=0}^{n-1}\frac{\partial\varphi}{\partial x_1}(T_{\alpha}^k(x_1+t,x_2))\,dt 
\leq 2u_1\lambda_1(\varphi). \eop
\end{eqnarray*}
 
\subsection{Ergodicity of $(\varphi_n)$ for $\varphi \in {\cal F}_1$}

\

\begin{thm}\label{thm:ergodicity1} Let $\varphi$ be a function in ${\cal F}_1$ such that $\lambda_1(\varphi)\not = 0$.
Let $\alpha = (\alpha_1, \alpha_2)$ be totally irrational. Suppose that the hypothesis $H_1$ for the discontinuities of $\varphi$ is satisfied. 
Then the $\R$-valued cocycle $(\varphi_{n, \alpha})$ over the rotation $T_\alpha$ on $\T^2$ is ergodic.
\end{thm}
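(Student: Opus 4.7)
The plan is to show that $\cal E(\varphi)$, a closed subgroup of $\R$, contains a nondegenerate interval around $0$; since a closed subgroup of $\R$ with an interior point must equal $\R$, this together with recurrence of $(\varphi_{n,\alpha})$ gives ergodicity by the criterion recalled in \S\ref{recEss}. Write $\lambda_1:=\lambda_1(\varphi)$, let $c$ be the constant of \lemref{lem:bad}, and fix $a\in\R$ with $|a|\le c|\lambda_1|/4$, a small $\epsilon>0$, and a measurable $B\subset\T^2$ with $\mu(B)>0$. It suffices to produce $n\in\N$ and $A\subset B$ of positive measure with $T_\alpha^n A\subset B$ and $\varphi_n(A)\subset(a-\epsilon,a+\epsilon)$.

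The key ingredient is a sliding trick based on \lemref{lem:var}. Refining its proof via the uniform Birkhoff convergence of $n^{-1}\partial_{x_1}\varphi_n$ to $\lambda_1$, we strengthen the conclusion to: for every $\delta>0$ there is $N(\delta)$ such that, for $n\ge N(\delta)$ and $(x_1,x_2),(x_1+u_1,x_2)$ in the interior of the same atom of $\cal P_\varphi^n$,
\[
|\varphi_n(x_1+u_1,x_2)-\varphi_n(x_1,x_2)-n\lambda_1 u_1|\le \delta\, n|u_1|.
\]
Setting $u_1:=a/(n\lambda_1)$, the bound on $|a|$ and \lemref{lem:bad} give $|u_1|\le c/(4n)$, at most a quarter of the $x_1$-width of any atom. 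Hence the horizontal translation $\sigma(x_1,x_2):=(x_1+u_1,x_2)$ sends each atom into itself outside an $x_1$-edge strip of relative width at most $1/4$; and for $x$ with $|\varphi_n(x)|<\epsilon/3$ and $\delta$ chosen small, the refined estimate yields $\varphi_n(\sigma(x))\in(a-\epsilon,a+\epsilon)$.

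To exploit this, pick a Lebesgue density point $x_0\in B$ and squares $Q_{r_n}(x_0)$ with $r_n\to 0$ at a rate ensuring $Q_{r_n}(x_0)$ lies inside the atom of $\cal P_\varphi^n$ containing $x_0$. Combining recurrence of $(\varphi_{n,\alpha})$ (established for $\varphi\in\cal F_1$ under $H_1$ by a Fourier-analytic argument in the spirit of \thmref{thm:recurrence}) with simultaneous Dirichlet approximation to $\alpha\in\R^2$, one finds arbitrarily large $n$ with $\|n\alpha\|_\infty\ll r_n$ and $\mu(E)>0$, where $E:=Q_{r_n}(x_0)\cap B\cap T_\alpha^{-n}B\cap\{|\varphi_n|<\epsilon/3\}$. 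Define $A:=\sigma(E)\cap B\cap T_\alpha^{-n}B$. Since $\sigma$ commutes with $T_\alpha$ and Lebesgue measure on $\T^2$ is translation-continuous (so $\mu(B\triangle\sigma(B))\to 0$ as $u_1\to 0$), one has $\mu(A)\ge\mu(E)/2>0$ for $n$ large, and by the sliding estimate $\varphi_n(A)\subset(a-\epsilon,a+\epsilon)$. This proves $a\in\cal E(\varphi)$.

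The main obstacle is synchronising on a single integer $n$ the Diophantine smallness of $\|n\alpha\|_\infty$ (so that $B$ and $T_\alpha^{-n}B$ overlap substantially on $Q_{r_n}(x_0)$) and the cocycle smallness $|\varphi_n|<\epsilon/3$ on a positive-measure portion of $Q_{r_n}(x_0)\cap B$. The hypothesis $H_1$ is crucial here: through \lemref{lem:bad}, it provides the geometric rigidity of $\cal P_\varphi^n$ needed to establish recurrence along Dirichlet-good times, and hence makes recurrence compatible with the Diophantine selection. The non-degeneracy assumption $\lambda_1(\varphi)\neq 0$ is equally indispensable, as it is precisely what lets the sliding trick produce a full interval of essential values rather than only a discrete set.
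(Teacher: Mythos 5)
Your overall strategy (exhibit an interval of essential values by moving in the $x_1$-direction inside an atom of $\cal P_\varphi^n$ and using the derivative estimate of \lemref{lem:var}) is the right one, and your ``sliding'' variant of the paper's sweeping argument would be fine locally. But there are two genuine gaps where the proposal would fail. First, the synchronisation you yourself flag as ``the main obstacle'' is not resolved: you need a single $n$ at which (i) $\|n\alpha\|_\infty\ll r_n$ so that $T_\alpha^{-n}B$ nearly fills $Q_{r_n}(x_0)$, and (ii) $|\varphi_n|<\epsilon/3$ on a positive-measure part of $Q_{r_n}(x_0)\cap B$. Recurrence (Remark~\ref{rem1}.a) produces \emph{some} $n$ with $\mu(B_\varepsilon\cap T^{-n}B_\varepsilon\cap(|\varphi_n|<\epsilon))>0$, with no control on which $n$; forcing these $n$ to be Dirichlet times is an entirely different (and unproved) statement, and $H_1$ does not supply it. The paper sidesteps this by never asking for $\|n\alpha\|$ to be small: it applies the density theorem simultaneously to the partitions $\cal U_n$ and to their images $\tilde{\cal U}_n=T^n\cal U_n$, and uses the exact identity $T^{-n}\tilde R_n(T^nu)=R_n(u)$, so that $u\in B_\varepsilon\cap T^{-n}B_\varepsilon$ alone gives $\mu(B\cap T^{-n}B\cap R_n(u))\geq(1-2\varepsilon)\mu(R_n(u))$; recurrence then only has to produce such a $u$ with $|\varphi_n(u)|$ small, which is exactly what Remark~\ref{rem1}.a gives.

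Second, the standard Lebesgue density theorem with squares $Q_{r_n}(x_0)$ is not adequate here. Hypothesis $H_1$ controls only the $x_1$-gaps (width $\asymp 1/n$ by \lemref{lem:bad}); there is no Diophantine condition on $\alpha_2$ or the $\beta^2_j$, so the $x_2$-heights of the atoms of $\cal P_\varphi^n$ can be arbitrarily small compared with $1/n$. A square inside an atom therefore has side $r_n$ possibly much smaller than the slide $u_1\asymp 1/n$, so $\sigma(Q_{r_n}(x_0))$ is disjoint from $Q_{r_n}(x_0)$ and the density of $B$ at $x_0$ with respect to squares says nothing about $\mu(\sigma(E)\cap B)$; controlling it would require density along rectangles of unbounded eccentricity, which is false in general. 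Relatedly, your estimate $\mu(A)\geq\mu(E)/2$ from $\mu(B\triangle\sigma(B))\to0$ does not follow: both $\mu(E_n)$ and the translation defect tend to $0$ and nothing compares their rates; all errors must be measured \emph{relative to} $\mu(R_n)$. The paper handles both points by proving an adapted density theorem (Theorem~\ref{densL}) for the anisotropic rectangles $I\times J$ with $|I|\asymp 1/n$ and $|J|\asymp e_n$, whose bounded-overlap condition (\ref{condi1bis}) is exactly what \lemref{lem:bad} guarantees. A minor further point: for $d=1$ recurrence is automatic for a centered integrable $\varphi$ by Atkinson's theorem, for \emph{every} ergodic rotation; routing it through a Fourier-analytic argument as in Theorem~\ref{thm:recurrence} is both unnecessary (functions of $\cal F_1$ need not be in $\cal G$) and would wrongly restrict the admissible $\alpha$ to the complement of an exceptional set.
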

\proof \ We can suppose $\lambda_1=\lambda_1(\varphi)>0$ w.l.g. Let $\theta_1, \theta_2$ be positive real numbers such that,
with $c$ defined in Lemma \ref{lem:bad},
\begin{eqnarray}
0 < \theta_1 < \theta_2 < \frac{c}{100} \,\lambda_1, \label{hypdelta0}
\end{eqnarray}

Let $B \subset \T^2$ be any measurable set of positive measure. We are going to show that there are infinitely many integers $n \in \N$ such that
\begin{eqnarray}
\mu(B \cap T_{\alpha}^{-n} B  \cap \{|\varphi_n| \in [\theta_1, \theta_2] \}) > 0. \label{thetaInt}
\end{eqnarray}
As $\theta_1, \theta_2$ are arbitrary in $]0, \frac{c}{100} \,\lambda_1[$, this will imply
that $\theta$ or $-\theta$ is an essential value of the cocycle $(\varphi_n)$ for every $\theta\in]0, \frac{c}{100} \,\lambda_1[$.
Since ${\mathcal E}(\varphi)$ is a closed subgroup of $\R$, this shows ergodicity. It remains to prove (\ref{thetaInt}).

{\it Proof of (\ref{thetaInt}).}

The proof is divided into two steps. 
The  first step aims to Inequality (\ref{muRn0}) below on density of subsets with respect to partitions $\cal U_n$ associated with $(\varphi_n)$.
The proof of (\ref{muRn0}) relies on a version of the Lebesgue density theorem adapted to the partitions $\cal U_n$. 
Thanks to recurrence, the second step combines Lemma \ref{lem:var} and (\ref{muRn0}).

{\it 1a) Definition of the partitions $\cal U_n$}

For $\varepsilon>0$, let $\omega(\varepsilon)$ be a modulus of (local) continuity for the function $\varphi$, i.e., if two points $(x_1,x_2)$ and $(y_1,y_2)$ of the torus $\T^2$ 
are in the same rectangle of the partition $\cal P_{\varphi}$ associated with $\varphi$ and if $\max_i |y_i-x_i| \leq \omega(\varepsilon)$, then $|\varphi(x_1,x_2)-\varphi(y_1,y_2)|\leq \varepsilon$.

Let $n\geq 1$. The partition $\cal U_n$ will be a refinement of the partition $\cal P_{\varphi}^n$. 
First, each of the sets $J^i=\{\beta^i_j-k\alpha_i:1\leq j\leq r_i,\, 0\leq k<n\}$, $i=1,2$, cuts $\T^1$ into a set  ${\cal I}^i_n$ of half-open intervals open on the right.  
Next, let $\delta_n=\min\{|I|: I \in {\cal I}^2_n\}$ and let $e_n=\min(\delta_n,\omega(\frac1{2n^2}))$. 
These quantities are non-increasing with $n$.

All the intervals in $\cal I^2_n$ have a length $\geq e_n$. We divide each $I\in {\cal I}^2_n$ into sub-intervals of length $\frac12e_n$, 
except the last sub-interval with a length between $\frac12 e_n$ and $e_n$.
 
We obtain a new set of non-overlapping intervals ${\cal J}^2_n$ such that
\begin{itemize}
\item every interval in $\cal J^2_n$ is included in an interval of $\cal I^2_n$,
\item every interval in $\cal I^2_n$ is  a union of intervals in $\cal J^2_n$,
\item the length of every interval of $\cal J^2_n$ is in $[{e_n \over 2}, e_n[$. 
\end{itemize}

In the sequel of the proof, we write simply $T$ for the rotation $T_\alpha$ on $\T^2$. We define 
$$\cal U_n=\{R := I\times J:I\in \cal I^1_n,\, J\in \cal J^2_n\}.$$
We will also use the partition $\tilde{\cal U}_n=\{\tilde R := T^n R: R \in \cal U_n\}$. We denote $R_n(x)$ (resp. $\tilde R_n(x)$) 
the rectangle of $\cal U_n$ (resp. $\tilde{\cal U}_n$) that contains $x \in\T^2$ (well defined for $x$ outside a set of 0 measure).

{\it 1b) Application of Lebesgue density theorem}

We want to use a Lebesgue density theorem  (see appendix, Theorem \ref{densL}) twice, once  with the partitions $\cal U_n$, $n\geq 1$, and once 
with the partitions $\tilde{\cal U}_n$, $n\geq 1$. We need to check that Conditions (\ref{condi1bis}) and (\ref{condi2}) of Theorem \ref{densL}) hold 
for both sequences of partitions. Condition (\ref{condi2}) about the diameters is clearly satisfied.

Next, let $L_n :=\max\{|I|:I\in\cal I^1_n\}$. The sequence $(L_n)$ is non-increasing and, by Lemma \ref{lem:bad}, 
$L_n \leq C\ell_n$, where $\ell_n=\min\{|I|:I\in\cal I^1_n\}$ and  $C=\frac{c'}{c}$. 
Let $I\times J\in \cal U_n$ or $\tilde{\cal U}_n$. On the one hand, $\mu(I\times J)\geq \ell_ne_n/2$. On the other hand, if a product of intervals, $I'\times J'$, 
with lengths respectively $\leq L_k$ and $\leq e_k$ for some $k\geq n$, intersects $I\times J$, then
$$I'\times J'\subset I''\times J''=(I+[-L_n, L_n])\times (J+[-e_n,e_n]).$$
Since $\mu(I''\times J'')\leq 3L_n\times 3e_n\leq 9C\mu(I\times J)$, (\ref{condi1bis}) holds.

Thanks to the Lebesgue density theorem  for the set $B$ with respect to the two families of rectangles ${\cal U}_n$ and $\tilde {\cal U}_n$,
we have:
$$\lim_{n \to \infty} {\mu(R_n(x) \cap B) \over \mu(R_n(x))}
= \lim_{n \to \infty} {\mu(\tilde R_n(x) \cap B) \over \mu(\tilde R_n(x))} = 1, \text {for a.e. } x \in B.$$

Therefore, for all $\varepsilon> 0$, there exist an integer $n_\varepsilon$ and a subset $B_\varepsilon$ of $B$ of positive measure such that, 
for all $u \in B_\varepsilon$ and all $n \geq n_\varepsilon$,
$${\mu(R_n(u) \cap B) \over \mu(R_n(u))} \geq 1 - \varepsilon, \ {\mu(\tilde R_n(u) \cap B) \over \mu(\tilde R_n(u))} \geq 1 - \varepsilon.$$

Let $n \geq n_\varepsilon$ and let $u$ be in $B_\varepsilon \cap T^{-n}B_\varepsilon$.

Since $u \in B_\varepsilon$, the first previous inequality implies
$$\mu(B \cap R_n(u)) \geq (1 - \varepsilon ) \mu(R_n(u)).$$
Since $T^n u \in B_\varepsilon$, the second inequality implies
$$\mu(B \cap \tilde R_n(T^n u)) \geq (1 - \varepsilon ) \mu(\tilde R_n(T^n u)).$$
Now, $T^{-n} \tilde R_n(T^n u) = R_n(u)$ and $T^{-n} (B \cap \tilde R_n(T^n u)) = T^{-n} B \cap R_n(u)$.
Hence
$$\mu(T^{-n} B \cap R_n(u)) \geq (1 - \varepsilon ) \mu(R_n(u)).$$
In brief, for every $\varepsilon >0$, there are $n_\varepsilon \geq 1$ and a set of positive measure $B_\varepsilon\subset B$ such that
\begin{eqnarray}
&&\mu(B \cap T^{-n} B \cap R_n(u)) \geq (1 - 2 \varepsilon ) \mu(R_n(u)),  
 \label{muRn0}
\end{eqnarray}
for all $n \geq n_\varepsilon $ and all $ u \in B_\varepsilon \cap T^{-n} B_\varepsilon$.
It follows, for $n \geq n_\varepsilon$,
\begin{eqnarray}
\mu((B \cap T^{-n} B )^c \cap R_n(u)) \leq 2 \varepsilon \mu(R_n(u)), \forall u \in B_\varepsilon \cap T^{-n} B_\varepsilon. \label{BBRn}
\end{eqnarray}

With $c'$ defined in Lemma \ref{lem:bad}, we take $\varepsilon$ such that 
\begin{eqnarray}
0 < \varepsilon <  \frac {\theta_2-\theta_1}{32c'\lambda_1}. \label{epsil}
\end{eqnarray}

{\it 2) Application of recurrence}

Let $N$ be the integer defined in Lemma  \ref{lem:var}.
As the function $\varphi$ is centered, the cocycle $(\varphi_n)$ is recurrent. Therefore, by Remark \ref{rem1}.a), 
there exists $n \geq \max(n_\varepsilon, N)$ and $a = (a_1, a_2) \in B_\varepsilon \cap T^{-n} B_\varepsilon$ such that $|\varphi_n(a)| <\frac12 \theta _1$.

Let $R_n(a) = [s_1, t_1] \times [s_2, t_2]\in \cal U_n$ be the rectangle of $\cal U_n$ that contains $a $. 
By Lemma \ref{lem:bad},  $\frac{c}{n}\leq t_1-s_1\leq \frac{c'}{n}$.

The real number $a_1$ is either in the first half of the interval $[s_1, t_1]$, or in the second half of this interval. Suppose that $a_1$ in the
first half of this interval (if it is in the second half, just move in the negative direction instead of the positive direction). 

Let $x_2\in[s_2,t_2]$ and consider the function $f_{x_2}:t\in[0,t_1-a_1[\rightarrow\varphi_n(a_1+t,x_2)$. 
We want to bound from below the length of the set of $t$ such that $f_{x_2}(t)\in[\theta_1,\theta_2]$.
By Lemma \ref{lem:var}, since $n\geq N$, for all $t\in[0,t_1-a_1]$ the derivative $f'_{x_2}(t)$ is positive and 
\begin{eqnarray*}
\tfrac{1}{2}n\lambda_1 t\leq f_{x_2}(t) -f_{x_2}(0) \leq 2n\lambda_1 t.
\end{eqnarray*}
By the definitions of $\cal J^2_n$, of $\omega(\frac1{2n^2})$ and $e_n$, we have 
$|f_{x_2}(0)|\leq |\varphi_n(a_1,a_2)|+n\times\frac1{n^2}\leq \tfrac12\theta_1+\tfrac1n$, so that $|f_{x_2}(0)|\leq \theta_1$ provided that $n\geq\frac2{\theta_1}$. 
Using (\ref{hypdelta0}), we also have 
\begin{eqnarray*}
&f_{x_2}(\tfrac12(t_1-s_1))\geq f_{x_2}(0)+\tfrac12n\lambda_1\times \tfrac12(t_1-s_1)
\geq -\theta_1+\tfrac14n\lambda_1\frac{c}{n}\geq -\theta_1+25\theta_2\geq \theta_2.
\end{eqnarray*}
It follows that $[\theta_1,\theta_2]\subset f_{x_2}([0,\frac12(t_1-s_1)]$ which in turn implies that
$$|\{t\in[0,\tfrac12(t_1-s_1)]:\varphi_n(a_1+t,x_2)\in[\theta_1,\theta_2]\}|
\geq \frac{\theta_2-\theta_1}{\max\{|f'_{x_2}(t):t\in[0, (t_1-a_1)]\}}\geq \frac{\theta_2-\theta_1}{2n\lambda_1}.$$ 
By Fubini's theorem, the set $\displaystyle A := \{(x_1, x_2) \in R_n(a) : \varphi_n(x_1,x_2)\in[\theta_1,\theta_2] \}$
has a measure $\displaystyle \mu(A) \geq  \frac{\theta_2-\theta_1} {2n\lambda_1}\times \frac{e_n }{2}$. This implies by (\ref{epsil}): 
\begin{eqnarray}
&&{\mu(A) \over \mu(R_n(a))} \geq {\frac{\theta_2-\theta_1} {2n\lambda_1}\times \frac{e_n }{2} \over \frac {c'} {n}\times e_n} 
= \frac {\theta_2-\theta_1}{4c'\lambda_1} \geq 4 \varepsilon. \label{muA}
\end{eqnarray}
By definition of $A$, $\varphi_n(A) \subset [\theta_1, \theta_2]$. 
As $A \subset R_n(a)$, it follows, using (\ref{BBRn}) and (\ref{muA}):
\begin{eqnarray*}
&&\mu(B \cap T^{-n} B \cap  \varphi_n^{-1} [\theta_1, \theta_2]) \geq \mu(B \cap T^{-n} B \cap A) \\
&&= \mu(A) - \mu((B \cap T^{-n} B )^c \cap A) \geq \mu(A) - \mu((B \cap T^{-n} B )^c \cap R_n(a)) \\
&&\geq \mu(A) - 2 \varepsilon \mu(R_n(a)) \geq 4 \varepsilon \mu(R_n(a)) - 2 \varepsilon \mu(R_n(a)) = 2 \varepsilon \mu(R_n(a))  > 0. 
\end{eqnarray*}
This shows (\ref{thetaInt}). \eop

\subsection{Class ${\cal F}_2$}

\

Now we consider a class ${\cal F}_2$ of functions from $\T^2$ to $\R^2$ whose components belong to the class ${\cal F}_1$,
but with a stronger regularity condition in both variables $x_1, x_2$.
\begin{definition} \label{defF2} {\rm ${\cal F}_2$ is the class of centered functions $\varphi = (\varphi^1, \varphi^2)$ on $\T^2$ such that, for two finite sets $J_1, J_2$ 
depending on $\varphi$: $J_i= \{\beta^i_0=0 \leq \beta^i_1 < ... < \beta^i_{r_i-1} \leq  \beta^i_{r_i}= 1\}, \,  i=1, 2$,
$\varphi$ is $C^1$ on the elements of the partition ${\cal P} = {\cal P}_\varphi$ of $[0, 1] \times [0, 1]$ into the open rectangles 
$\displaystyle P_{j, j'} = ]\beta^1_j, \beta^1_{j+1}[ \, \times \, ]\beta^2_{j'}, \beta^2_{j'+1}[, \, j=0,..., r_1-1, j'=0, ..., r_2-1$. 

Moreover, we assume that the partial derivatives of $\varphi^i$, $i=1, 2$, can be extended to continuous functions on the closure of the elements of ${\cal P}_\varphi$.}
\end{definition}
The following quantities are associated to a function $\varphi  =  (\varphi^1, \varphi^2) \in {\cal F}_2$: 
\begin{eqnarray*}
\lambda_1(\varphi^i) := \int_{\T^2} {\partial \varphi^i \over \partial x_1} \, dx, 
\ \lambda_2(\varphi^i) := \int_{\T^2} {\partial \varphi^i \over \partial x_2} \, dx, \ i=1,2.
\end{eqnarray*}
As in Lemma \ref{lem:var}, by unique ergodicity of the rotation and since ${\partial \varphi^i \over \partial x_1}$ and ${\partial \varphi^i \over \partial x_2}$ 
are Riemann integrable, we have uniformly for $x = (x_1, x_2) \in \T^2$:
\begin{eqnarray}
&&\lambda_j(\varphi^i) = \lim_n \frac1n \sum_{k=0}^{n-1}{\partial \varphi^i \over \partial x_j}(T^k(x_1,x_2)), \ i, j = 1, 2. \label{deriv1}
\end{eqnarray}

{\bf Hypothesis $H_2$ on $\alpha$ and the discontinuities $\beta^i_j $}:
$$\beta^i_j - \beta^i_{j'} \in \operatorname{Bad}_{\Z}(\alpha_i), \, \forall j ,j' \in \{1, \dots, r_i\},  \text{ for } i=1,2.$$ 
\begin{rem} 
In particular, $\alpha_1, \alpha_2$ are in Bad (cf. Remark \ref{rkH} a)). It will be shown in Subsection \ref{Badly} that 
\begin{itemize}
\item given $\alpha$, the set of $(\beta_j^i,  \, j= 1, ..., r_i, i=1,2)$ satisfying $(H_2)$ has Hausdorff dimension $r_1+r_2$ in $\R^{r_1+r_2}$;
\item given $(\beta^i_j, j=1,\dots,r_i, i=1,2)$, the set of $\alpha$ such that $(H_2)$ hold has Hausdorff dimension 2.
\end{itemize}
\end{rem}

\vskip 3mm
\begin{ex} \label{Gamma} {\rm  Let $\gamma_1 < \gamma_2 \in [1, 2[$. Consider the function 
$\varphi_{\gamma_1, \gamma_2}(x_1,x_2) =\{\gamma_1 x_1\} \{\gamma_2 x_2\}$ 
restricted to $[0, 1[ \times [0, 1[$. By an elementary computation, we obtain 
\begin{eqnarray*}
&&\int_0^1 \int_0^1 \varphi_{\gamma_1, \gamma_2} \, dx_1 dx_2 = \mu(\varphi_{\gamma_1, \gamma_2}) 
= (\frac12 \gamma_1 -1 + \gamma_1^{-1})  (\frac12 \gamma_2 -1 + \gamma_2^{-1}),\\
&&\lambda_1(\varphi_{\gamma_1, \gamma_2}) = \gamma_1 (\frac12 \gamma_2 - 1 + \gamma_2^{-1}),
\ \lambda_2(\varphi_{\gamma_1, \gamma_2}) = \gamma_2 (\frac12 \gamma_1 - 1 + \gamma_1^{-1}).
\end{eqnarray*}
Putting $\varphi^1= \varphi_{\gamma_1, \gamma_2} - \mu(\varphi_{\gamma_1, \gamma_2}) , \varphi^2= \varphi_{1, 1} - \frac14$, 
the function $\Phi := (\varphi^1, \varphi^2)$ is in ${\cal F}_2$.

The sets $J_1, J_2$ of the definition of ${\cal F}_2$ in \ref{defF2} are:
$$J_1= \{\beta^1_0=0 < \beta^1_1 = \gamma_1^{-1} < \beta^1_2= 1\}, \ J_1= \{\beta^2_0=0 < \beta^2_1 = \gamma_2^{-1} < \beta^2_2= 1\}.$$

If $\alpha_1$ and $\alpha_2$ are in Bad, the condition $H_2$ is satisfied if $\gamma_i$ is rational 
or more generally if $\gamma_i^{-1} \in \operatorname{Bad}_{\Z}(\alpha_i)$, $i=1,2$. 
Moreover, if $\gamma_1 \not = \gamma_2$ and ${\gamma_1 + \gamma_2 \over  \gamma_1  \gamma_2} \not = 1$, it holds
$$\lambda_1(\varphi_{\gamma_1, \gamma_2}) \lambda_2(\varphi_{1, 1}) - \lambda_2(\varphi_{\gamma_1, \gamma_2}) \lambda_1(\varphi_{1, 1}) 
= \frac12 (\gamma_2 - \gamma_1) (1 - {\gamma_1 + \gamma_2 \over  \gamma_1  \gamma_2}) \not = 0.$$ } \end{ex}

We use the notation of the previous subsection (cf. (\ref{Rll}).
For every $n \geq 1$, the ergodic sums $\varphi_n$ are $C^1$ on the atoms 
$R_{\ell, \ell'}^n = ]\gamma_{n,\ell}^1, \gamma_{n,\ell+1}^1[ \, \times \, ]\gamma_{n,\ell'}^2, \gamma_{n,\ell'+1}^2[$ of the partition 
$\displaystyle {\cal P}_{\varphi}^{n} := {\cal P} \wedge T_\alpha^{-1}{\cal P} \wedge ... \wedge T_\alpha^{n-1}{\cal P}$.
We consider also the partition $\tilde {\Cal P}_n= T_{\alpha}^n {\Cal P}_n$.

We will use the following variant of Lemma \ref{lem:var}:
\begin{lem}\label{lemdev} Let $\varphi = (\varphi^1, \varphi^2)$ be in $\cal F_2$. If $x = (x_1, x_2)$ and $x+u = (x_1+u_1, x_2+u_2)$ belong 
to the same element of the partition ${\cal P_{\varphi}^n}$, we have,
\begin{eqnarray}
&&\varphi^i_n(x+u) = \varphi^i_n(x) + (\lambda_1(\varphi^i) u_1 + \lambda_2(\varphi^i) u_2) \, n + o(n) |u| + \varepsilon(u) |u| \, n, \label{dev}
\end{eqnarray}
with $\varepsilon(t)$, defined for $|t|$ small, depending only on $\varphi$ and such that $\lim_{t \to 0} \varepsilon(t) = 0$.
\end{lem}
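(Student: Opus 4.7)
The plan is to write the difference $\varphi^i_n(x+u) - \varphi^i_n(x)$ as a sum over $k = 0,\dots,n-1$ of the increments $\varphi^i(T_\alpha^k(x+u)) - \varphi^i(T_\alpha^k x)$ and apply a first-order Taylor expansion on each term. The hypothesis that $x$ and $x+u$ lie in the same atom of ${\cal P}_\varphi^n = \bigvee_{k=0}^{n-1} T_\alpha^{-k}{\cal P}_\varphi$ means, by construction of this refinement, that for every $0 \leq k < n$ the points $T_\alpha^k x$ and $T_\alpha^k(x+u)$ belong to the same open rectangle $P_{j_k, j'_k}$ of ${\cal P}_\varphi$. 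Lifting to $\R^2$, the straight segment joining them stays in the closure of that rectangle, where by definition of ${\cal F}_2$ the component $\varphi^i$ is $C^1$ with partial derivatives admitting continuous extensions.

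On each such segment I would use the standard Taylor estimate
$$\varphi^i(T_\alpha^k(x+u)) - \varphi^i(T_\alpha^k x) = \frac{\partial \varphi^i}{\partial x_1}(T_\alpha^k x)\, u_1 + \frac{\partial \varphi^i}{\partial x_2}(T_\alpha^k x)\, u_2 + r_k^i(x,u),$$
where $|r_k^i(x,u)| \leq \eta^i(|u|)\,|u|$ and $\eta^i$ is a modulus of continuity for the partials of $\varphi^i$ on the finite disjoint union $\bigsqcup_{j,j'}\overline{P_{j,j'}}$. Since this union is a finite disjoint collection of compact sets on which the extended partials are continuous, they are uniformly continuous there, and we may take $\eta^i(t) \to 0$ as $t \to 0$ independently of $k$, $x$, and $n$.

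Summing over $k$ and invoking (\ref{deriv1}) — that is, the uniform convergence $\frac1n \sum_{k=0}^{n-1} \frac{\partial \varphi^i}{\partial x_j}(T_\alpha^k x) \to \lambda_j(\varphi^i)$ coming from unique ergodicity of $T_\alpha$ applied to the Riemann-integrable partials — the linear part produces $(\lambda_1(\varphi^i) u_1 + \lambda_2(\varphi^i) u_2)\, n + o(n)\,|u|$ uniformly in $x$, while the remainder sum is controlled by $n\,\eta^i(|u|)\,|u|$. Setting $\varepsilon(u) := \max_{i=1,2} \eta^i(|u|)$ yields (\ref{dev}).

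The main subtlety is securing uniformity of the Taylor remainder simultaneously in $k$, $x$ and $n$: naively the $n$ summands could pile up errors that scale worse than $n|u|$. This is exactly why we need the partials of $\varphi^i$ to extend \emph{continuously} to the closure of each cell of ${\cal P}_\varphi$ — so that a single modulus of continuity $\eta^i$, depending only on $\varphi$, suffices, and one may then absorb the $n$ identical bounds into the single factor $\varepsilon(u)\,|u|\,n$ with $\varepsilon(u) \to 0$ as $u \to 0$.
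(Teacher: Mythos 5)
Your proof is correct and follows essentially the same route as the paper's: for each $k<n$ the points $T_\alpha^k x$ and $T_\alpha^k(x+u)$ lie in the same cell of ${\cal P}_\varphi$, a first-order expansion with a remainder controlled by a uniform modulus of continuity of the (continuously extended) partials gives $\varepsilon(u)|u|$ per term, and summing and invoking (\ref{deriv1}) yields (\ref{dev}). Your explicit justification of the uniformity of the Taylor remainder via compactness of the closed cells is a point the paper leaves implicit, but the argument is the same.
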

\proof \  By the hypothesis on $x$ and $x+u$, their images $T_\alpha^k x$, and $ T_\alpha^k (x+u)$, for $0 \leq k < n$,
 belong to the same rectangle $P_{j, j'}$ for some $j,j'$. For $i=1,2$, as the partial derivatives of $\varphi^i$ can be extended to continuous functions on the closure of $P_{j, j'}$, 
 by the mean value theorem, we have, with $\varepsilon$ as in the statement:
\begin{eqnarray*}
&&|\varphi^i(T_\alpha^k (x+u)) - [\varphi^i(T_\alpha^k x)
+ u_1 {\partial \varphi^i \over \partial x_1}(T_\alpha^k x) + u_2 {\partial \varphi^i \over \partial x_2}(T_\alpha^k x)]| 
\leq \varepsilon(u) |u|.
\end{eqnarray*}
It follows: 
\begin{eqnarray*}
&&|\varphi^i_n(x+u) - [\varphi^i_n(x)+ u_1 \sum_{k=0}^{n-1}{\partial \varphi^i \over \partial x_1}(T_\alpha^k x) 
+ u_2 \sum_{k=0}^{n-1} {\partial \varphi^i \over \partial x_2}(T_\alpha^k x)]| \leq \varepsilon(u) |u| \,n.
\end{eqnarray*}
Using (\ref{deriv1}), we get (\ref{dev}). \eop

\subsection{Ergodicity of $(\varphi_n)$ for $\varphi =(\varphi^1, \varphi^2) \in {\cal F_2}$}

\

\begin{thm}\label{xymod} Let $\varphi=(\varphi^1, \varphi^2) \in {\cal F_2}$ be such that 
$\lambda_1(\varphi^1) \lambda_2(\varphi^2) - \lambda_2(\varphi^1) \lambda_1(\varphi^2) \not = 0$.

A) Let $\alpha = (\alpha_1, \alpha_2)$ be totally irrational. Suppose that the hypothesis $H_2$ for the discontinuities of $\varphi^1, \varphi^2$ is satisfied. 
If the $\R^2$-valued cocycle $(\Phi_{n, \alpha})$ generated by $\Phi = (\varphi^1, \varphi^2)$ over the rotation  $T_\alpha$ on $\T^2$ is recurrent,
then it is ergodic.

B) Suppose that $\varphi^1, \varphi^2$ have bounded partial derivatives of first and second order on the interior of their continuity domain.
Given the discontinuities $\beta_j$, the set of $\alpha\in\R^2$ such that the cocycle $(\Phi_{n, \alpha})$ is ergodic is of Hausdorff dimension $2$. 
\end{thm}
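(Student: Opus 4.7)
My plan for part A is to adapt the scheme of Theorem \ref{thm:ergodicity1} to a two-dimensional target, replacing the one-variable monotonicity argument of Lemma \ref{lem:var} by the Taylor expansion of Lemma \ref{lemdev} together with the invertibility of the matrix
\begin{equation*}
\Lambda := \begin{pmatrix} \lambda_1(\varphi^1) & \lambda_2(\varphi^1) \\ \lambda_1(\varphi^2) & \lambda_2(\varphi^2) \end{pmatrix},
\end{equation*}
whose determinant is nonzero by hypothesis. Concretely, I would show that every $\theta$ in some small open ball $B(0,r_0) \subset \R^2$ belongs to $\overline{\mathcal E(\Phi)}$. Since $\mathcal E(\Phi)$ is a closed subgroup of $\R^2$ containing a neighborhood of the origin, this would force $\mathcal E(\Phi) = \R^2$ and hence ergodicity.

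For the construction, fix $\theta \in B(0, r_0)$, an open neighborhood $V = B(\theta, \rho)$ and a measurable set $B$ of positive measure. Under $H_2$, Lemma \ref{lem:bad} applies to \emph{both} coordinates, so the atoms of $\mathcal P_\varphi^n$ are already rectangles with both side lengths of order $1/n$. I would refine this partition to $\mathcal U_n$ (and consider its push-forward $\tilde{\mathcal U}_n := T^n \mathcal U_n$) exactly as in step 1a of the proof of Theorem \ref{thm:ergodicity1}, but cutting in both variables so that atoms have sides of order $\min(1/n, \omega(1/(2n^2)))$, where $\omega$ is a local modulus of continuity for $\Phi$ on each element of $\mathcal P_\varphi$. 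Applying the Lebesgue density theorem twice, as in the previous proof, would produce for any $\varepsilon > 0$ a positive-measure subset $B_\varepsilon \subset B$ and an integer $n_\varepsilon$ giving the $\R^2$-analog of (\ref{BBRn}). By the recurrence hypothesis I would then pick $n \geq n_\varepsilon$ and $a \in B_\varepsilon \cap T^{-n}B_\varepsilon$ with $|\Phi_n(a)|$ arbitrarily small. Lemma \ref{lemdev} provides $\Phi_n(a+u) = \Phi_n(a) + n \Lambda u + o(n|u|)$ uniformly for $u$ with $a+u \in R_n(a)$, so setting $u_0 := n^{-1}\Lambda^{-1}(\theta - \Phi_n(a))$ (whose magnitude is of order $|\theta|/n$ and thus lies well inside $R_n(a) - a$ once $r_0$ is taken as a small multiple of $c/\|\Lambda^{-1}\|$), the preimage in $R_n(a)$ of $V$ by $\Phi_n$ has Lebesgue measure of order $\rho^2/(n^2 |\det\Lambda|)$, i.e.\ of order $\rho^2$ times $\mu(R_n(a))$. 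Choosing $\varepsilon$ of order $\rho^2$ and arguing as in the final display of the proof of Theorem \ref{thm:ergodicity1} would then yield $\mu(B \cap T^{-n}B \cap \Phi_n^{-1}V) > 0$. The hardest part will be controlling the error $o(n|u|)$ in Lemma \ref{lemdev} uniformly enough for the linearization to genuinely cover $V$; this is why the refinement by $\omega(1/(2n^2))$ is needed and $r_0$ must be chosen small.

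For part B, the assumption that $\varphi^1$ and $\varphi^2$ have bounded first- and second-order partial derivatives on the interiors of their continuity rectangles $P_{j,j'}$ implies, after subdividing each $P_{j,j'}$ into two triangles, that $\varphi^1, \varphi^2 \in \mathcal G$ in the sense of Definition \ref{classG}. Theorem \ref{thm:recurrence}(1) applied with $d = 2$ then shows that the set of $\alpha \in \R^2$ for which $(\Phi_{n,\alpha})$ fails to be recurrent has Hausdorff dimension at most $2 - \tfrac{1}{3} = \tfrac{5}{3}$. On the other hand, by the remark following Definition \ref{defF2} (to be substantiated in Subsection \ref{Badly}), for prescribed discontinuities $\beta^i_j$ the set of $\alpha \in \R^2$ satisfying $H_2$ has Hausdorff dimension $2$. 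Removing a subset of dimension at most $\tfrac{5}{3}$ from a set of dimension $2$ leaves a set of dimension $2$ on which $(\Phi_{n,\alpha})$ is both recurrent and $H_2$-admissible; part A then yields ergodicity on this set.
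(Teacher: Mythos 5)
Your overall architecture for A) --- Lemma \ref{lem:bad} applied in both coordinates, the Lebesgue density theorem on the rectangles of ${\cal P}_\varphi^n$, recurrence to produce $a \in B_\varepsilon \cap T^{-n}B_\varepsilon$ with $|\Phi_n(a)|$ small, and the linearization of Lemma \ref{lemdev} with the invertible matrix $\Lambda$ (the paper's $M$) --- is exactly the paper's. But the step where you place $a+u_0$, with $u_0 = n^{-1}\Lambda^{-1}(\theta-\Phi_n(a))$, ``well inside $R_n(a)$'' contains a genuine gap: knowing $|u_0|$ is a small multiple of $1/n$ does not give $a+u_0\in R_n(a)$, because the recurrence point $a$ may lie arbitrarily close to the side of $R_n(a)$ towards which $u_0$ points, and you have no control over the position of $a$ inside its rectangle (the positive-measure set of points with $|\Phi_n|$ small could be concentrated near the boundary). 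All that is guaranteed is that one of the four quadrant squares of side $c_0/n$ with corner at $a$ is contained in $R_n(a)$; hence $\Phi_n$ is only shown to sweep out a perturbation of one of the four parallelograms $\Lambda(\pm[0,c_0]\times\pm[0,c_0])$ translated by $\Phi_n(a)$, and which one depends on $n$ and $a$. For a fixed target $\theta$ this does not verify the essential-value condition, which must hold for every set $B$ of positive measure. This is precisely why the paper does not argue directly that each $\theta$ is an essential value: it argues by contradiction, placing a compact ball $K$ inside the image parallelogram ${\cal L}_0$ disjoint from ${\mathcal E}(\Phi)$, invoking Lemma \ref{compact} to produce a set $B$ with $\mu(B\cap T^{-n}B\cap(\Phi_n\in K))=0$ for all $n$, and then refuting this with a single well-chosen pair $(n,x^0)$, the quadrant square being selected after one sees where $x^0$ sits (``up to a change of the signs of $u_1,u_2$''). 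You need either to adopt that route or to add an argument handling the four possible positions of $a$; as written, the claim that each individual $\theta\in B(0,r_0)$ lies in ${\mathcal E}(\Phi)$ is not established. (A secondary remark: refining ${\cal P}_\varphi^n$ in both variables down to scale $\omega(1/(2n^2))$ would be counterproductive --- the image under $n\Lambda$ of such a small atom could not contain $V$; under $H_2$ and the $C^1$ hypothesis defining ${\cal F}_2$ no refinement is needed, and the paper works directly with the rectangles $R_n(x)$ of ${\cal P}_\varphi^n$.)

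Part B) of your proposal coincides with the paper's argument: $\varphi^1,\varphi^2\in{\cal G}$ after triangulating the rectangles, Theorem \ref{thm:recurrence} bounds the Hausdorff dimension of the non-recurrent set by $2-\tfrac13$, Corollary \ref{cor:badApprox2} gives dimension $2$ for the set of $\alpha$ satisfying $H_2$, and the intersection therefore still has dimension $2$, so A) applies on a set of full dimension.
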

\proof \  A) We keep the notations of Section \ref{sec:F}. By Lemma \ref{lem:bad}, Hypothesis $H'_2$ implies the existence of two positive constants $c, c'$ such that
\begin{equation}\label{defcs1}
{\frac{c}{n}} \leq \gamma_{n, \ell+1}^i - \gamma_{n,\ell}^i \leq \frac{c'}{ n}, \ \ell= 1,\ldots, |J^i|n, \ i=1,2.
\end{equation}
For $x \in [0,1] \times [0,1] \setminus \bigcup_k \partial {\cal P}_{\varphi}^k$, let $R_n(x)$ be the rectangle element of the partition ${\Cal P}_{\varphi}^n$ containing $x$.
Likewise let $\tilde R_n(x)$ be the element of the partition $\tilde {\Cal P}_{\varphi}^n$ containing $x$.

Let $x=(x_1, x_2)$ and $u=(u_1,u_2)$ be such that $x$ and $x+u$ belong to the same element of the partition ${\cal P_{\varphi}^n}$.
It follows from Lemma \ref{lemdev} for $n$ big and $|u|$ small: 
\begin{eqnarray*}
\varphi_n^i(x+u) = \varphi_n^i(x) + (\lambda_1(\varphi^i) u_1 + \lambda_2(\varphi^i) u_2)n + o(n) |u|   + o(|u|) n, \, i=1,2.
\end{eqnarray*}
We set $M =  \left( \begin{matrix} \lambda_1(\varphi^1) & \lambda_2(\varphi^1)& \cr \lambda_1(\varphi^2) & \lambda_2(\varphi^2) \end{matrix} \right)$ and
$u^t =  \left( \begin{matrix} u_1 \cr u_2 \end{matrix} \right)$. Since $|u|=O(\tfrac1n)$ by (\ref{defcs1}), there is for every $\varepsilon >0$
an integer $N^1_\varepsilon$ depending only on $\varphi$ and $\varepsilon$ such that
\begin{eqnarray}
|\varphi_n(x+u) - [\varphi_n(x) + n M u^t]| \leq \varepsilon, \text{ for } n > n_\varepsilon. \label{dev1}
\end{eqnarray}
Therefore, for $n >  N^1_\varepsilon$, 
\begin{eqnarray}
|\varphi_n(x+u) - n M u^t| \leq \varepsilon + |\varphi_n(x)|. \label{MatrApprox}
\end{eqnarray}
Recall that $c$ and $c'$ are defined in (\ref{defcs1}). The matrix $M$ is invertible by  the hypothesis of the theorem.
For $c_0 = \frac12 c$, the image by $M$ of a square $Q = [0, c_0] \times [0, c_0]$ is a parallelogram ${\cal L}_0$ 
with a vertex at the origin.

Suppose that ${\mathcal E}(\varphi) \not = \R^2$. According to the form of the closed subgroups of $\R^2$, there is an open ball $U$ of radius $r > 0$ in $\R^2$,
such that its closure, the compact set $K =\overline U$, is contained in ${\cal L}_0$ and is disjoint from ${\mathcal E}(\varphi)$.
For a constant $\nu >0$, we have $\lambda(U) \geq \nu \lambda({\cal L}_0)$. Calling $U_0 \subset U$ the ball of radius $r-2\varepsilon$ with the same center as $U$,
we take $\varepsilon$ small enough so that $\lambda(U_0) \geq \frac12 \nu \lambda({\cal L}_0)$ and $ \frac12 \nu (\frac{c_0}{c'})^2> 2 \varepsilon$.

The theorem will be proved by contradiction if we show:

{\it Claim: $K$ contains an essential value of $\varphi$. }

{\it Proof of the claim.} 

Suppose that $K \cap {\mathcal E}(\varphi) = \emptyset$. To get a contradiction, we use Lemma \ref{compact} which implies that there exists 
$B \in\cal{B}$ such that $\mu(B)>0$ and 
\begin{eqnarray}
\mu(B\cap T^{-n}B \cap (\varphi_n\in K))=0, \, \forall n \in \Z. \label{meas0}
\end{eqnarray}
As in Theorem \ref{thm:ergodicity1}, using the Lebesgue density theorem (Theorem \ref{densL}) for $B$, we obtain: 

For every $\varepsilon >0$, there are $N^2_\varepsilon \geq 1$ and a set of positive measure $B_\varepsilon\subset B$ such that
\begin{eqnarray}
&&\mu(B \cap T^{-n} B \cap R_n(u)) \geq (1 - 2 \varepsilon ) \mu(R_n(u)), \forall u \in B_\varepsilon \cap T^{-n} B_\varepsilon, 
\text{ for } n \geq N^2_\varepsilon. \label{muRn}
\end{eqnarray}

The recurrence of $(\varphi_n)$ implies (cf. Remark \ref{rem1}.a) that there exist infinitely many integers $n \geq N^2_\varepsilon$ such that
\begin{eqnarray}
&&\mu(B_\varepsilon\cap T_\alpha^{-n} B_\varepsilon \cap (|\varphi_n|< \varepsilon)) > 0. \label{apprec}
\end{eqnarray}
Therefore, we can choose $n\geq \sup(N^1_{\varepsilon}, N^2_{\varepsilon})$ and $x^0 \in \T^2$ 
such that $|\varphi_n(x^0)|< \varepsilon$ and $x^0 \in B_\varepsilon \cap T^{-n} B_\varepsilon$,
which implies by (\ref{muRn}):
\begin{eqnarray}
\mu(B \cap T^{-n} B \cap R_n(x^0)) \geq (1 - 2 \varepsilon ) \mu(R_n(x^0)). \label{muRx0}
\end{eqnarray}
We can assume that $x^0$, which belongs to $R_n(x^0)$, is one of the corners of a square $Q_n\subset R_n(x^0)$ of size ${c_0 \over n} \times {c_0 \over n}$. 
Up to a change of the signs of $u_1, u_2$, we can also assume that $x^0$ is the lower left corner.

According to the definition of $\Cal L_0$ at the beginning of the proof, $nM(Q_n-x^0) = {\cal L}_0$ and the measure of the open set $W = (nM)^{-1} U_0 + x^0$ 
satisfies $\mu(W)=\mu(Q_n)\frac{\lambda(U_0)}{\lambda({\cal L}_0)} \geq \frac12 \nu(\frac{c_0}{c'})^2 \mu(R_n(x^0))$ by (\ref{defcs1}). 
Recall that $\varepsilon$ is such that $\frac12 \nu(\frac{c_0}{c'})^2 -2\varepsilon > 0$.

Clearly, $W\subset Q_n\subset R_n(x^0)$. Moreover,  since $|\varphi_n(x^0)|< \varepsilon$,  by (\ref{MatrApprox}) we have 
$$W = \{x: nM (x - x^0) \in U_0\} \subset \{x: d(\varphi_n(x), U_0) \leq 2 \varepsilon\} \subset \{x: \varphi_n(x) \in U\} \subset \{x: \varphi_n(x) \in K\}.$$
Observe that for any three sets $E_1, E_2, E_3$, 
\begin{eqnarray*}
\mu(E_1 \cap E_2 \cap E_3) \geq \mu(E_1 \cap E_2) - \mu(E_2) + \mu(E_2 \cap E_3). 
\end{eqnarray*}
Using (\ref{muRx0}), it follows, with $E_1= B \cap T^{-n} B$, $E_2 =  R_n(x^0)$, $E_3 = W$:
\begin{eqnarray*}
&&\mu(B \cap T^{-n} B \cap R_n(x^0) \cap \{x: \varphi_n(x) \in K\}) \geq \mu(B \cap T^{-n} B \cap R_n(x^0) \cap W)\\ 
&&\geq (1 - 2 \varepsilon) \, \mu(R_n(x^0)) - \mu(R_n(x^0)) +\mu(R_n(x^0) \cap W) \geq (\frac12 \nu(\frac{c_0}{c'})^2 -2\varepsilon) \, \mu(R_n(x^0)) > 0. 
\end{eqnarray*}
This gives a contradiction with (\ref{meas0}) and concludes the proof of A). 

B) If $(\varphi^1, \varphi^2)$ is in $\cal F$ and also satisfy the hypothesis of B), then $\varphi^1$ and $\varphi^2$ are in $\cal G$ (cf. Definition \ref{classG}).

By Theorem \ref{thm:recurrence}, the set of $\alpha\in\R^2$ such that the cocycle $(\varphi_n)$ is not recurrent has a Hausdorff dimension $\leq 2- 1/3$.  
Let $B$ be its complement (the set of $\alpha$ such that $(\Phi_n)$ is recurrent).
The set $A$ of $\alpha$ such that the condition $H_2$ is satisfied has a Hausdorff dimension 2 by Corollary \ref{cor:badApprox2}. 
Therefore the Hausdorff dimension\footnote{For the sets A and B, we have
$2 = \dim_H A \leq \max(\dim_H(A \cap B), \dim_H(A \cap B^c)) \leq \max(\dim_H(A \cap B), \dim_H(B^c)) \leq \max(\dim_H(A \cap B),  2- 1/3)$;
hence $\dim_H(A \cap B) = 2$.} of $A \cap B$ is 2.

By A) it follows that the set of $\alpha \in \R^2$ such that the cocycle $(\varphi_n)$ is ergodic, is of Hausdorff dimension $2$. 
\eop

\vskip 3mm
{\it An algebraic example}: If $\alpha_1, \alpha_2$ are quadratic, then recurrence follows from Theorem \ref{thm:recurrence} 2).
As $\alpha_1, \alpha_2$ are in Bad, we obtain ergodicity for the cocycle defined in Example \ref{Gamma}.
This gives an explicit example of an ergodic cocycle.

\section{\bf Ergodicity of compact extensions for the triangle $\Delta_0$} \label{Delta0}

In Theorems \ref{thm:ergodicity1} and \ref{xymod}, the discontinuities of the function $\Phi:\T^2\fff\R^d$ generating an ergodic cocycle 
lie along lines parallel to the coordinate axes. It is not easy to adapt our method to construct cocycles generated by a function with more general discontinuities. 

When the set of discontinuities is the boundary of the triangle $\Delta_0 = \{(x_1,x_2) \in [0,1]^2: x_1>x_2\}$,  
the diameters of the connected components of the continuity set of the ergodic sum $\Phi_n$, vary at least from $ 1/n^\gamma$ to $1/n$ 
for arbitrarily large values of $n$ where $\gamma=\frac{1+\sqrt 5}2$ is the golden ratio. 

Specifically, the distance from the discontinuity line $x_1 = x_2 \mod \Z$ to $(-n_1\alpha_1,-n_2\alpha_2)$, 
intersection of the vertical discontinuity line  $x_1=-n_1\alpha_1\mod\Z$ and of the horizontal discontinuity line  
$x_2=-n_2\alpha_2\mod \Z$,  is $\frac1{\sqrt2}\|n_1\alpha_1-n_2\alpha_2\|$. 
Moreover, according to a result by W.M. Schmidt \cite{WSch70}, if $(\alpha_1, \alpha_2)$ is totally irrational, there exist infinitely many integers $n$ 
such that there are integers $0<n_1,n_2<n$ with $\|n_1\alpha_1-n_2\alpha_2\|\leq n^{-\gamma}.$  

This large ratio between small and large diameters implies that Lebesgue's density theorem cannot be used directly. 
Furthermore, the diameter of the  image by $\Phi_n$ of a small component is  $\leq n\times 1/n^\gamma$, which is insufficient to establish the existence of non-zero essential values.

Nevertheless, some partial results for $\Delta_0$ can be shown, notably the ergodicity of compact extensions.

{\bf Ergodicity of compact extensions for $1_{\Delta_0}$, $\Delta_0 = \{(x, y) \in [0, 1]^2: x < y \}$.}

For a general compact extension $\tilde T_\varphi: (x, y) \to (Tx, y + \varphi(x))$, where $T$ is an ergodic measure preserving transformation on a probability space 
$(X, {\cal B}, \mu)$ and $\varphi$ a measurable function on $X$ with values in $\T^d$, $d \geq 1$, one obtains easily a criterium of ergodicity:
\begin{lem} $\tilde T_\varphi$ is ergodic on $X \times \T^d$ if and only if the functional equation
\begin{eqnarray}
H(Tx) = e^{2\pi i  \langle \k,\varphi(x) \rangle} \, H(x) \label{equa0}
\end{eqnarray}
has no measurable solution $H$ of modulus 1 for $\k \in \Z^d \setminus \{\0\}$. 
\end{lem}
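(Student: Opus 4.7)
The plan is to exploit Fourier analysis on the compact fiber $\T^d$. The key observation is that any $L^2$ function $F$ on $X \times \T^d$ admits an expansion $F(x,y) = \sum_{\k \in \Z^d} F_\k(x) \, e^{2\pi i \langle \k,y\rangle}$ with $F_\k \in L^2(X,\mu)$, and the invariance condition $F \circ \tilde T_\varphi = F$ transforms, by uniqueness of Fourier coefficients, into the countable family of equations $F_\k(Tx) = e^{-2\pi i \langle \k,\varphi(x)\rangle} F_\k(x)$ a.e., one for each $\k$.

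For the direction ``$\tilde T_\varphi$ ergodic $\Rightarrow$ no solution'', I would argue by contrapositive. Suppose $H:X\to \mathbb{S}^1$ is measurable and satisfies \eqref{equa0} for some $\k \neq \0$. Define $F(x,y) := \overline{H(x)} \, e^{2\pi i \langle \k,y\rangle}$. A direct computation using \eqref{equa0} shows $F(Tx, y+\varphi(x)) = F(x,y)$, so $F$ is $\tilde T_\varphi$-invariant. Since $F$ depends non-trivially on $y$ (as $\k \neq \0$), it is not a.e.\ constant, contradicting ergodicity of $\tilde T_\varphi$.

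For the converse, assume that \eqref{equa0} has no modulus-one measurable solution for any $\k \neq \0$, and let $F \in L^2(X \times \T^d)$ be $\tilde T_\varphi$-invariant. Apply the Fourier expansion above; the invariance of $F$ yields $F_\k(Tx) = e^{-2\pi i \langle \k,\varphi(x)\rangle} F_\k(x)$ a.e.\ for every $\k$. Taking absolute values gives $|F_\k(Tx)| = |F_\k(x)|$ a.e., so by ergodicity of $T$ each $|F_\k|$ is a constant $c_\k \geq 0$. For $\k=\0$ this already shows $F_\0$ is $T$-invariant, hence a constant by ergodicity of $T$. For $\k \neq \0$, if $c_\k > 0$, then $H := \overline{F_\k}/c_\k$ is a measurable map into $\mathbb{S}^1$ satisfying $H(Tx) = e^{2\pi i \langle \k,\varphi(x)\rangle} H(x)$, contradicting the hypothesis. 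Therefore $F_\k = 0$ a.e.\ for every $\k \neq \0$, so $F$ is constant a.e., which proves the ergodicity of $\tilde T_\varphi$.

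There is no real obstacle: the proof is formally routine once one writes down the Fourier expansion. The only minor care needed is the $L^2$/a.e.\ bookkeeping when identifying the Fourier coefficients after composition with $\tilde T_\varphi$ (which is justified by Fubini and the unitarity of translation on $\T^d$), and the fact that measurability of $H=\overline{F_\k}/c_\k$ is automatic on the set $\{c_\k >0\}=X$.
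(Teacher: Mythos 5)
Your proof is correct and follows essentially the same route as the paper's: expand an invariant function in Fourier series along the fiber $\T^d$, derive the coefficient equation $F_{\k}(Tx)=e^{-2\pi i\langle \k,\varphi(x)\rangle}F_{\k}(x)$, note that $|F_{\k}|$ is $T$-invariant hence constant, and conclude that $F_{\k}=0$ for $\k\neq\0$. You are in fact slightly more complete than the paper, which only writes out the ``no solution $\Rightarrow$ ergodic'' direction and asserts the constancy of $|F_{\k}|$ without comment.
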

\proof \ Let $F: X \times\T^d\fff\R$ be a measurable $\tilde T_{\varphi}$-invariant function. By truncation, we can suppose $F$ bounded. 
For $\k \in \Z^d\setminus\{\0\}$, its Fourier coefficient with respect to $y$ satisfies  
\begin{eqnarray*}F_{\k}(x)=\int_{\T^d} F(x,y) e^{-2\pi i \langle \k,y \rangle} dy
=\int_{\T^d} F(\tilde T_{\varphi}(x,y)) e^{-2\pi i \langle \k,y \rangle} dy=e^{2\pi i  \langle \k,\varphi(x)\rangle} F_{\k}(T x).
\end{eqnarray*}
Therefore, $|F_k|$ is a.e.-constant and must be a.e. zero by the sufficient condition of the lemma.\eop

Now we take $\a = (a_1, ... a_d) \in \R^d$, put $\Phi_{\a} = 1_{\Delta_0} \,  \a$ 
and consider the skew-product on $\T^2 \times \T^d$ defined for $\alpha = (\alpha_1, \alpha_2)$ by
$$\tilde T_{\alpha, \Phi_{\a}}: (x, y) \to (x+\alpha \modu, y+ \Phi_{\a}(x) \modu).$$
Our aim in this section is to prove that $\tilde T_{\alpha, \Phi_{\a}}$ is ergodic, when $\alpha$ and $\a$ are totally irrational 
and $\alpha$ satisfies a Diophantine condition. We start by a definition and an auxiliary proposition.

{\it Coding of a map}: Given a space $X$, a map $T:X \to X$ and a partition $\cal Q$ of $X$, for an integer $n\geq 1$ the {\it $({\cal Q}, n)$-coding} of $x\in X$ 
associated with $T$ and $Q$, is the sequence $(w_i, i=0,\dots,n-1)\in \cal Q^n$ such that $T^{i}(x)\in w_i$ for $i=0,\dots,n-1$.

\begin{proposition} \label{eqfunct} Let $(X, {\cal B}, \mu, T)$ be a dynamical system, where $X$ is a compact
metric space with a distance $d_X$, ${\cal B}$ is borelian 
and $T$ is an isometry which preserves $\mu$. Let $G$ be a group endowed with a bi-invariant distance $\delta$
and $\varphi$ a measurable function from $X$ to $G$ taking a finite number of values and non constant a.e. 
Let $\delta_0=\delta_0(\varphi):=\min\{\delta(g,g'): g, g' \text{ distinct values of }\varphi\} >0$. 

Let ${\cal Q}$ be the partition of $X$ into the sets on which $\varphi$ is constant. Suppose that for each integer $\ell \geq 1$, 
there exists a finite partition\footnote{up to a set of 0 measure} $\cal P_{\ell}$ of $X$
such that for each $P \in \cal P_{\ell}$, there exists $m_{\ell}(P) \in \cal Q^{\ell}$ which is the $({\cal Q}, \ell)$-coding of all $x\in P$.

Assume also that there is a sequence $(\ell_k)_{k\geq 1}$ such that

$1)$ $\displaystyle \lim_{k\fff\infty} \max_{P \in {\cal P}_{\ell_k}} {\rm diam}(P) = 0$;
\hfill \break $2)$ for some $\rho \geq 1$, for all $k \geq 1$ and all $P\in {\cal P}_{\ell_k}$, there are at most $\rho$ elements $P' \in\cal P_{\ell_k}$ 
such that the closures satisfy $\overline P \cap \overline {P'} \neq \emptyset$;

$3)$ there is a family ${\cal C}_ {c,\ell_k}\subset \cal P_{\ell_k}$ and positive constants $c$ and $\lambda$ such that 
\hfill\break $3a)$ for all $P\in{\cal C}_{c,\ell_k}$, $ \mu(P) \geq {c \over \Card({\cal P}_{\ell_k})}$;
\hfill \break $3b)$ $N_k := \Card({\cal C}_{c,\ell_k}) \geq \lambda \, \Card({\cal P}_{\ell_k})$;
\hfill \break $3c)$ for all $P \in {\cal C}_{c,\ell_k}$, there is at least one element $P'$ in ${\cal C}_{c,\ell_k}$ such that 
$\overline P\cap \overline {P'} \neq \emptyset$ and the codings $m_{\ell_k}(P)$ and $m_{\ell_k}(P')$ have exactly one different component.

Then there is no measurable solution $f: X \to G$ of the functional equation
\begin{eqnarray} 
\varphi(x) = (f(x))^{-1} \, f(T x), \mu \text{-a.e.} \label{eqfonc0}
\end{eqnarray}
\end{proposition}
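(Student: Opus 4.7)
The plan is to argue by contradiction: assume a measurable $f : X \to G$ satisfies $\varphi(x) = f(x)^{-1} f(Tx)$ a.e., so that the multiplicative cocycle telescopes as $\varphi_{\ell}(x) = f(x)^{-1} f(T^\ell x)$ for every $\ell \geq 1$. The contradiction will be produced by exhibiting, for all large $k$, two nearby points $x \in P$ and $x' \in P'$ in adjacent cells of $\mathcal{C}_{c,\ell_k}$ whose $(\mathcal{Q},\ell_k)$-codings differ in exactly one position, for which $\delta(\varphi_{\ell_k}(x), \varphi_{\ell_k}(x'))$ is simultaneously forced to be at least $\delta_0$ (from the coding difference) and strictly less than $\delta_0$ (from Lusin regularity of $f$).

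The lower bound is the rigid half. If the codings $m_{\ell_k}(P)$ and $m_{\ell_k}(P')$ agree except at position $j$, then for all $x \in P$, $x' \in P'$ we may factor
\[
\varphi_{\ell_k}(x) = A \, \varphi(T^j x) \, B, \qquad \varphi_{\ell_k}(x') = A \, \varphi(T^j x') \, B,
\]
with the same $A, B \in G$, and $\varphi(T^j x) \neq \varphi(T^j x')$ since they come from distinct atoms of $\mathcal{Q}$. Bi-invariance of $\delta$ collapses $\delta(AgB, Ag'B) = \delta(g,g')$, so $\delta(\varphi_{\ell_k}(x), \varphi_{\ell_k}(x')) \geq \delta_0$. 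For the upper bound, using the coboundary identity and bi-invariance twice gives
\[
\delta(\varphi_{\ell_k}(x), \varphi_{\ell_k}(x')) \leq \delta(f(x), f(x')) + \delta(f(T^{\ell_k} x), f(T^{\ell_k} x')),
\]
and the plan is to make both terms small.

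To do so, fix $\varepsilon > 0$ and apply Lusin's theorem to obtain a compact set $K \subset X$ with $\mu(K) > 1-\varepsilon$ on which $f$ is uniformly continuous. Set $K_k := K \cap T^{-\ell_k} K$; since $T$ preserves $\mu$, $\mu(K_k^c) < 2\varepsilon$. Call a cell $P \in \mathcal{C}_{c,\ell_k}$ \emph{bad} if $\mu(P \cap K_k) < \tfrac{1}{2}\mu(P)$; then by (3a) each bad cell contributes at least $c/(2\Card(\mathcal{P}_{\ell_k}))$ to $\mu(K_k^c)$, so the number of bad cells is at most $4\varepsilon\, \Card(\mathcal{P}_{\ell_k})/c$. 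By assumption (2) the partner relation from (3c) is at most $\rho$-to-one, so the number of good cells in $\mathcal{C}_{c,\ell_k}$ whose designated partner is also good is at least $(\lambda - (1+\rho)\cdot 4\varepsilon/c)\,\Card(\mathcal{P}_{\ell_k})$. Choosing $\varepsilon$ small (in terms of $c, \lambda, \rho$) ensures this quantity is positive for every $k$, so a good pair $(P, P')$ exists.

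Picking $x \in P \cap K_k$ and $x' \in P' \cap K_k$ for such a pair, both $x, x'$ and both $T^{\ell_k}x, T^{\ell_k}x'$ lie in $K$. Since $P, P'$ have a common boundary point and $T$ is an isometry,
\[
d_X(x, x') \leq \operatorname{diam}(P) + \operatorname{diam}(P'), \qquad d_X(T^{\ell_k}x, T^{\ell_k}x') = d_X(x, x'),
\]
both of which tend to $0$ as $k \to \infty$ by assumption (1). Uniform continuity of $f$ on $K$ then makes $\delta(f(x), f(x')) + \delta(f(T^{\ell_k} x), f(T^{\ell_k} x')) < \delta_0$ for $k$ large, contradicting the lower bound $\delta_0$. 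The main obstacle is the combinatorial step of guaranteeing a good adjacent pair with differing codings: this is why the hypotheses are bundled as (3a)--(3c) together with the bounded-neighbor condition (2), which together absorb the Lusin error into the density bound $\lambda$ of $\mathcal{C}_{c,\ell_k}$.
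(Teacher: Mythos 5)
Your proposal is correct and follows essentially the same route as the paper: a Lusin set on which $f$ is uniformly continuous, the bi-invariance of $\delta$ collapsing the two $\ell_k$-fold products to a single factor so that codings differing in exactly one place force $\delta(\varphi_{\ell_k}(x),\varphi_{\ell_k}(x'))\geq\delta_0$, the isometry and shrinking diameters forcing the same quantity below $\delta_0$ via the coboundary identity, and hypotheses 2), 3a)--3c) used to show the Lusin exceptional set cannot destroy all adjacent pairs. The only difference is cosmetic: you count cells meeting the Lusin set with density at least $1/2$ and conclude from the existence of one surviving pair, while the paper shows adjacent differing cells cannot both meet $F\cap T^{-\ell_k}F$ and derives a contradictory lower bound on $\mu(X\setminus(F\cap T^{-\ell_k}F))$.
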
 
\Proof \ We act by contradiction and suppose that there is a measurable function $f:X\fff G$ such that $\varphi(x) = (f(x))^{-1} \, f(T x), \mu-a.e.$

Let $\varepsilon$ be such that $\displaystyle 0 <  2 \varepsilon < {c \lambda \over 1+ \rho}$. 

There exists a closed set $F$ in $X$ of measure $>1 - \varepsilon$ such that the restriction $f_{|F}$ (hence also $f^{-1}_{|F}$) is uniformly continuous. 
Let $\eta > 0$ be such that the conditions $x,y \in F$, $\dd_X(x,y) < \eta$ imply $\delta(f(x), f(y))$ and $\delta((f(x))^{-1}, (f(y))^{-1})< \delta_0/2$. 

By 1), there exists $k_0$ such that, for all $k \ge k_0$ and all $P \in \cal P_{\ell_k}$, ${\rm diam}(P) < \eta/2$. For $k \ge k_0$, 
if $P$ and $P'$ are  in $\cal P_{\ell_k}$ and such that $\overline P\cap\overline P'\neq\emptyset$, we have then $d(x,y) \leq \eta$ for any $x \in P$ and $y \in P'$. 

Let $H := F \cap T^{-\ell_k} F$ and ${\cal B}_{c,\ell_k}:=\{P\in {\cal C}_{c,\ell_k}:P\cap H\neq\emptyset\}$. Let $N_{1,k} := \Card({\cal B}_{c,\ell_k})$.

Let $P \neq P' \in{\cal P}_{\ell_k}$ be such that $\overline P\cap \overline{P'} \neq \emptyset$ and $m_{\ell_k}(P)=(u_0,\dots,u_{\ell_k-1})$ 
and $m_{\ell_k}(P')=(u'_0,\dots,u'_{\ell_k-1})$ have exactly one different component.
We claim that $P$ and $P'$ cannot both intersect $H$. 

For each $Q \in\cal Q$, denote $g_Q$ the constant value of $\varphi$ on $Q$. 
Since $\varphi_{\ell_k}(x)$ depends only on the coding of $x$, $\varphi_{\ell_k}$ is constant on $P$ and on $P'$, 
and these constants are the products $\pi_P=g_{u_0}\dots g_{u_{\ell_k-1}}$ and $\pi_{P'}=g_{u'_0}\dots g_{u'_{\ell_k-1}}$. 
Since $u_i=u'_i$ for all $i$ except for one $i$, say $i_0$, the bi-invariance of the distance $\delta$ implies that 
$\delta(\pi_P,\pi_{P'})=\delta(g_{u_{i_0}},g_{u_{i_0}})\geq \delta_0$.

Now by assumption we have $\varphi=f^{-1}f\circ T$, hence $\varphi_{\ell_k}=f^{-1}f\circ T^{\ell_k}$. If $x\in P$ and $y\in P'$,
we have $\delta(x,y)\leq \eta$ because $\overline P \cap \overline {P'} \neq \emptyset$. Since the distance $\delta$ is bi-invariant and since $T$ is an isometry,
if there exist $x\in P \cap H$ and $y\in P' \cap H$, we have
\begin{align*}
&\delta_0\leq\delta(\pi_P,\pi_{P'})=\delta(\varphi_{\ell_k}(x),\varphi_{\ell_k}(y))=\delta(f^{-1}(x)f(T^{\ell_k}(x),f^{-1}(y)f(T^{\ell_k}(y))\\
&\leq \delta((f(x))^{-1}f(T^{\ell_k}(x), (f(y))^{-1} f(T^{\ell_k}(x)) +\delta((f(y))^{-1} f(T^{\ell_k}(x), (f(y))^{-1} f(T^{\ell_k}(y))\\
&< \delta_0/2+\delta_0/2,
\end{align*}
hence either $P\cap H=\emptyset$ or $P'\cap H=\emptyset$, which shows the claim.

Since ${\cal B}_{c,\ell_k}\subset {\cal C}_{c,\ell_k}$, by 3c), for each $P \in {\cal B}_{c,\ell_k}$, there exists $P'$ in ${\cal C}_{c,\ell_k}$ such that 
$\overline P\cap \overline {P'} \neq \emptyset$ and $m_{\ell_k}(P)$ and $m_{\ell_k}(P')$ have exactly one different component. By the previous claim, $P' \cap H=\emptyset$.
 
On the one hand, according to 2), the number of such distinct $P'\in {\cal C}_{c,\ell_k}$ when $P$ ranges  in ${\cal B}_{c,\ell_k}$ is $\geq \rho^{-1}N_{1,k}$. 
Hence, by 3a) $\mu(X\setminus H) \geq \rho^{-1}N_{1,k}\,  c/\Card({\cal P}_{\ell_k})$. 

On the other hand, we have $P \subset X\setminus H$ for all $P \in {\cal C}_{c,\ell_k}\setminus {\cal B}_{c,\ell_k}$, 
hence $\displaystyle \mu(X\setminus H) \geq (N_k - N_{1,k}) {c \over \Card({\cal P}_{\ell_k})}$.
Using 3b) and $\mu(F)\geq 1-\varepsilon$, we obtain
\begin{align*}
2\varepsilon&\geq \mu(X\setminus H) \geq \max((N_k - N_{1,k}) {c \over \Card({\cal P}_{\ell_k})}, \rho^{-1}N_{1,k} {c\over\Card({\cal P}_{\ell_k})}) 
\\&= {c \over \Card({\cal P}_{\ell_k})}\max(N_k-N_{1,k},{N_{1,k}\over \rho})\geq {c \over \Card({\cal P}_{\ell_k})}\, {N_k \over 1 + \rho} \geq {c \lambda \over 1 + \rho}.
\end{align*}
This leads to a contradiction by the choice of $\varepsilon$. \eop

\pgfdeclareimage[height=10.5cm]{im1}{Delta0b} 
\begin{figure} [h]
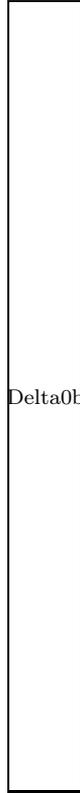
 
\pgfuseimage{im1}
\vskip -4mm
\caption{Partition ${\cal P}_\ell, \ell = 20$ ($\alpha_1= \sqrt 2, \alpha_2=e$)}
\end{figure}

\begin{thm} \label{ergoCpt} Let $(\alpha_1,\alpha_2)$ be totally irrational. If $\alpha_1$, $\alpha_2$ or $\alpha_1-\alpha_2$ is in {\rm Bad}, 
then the compact extension $\tilde T_{\alpha, \Phi_{\a}}$ is ergodic on $\T^2 \times \T$ if $\a$ is totally irrational.
\end{thm}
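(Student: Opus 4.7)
The plan is to combine the ergodicity criterion of the preceding lemma with the structural Proposition \ref{eqfunct}. Fix $\k \in \Z^d \setminus \{\0\}$ and set $\theta_\k := \langle \k, \a\rangle$; since $\a$ is totally irrational, $\theta_\k \notin \Z$. By the preceding lemma, ergodicity of $\tilde T_{\alpha,\Phi_\a}$ reduces to showing that the functional equation
\begin{equation*}
H(T_\alpha x) = e^{2\pi i\, \theta_\k\, 1_{\Delta_0}(x)}\, H(x)
\end{equation*}
has no measurable solution $H:\T^2\to\Sp^1$, for every such $\k$. I would apply Proposition \ref{eqfunct} with $X=\T^2$, $T=T_\alpha$, $G=\Sp^1$ endowed with its canonical bi-invariant distance, and $\varphi(x)=e^{2\pi i\theta_\k 1_{\Delta_0}(x)}$, which takes two values with $\delta_0=|1-e^{2\pi i\theta_\k}|>0$. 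The partition $\cal Q=\{\Delta_0,\Delta_0^c\}$ yields the $\ell$-step coding partition $\cal P_\ell$ whose atoms are the open faces of the arrangement on $\T^2$ of the $3\ell$ lines
\begin{equation*}
V_j:\; x_1\equiv -j\alpha_1,\quad H_j:\; x_2\equiv -j\alpha_2,\quad D_j:\; x_1-x_2\equiv -j(\alpha_1-\alpha_2),\quad 0\le j<\ell.
\end{equation*}
A structural observation: $V_j$, $H_j$, $D_j$ all pass through the triple point $-j\alpha\in\T^2$, so the arrangement contains exactly $\ell$ prescribed triple points and (generically) only double intersections elsewhere; moreover the $j$-th bit of the coding flips exactly when $x$ crosses one of $V_j$, $H_j$, $D_j$.

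Next I would verify the hypotheses of Proposition \ref{eqfunct} along a suitable subsequence $(\ell_k)$. Condition (2) holds with $\rho=6$: each convex face in a line arrangement of only three directions has at most six sides. For conditions (1) and (3), the key observation is that a cell $P$ sits inside the intersection of a vertical, a horizontal and a diagonal strip, so both its diameter and its measure are controlled by the maximum gaps in any two of the three families. By the $(x_1,x_2)\leftrightarrow(x_2,x_1)$ symmetry the case $\alpha_2\in\mathrm{Bad}$ is equivalent to $\alpha_1\in\mathrm{Bad}$, so there are essentially two cases. If $\alpha_1\in\mathrm{Bad}$, the three-distance theorem yields max vertical gap $\leq C/\ell$ uniformly in $\ell$; choosing $\ell_k$ to run through the denominators of the continued fraction of $\alpha_2$ gives additionally max horizontal gap $\leq C'/\ell_k$. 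Hence diameters are $O(1/\ell_k)\to 0$, $\max_P\mu(P)\leq C''/\ell_k^2$, and $\Card(\cal P_{\ell_k})\asymp\ell_k^2$ by a direct Euler count on the torus. The case $\alpha_1-\alpha_2\in\mathrm{Bad}$ is handled analogously by using the diagonal direction together with $\ell_k=$ denominators of $\alpha_1$.

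An elementary averaging argument, using the uniform upper bound $\max_P\mu(P)\leq C''/\ell_k^2$ together with $\sum_P\mu(P)=1$, produces a family of cells of measure $\geq c/\Card(\cal P_{\ell_k})$ whose cardinality is at least $\lambda\Card(\cal P_{\ell_k})$. Condition 3c) I would address by refining this family: working inside the ``large'' rectangles of the auxiliary $(V,H)$-subpartition (which themselves form a positive fraction by the same averaging step), the bounded number of diagonal cuts across such a rectangle produces a small cluster of sub-cells of comparable measure, pairwise adjacent across diagonal edges. Taking $\cal C_{c,\ell_k}$ to be the union of these adjacent pairs inside large $(V,H)$-rectangles ensures that each $P\in\cal C_{c,\ell_k}$ shares a proper edge with some $P'\in\cal C_{c,\ell_k}$. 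Since that edge lies on a single line, the codings $m_{\ell_k}(P)$ and $m_{\ell_k}(P')$ differ in exactly one component, and 3c) holds.

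The main obstacle is precisely this final step of passing from ``many large cells'' to ``many large cells that each share a proper edge with another large cell'': a priori, large cells could be isolated among small ones. The clustering argument inside large $(V,H)$-rectangles resolves this, but it requires Diophantine control of two of the three line families in $\cal P_{\ell_k}$, which is exactly why the hypothesis ``$\alpha_1$, $\alpha_2$ or $\alpha_1-\alpha_2\in\mathrm{Bad}$'', combined with a well-chosen subsequence $(\ell_k)$ of denominators of a second one, suffices. Once the three conditions of Proposition \ref{eqfunct} are verified, the functional equation has no measurable solution for any $\k\neq\0$, and the ergodicity of $\tilde T_{\alpha,\Phi_\a}$ follows.
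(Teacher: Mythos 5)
Your overall strategy is the paper's: reduce to the functional equation via the ergodicity lemma, apply Proposition \ref{eqfunct} to $\varphi=e^{2\pi i\langle \k,\a\rangle 1_{\Delta_0}}$ with the arrangement of the $3\ell$ lines $\cal V_\ell\cup\cal H_\ell\cup\cal D_\ell$, take $\ell_k$ equal to the denominators of the ``other'' coordinate, and use the fact that two cells sharing an edge on a single line $T_\alpha^{-j}(\partial\Delta_0)$ have codings differing in exactly the $j$-th component. (A harmless slip: condition 2) counts all $P'$ with $\overline P\cap\overline{P'}\neq\emptyset$, including cells meeting $P$ only at a vertex, so $\rho=6$ is an undercount; the paper takes $\rho=36$. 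Any finite $\rho$ works.)

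The genuine gap is exactly at the step you flag as the main obstacle, and your proposed repair does not close it. You build $\cal C_{c,\ell_k}$ from pairs of sub-cells adjacent across \emph{diagonal} edges inside large $(V,H)$-rectangles. But (i) a rectangle need not be crossed by any diagonal at all, in which case it contributes no pair; in the extreme case where the $\ell_k$ diagonals cluster (nothing prevents this, since in the case $\alpha_1\in\mathrm{Bad}$ no Diophantine condition is imposed on $\alpha_1-\alpha_2$, and $\|(j-j')(\alpha_1-\alpha_2)\|$ can be tiny), all diagonals pass through only $O(\ell_k)$ of the $\ell_k^2$ rectangles and almost every rectangle is a single undivided cell with no diagonal edge; (ii) the number of diagonal cuts across a given rectangle is \emph{not} bounded for the same reason, so the sub-cells need not be ``of comparable measure''; (iii) even one diagonal cut may clip a corner, leaving one sub-cell of negligible measure, so the pair cannot satisfy 3a) and dropping the small member leaves the large one unpartnered. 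The paper's device avoids all of this: it defines $\cal C_{c,\ell_k}$ as the cells possessing a \emph{vertical} edge of length $\geq \tfrac1{10\ell_k}$. Such an edge is a single arrangement edge shared with exactly one other cell, which then has the very same long vertical edge and hence lies in $\cal C_{c,\ell_k}$ automatically --- membership is defined by a property of the shared edge, so 3c) is built in. The counting for 3a)--3b) then comes not from measure averaging but from a Markov count: since $\Card(\cal P_\ell)\leq 3\ell^2$ and $\Card(\cal R_\ell)=\ell^2$, at least half the rectangles contain at most $5$ cells, and in each such rectangle the vertical side of length $\geq\tfrac1{2\ell_k}$ is split into at most $5$ segments, one of which has length $\geq\tfrac1{10\ell_k}$; the corresponding cell also has measure $\geq c_2/\ell_k^2$ because the horizontal side is $\geq c_1/\ell_k$ by $\alpha_1\in\mathrm{Bad}$. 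Finally, for the case $\alpha_1-\alpha_2\in\mathrm{Bad}$ the paper conjugates by the shear $(x_1,x_2)\mapsto(x_1,x_2-x_1)$ rather than reworking the argument in the diagonal direction; your direct variant is plausible but would need the same edge-based definition of $\cal C_{c,\ell_k}$ to go through.
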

\proof There are three cases, which can be reduced to the case $\alpha_1\in{\rm Bad}$ as follows:
 
If $\alpha_2\in{\rm Bad}$, simply invert the roles of the first and the second component in the proof. 

\goodbreak
If $\alpha_2-\alpha_1\in{\rm Bad}$, we use that $\tilde T_{\alpha, \Phi_{\a}}$ is conjugate to 
$$\tilde T_{\beta,\Psi_{\a}}:(x_1, x_2, y)\in\T^2\times\T^d\fff (x_1+\beta_1, x_2+\beta_2, y+ 1_{\Delta_1} \, \a \, \modu),$$
where $\beta=(\beta_1,\beta_2)=(\alpha_1,\alpha_2-\alpha_1)$, $\Delta_1=\{(x_1, x_2)\in [0,1]^2: x_1+x_2\leq 1\}$
and $\Psi_{\a}= 1_{\Delta_1} \, \a$. 

Indeed, with $S:(x_1, x_2, y)\in\T^2\times \T^1\fff (x_1, x_2-x_1, y)\in\T^2\times\T^d$, if $(x_1, x_2)$ is not in the boundary of $\Delta_1$, we have
\begin{eqnarray*}
&&(S\circ T_{\alpha,s}\circ S^{-1})(x_1, x_2, y) = S(x_1+\alpha_1, x_2+x_1+\alpha_2, y + 1_{\Delta_0}(x_1,\{x_2+x_1\}) \, \a \, \modu)\\
&&=S(x_1+\alpha_1, x_2+x_1+\alpha_2, y+1_{\Delta_1}(x,y) \, \a \, \modu) \\
&&=(x_1+\alpha_1, x_2+\alpha_2-\alpha_1, y+1_{\Delta_1}(x,y) \, \a \, \modu) = \tilde T_{\beta,\Psi_{\a}}(x_1, x_2, y).
\end{eqnarray*}
Then we can prove the ergodicity of $\tilde T_{\beta,\Psi_{\a}}$ like that of $\tilde T_{\alpha,\Phi_{\a}}$.

We suppose now that $\alpha_1$ is in ${\rm Bad}$ and we use Proposition \ref{eqfunct}
with $X=\T^2$, $T=T_{\alpha}$, $G$ the group of complex numbers of modulus 1, $\cal Q=\{\Delta_0,\T^2\setminus \Delta_0\}$ 
and  $\varphi = \varphi_{\k, \a} =  \exp(2\pi i \langle \k, \a 1_{\Delta_0} \rangle)$ to conclude that the functional equation 
\begin{eqnarray}
H(Tx) = e^{2\pi i  \langle \k, 1_{\Delta_0}(x) \a, \rangle} \, H(x) \label{equa0}
\end{eqnarray}
has no measurable solution $H: X \to \T^d$ of modulus 1 for $\k \in \Z^d \setminus \{\0\}$. 

Observe that (with the notation of the proposition) $\delta_0(\varphi_{\k, \a})$ is $> 0$ since $\a$ is totally irrational.
It remains to check the hypotheses of Proposition \ref{eqfunct}.

As $\alpha_1$ is in Bad, by Lemma \ref{lem:bad} there exists a constant $c_1 >0$ such that for all $n\geq 1$ the lengths of the $n$ intervals of 
$\T^1\setminus\{0,\alpha_1,\dots,(n-1)\alpha_1\}$ are $\geq \frac{c_1}{n}$.

Let $V_0=\{0\}\times \T^1$, $H_0=\T^1\times \{0\}$ and $D_0=\{(x,x)\mod \Z^2:x\in[0,1[\}$. The boundary of  $\Delta_0$ is $V_0\cup H_0\cup D_0$.
For each $\ell>0$ consider the three sets of lines in $\T^2$,
$$\cal V_{\ell}=\{T_{\alpha}^{-k}(V_0):0\leq k<\ell\}, \, \cal H_{\ell}=\{T_{\alpha}^{-k}(H_0):0\leq k<\ell\}, \, \cal D_{\ell}=\{T_{\alpha}^{-k}(D_0):0\leq k<\ell\}.$$

Let $\cal P_\ell$ (resp. $\cal R_\ell$) be the set of connected components of $\T^2\setminus (\cup_{L\in \cal V_{\ell}\cup \cal H_{\ell}\cup\cal D_{\ell}} L)
=\T^2\setminus \cup_{0\leq i<\ell}T^{-i}(\partial \Delta_0)$ (resp. of $\T^2\setminus (\cup_{L\in \cal V_{\ell}\cup \cal H_{\ell}} L)$). (See figure 1)

We will show that $\cal P_\ell$ satisfies the assumption of the previous proposition. 
Observe first, that two points $x$ and $y$ in a same $P\in\Cal P_{\ell}$ have the same $(\Cal Q,\ell)$ coding because the translates $T_{\alpha}^k([x,y])$, 
$0\leq k<\ell$, of the segment $[x,y]$ never cross a boundary of $\Delta_0$. Next, since $\alpha$ is totally irrational, we have 
$\lim_{\ell\fff\infty} \max_{P \in {\cal P}_{\ell}} {\rm diam}(P) = 0$, hence 1) holds. 

Let $P \in \cal P_{\ell}$. It is an open convex polygon with at most $6$ edges and there are at most three lines through each vertex of $P$. 
It follows that there exist at most $6+5\times 6$ polygons $P' \in \cal P_\ell$ such that $\overline P \cap\overline {P'} \neq\emptyset$, hence 2) holds with $\rho=36$.  

It remains to find the subsequence $(\ell_k)$ and to prove that 3a), 3b) and 3c) hold.

We take $\ell_k=q_k$, where $(q_k)_{k \geq 1}$ is the sequence of denominators of $\alpha_2$.
Observe that for each $R\in\cal R_{\ell_k}$, the length of the vertical edge is $\geq \frac1{2\ell_k}$, while the length of the horizontal edge  is $\geq \frac{c_1}{\ell_k}$.
 
Let ${\cal C}_{c,\ell_k}$ be the family of polygons $\displaystyle P \in \cal P_{\ell_k}$ with a vertical edge of length $\displaystyle \geq \frac{1}{10\ell_k}$. 
 
Since the lengths of the horizontal edges of the rectangle in $\cal R_{\ell_k}$ are $\geq \frac{c_1}{\ell_k}$, the measure of any $P\in\cal P_{\ell_k}$ 
with a vertical edge of length $\displaystyle \geq \frac{1}{10\ell_k}$ is at least $\displaystyle \frac{c_2}{\ell_k^2}$ for some positive constant $c_2$ not depending on $\ell_k$.
 
One can show by induction that $\Card (\cal P_{\ell})=3\ell^2-\ell$ (actually the bounds  $c\ell^2\leq \Card (\cal P_{\ell})\leq C\ell^2$ are sufficient for the proof).  
Therefore $\displaystyle \mu(P)\geq \frac{c_2}{3\Card ( \cal P_{\ell_k})}, \forall P\in{\cal C}_{c,\ell_k}$; hence 3a).
 
For 3b), let ${\cal R}_{5,\ell_k}$ be the family of rectangles $R \in \cal R_{\ell_k}$ that contain at most $5$ elements $P\in\cal P_{\ell_k}$. 
In each of these rectangles there exists $P\in\cal P_{\ell_k}$ with a vertical edge of length $\geq \frac{1}{10\ell_k}$. 
Therefore $\Card({\cal C}_{c,\ell_k})\geq \Card({\cal R}_{5,\ell_k})$. 
Now $\Card({\cal R}_{5,\ell_k})\geq \ell_k^2/2$, because $\Card(\cal R_{\ell})=\ell^2$ and $\Card(\cal P_{\ell})\leq 3\ell^2$. It follows that 
$\displaystyle \Card ({\cal C}_{c,\ell_k})\geq \frac{1}{6} \Card(\cal P_{\ell_k})$; hence 3b).
 
At last, let $P_0 \in {\cal C}_{c,\ell_k}$. It has a vertical edge $e$ of length $\geq \frac{1}{10\ell_k}$. This edge is shared with another $P_1 \in \cal P_{\ell_k}$. 
By definition of ${\cal C}_{c,\ell_k}$, we  have $P_1 \in {\cal C}_{c,\ell_k}$. Also this edge is included in $T_{\alpha}^{-j}(\partial \Delta_0)$ for some $j\in\{0,\dots,\ell_k-1\}$. 
It follows that the arc-wise connected set $P_0 \cup e \cup P_1$ is included in $ \T^2\setminus \bigcup_{0\leq i<\ell_k,\,i\neq j}T_{\alpha}^{-i}(\partial\Delta_0)$. 
Therefore, for all $x\in P_0$, all $y\in P_1$ and all $i \neq j$ we have $T_{\alpha}^i(x)$ and $T_{\alpha}^i(y)$ both in $\Delta_0$ or both not in $\Delta_0$, 
while one exactly of the points $T_{\alpha}^j(x)$ and $T_{\alpha}^j(y)$ is in $\Delta_0$; hence 3c).
\eop

{\it Remarks}: 1) Under the assumptions of Theorem \ref{ergoCpt}, the cocycle with values in $\R^d$ generated by $(1_{\Delta_0} - \frac12) \, \a$ is not a $T_\alpha$-coboundary. 

2) Using irreducible representations, Proposition \ref{eqfunct} provides a method for the extension of Theorem \ref{ergoCpt} to skew-products by topological compact groups
instead of the torus $\T^d$.

\section{\bf Appendix}

\subsection{Badly approximable numbers and W. M. Schmidt's games} \label{Badly}

\

In this section, we explain how results of W. M. Schmidt \cite{WSch66} combined with those of J. Tseng \cite{Tseng09} or M. Einsiedler and J. Tseng \cite{EiTs11}
give an information about ``badly approximable'' numbers. 

Notice that the terminology and the notation used in this section is that of the ``Schmidt's games'': 
$\alpha$ is a number in $]0,1[$ and $\theta$ is an irrational number.

\begin{prop} \label{prop:badApprox1} Let $\theta\in\R$ be an irrational number. For $n\geq 1$, let $\mathcal B_n\subset \R^n$ be the set
$\{\beta=(\beta_1,\dots,\beta_n)\in\R^n$ such that $\beta_i \text{ and }\beta_j-\beta_i\in\operatorname{Bad_{\Z}}(\theta), \text{ for all } 1\leq i<j\leq n\}$.
Then $\dim_H\mathcal B_n=n$.
\end{prop}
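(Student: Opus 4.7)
The plan is to prove that $\mathcal{B}_n$ is an $\alpha$-winning set for Schmidt's game played on $\R^n$, in the sense of \cite{WSch66}; by Schmidt's theorem this forces $\dim_H \mathcal{B}_n = n$ (the upper bound being trivial). The key external input is the result of Tseng \cite{Tseng09}, extended by Einsiedler--Tseng \cite{EiTs11}, that $\operatorname{Bad}_\Z(\theta)$ is $\alpha$-winning in $\R$ for some $\alpha \in (0,\tfrac12)$ whenever $\theta$ is irrational.

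Granting this input, I would combine it with three standard properties of $\alpha$-winning sets: (i) they have full Hausdorff dimension in their ambient Euclidean space; (ii) countable intersections of $\alpha$-winning sets are $\alpha$-winning; and (iii) for any surjective linear form $L : \R^n \to \R$ and any $\alpha$-winning $S \subset \R$, the cylinder $L^{-1}(S) \subset \R^n$ is $\alpha$-winning. The set $\mathcal{B}_n$ may be written as the finite intersection
\[
\mathcal{B}_n = \bigcap_{i=1}^{n} \pi_i^{-1}(\operatorname{Bad}_\Z(\theta)) \,\cap\, \bigcap_{1 \le i < j \le n} L_{ij}^{-1}(\operatorname{Bad}_\Z(\theta)),
\]
with $\pi_i(\beta) = \beta_i$ and $L_{ij}(\beta) = \beta_j - \beta_i$ surjective linear forms on $\R^n$. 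By (iii) each factor is $\alpha$-winning in $\R^n$, by (ii) so is their intersection $\mathcal{B}_n$, and by (i) $\dim_H \mathcal{B}_n = n$.

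The only step requiring genuine care is (iii). Alice's strategy in the $n$-dimensional game is to follow, on the $L$-projections of Bob's balls, her winning one-dimensional strategy for $S$: given Bob's ball $B_k \subset \R^n$ of radius $r_k$, its image $L(B_k)$ is an interval of radius $\|L\|\,r_k$; Alice runs her 1D strategy on this interval to obtain a sub-interval $I_k$ of radius $\alpha\|L\|r_k$, then lifts it to an $n$-dimensional sub-ball $A_k \subset B_k$ of radius $\alpha r_k$ whose $L$-image is exactly $I_k$ (possible because $L$ is surjective with codimension-one fibres, so the centre of $A_k$ can be placed on the required hyperplane inside $B_k$). Any legal response $B_{k+1} \subset A_k$ by Bob then satisfies $L(B_{k+1}) \subset I_k$, so the projections constitute a legal instance of the 1D $(\alpha,\beta)$-game won by Alice, forcing the common point of $\bigcap_k B_k$ to lie in $L^{-1}(S)$. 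With (iii) in hand, assembling (i)--(iii) as above closes the proof. The main obstacle is thus neither a new Diophantine inequality nor a delicate fractal construction, but rather ensuring that the game-theoretic lifting in (iii) can be carried out uniformly in $\beta$, which is where the surjectivity of $L$ and the codimension-one nature of its kernel are essential.
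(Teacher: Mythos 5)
Your proposal is correct, but it takes a genuinely different route from the paper. The paper argues by induction on $n$: for a fixed $(\beta_1,\dots,\beta_n)\in\mathcal B_n$ it considers the fibre $E_\beta=\operatorname{Bad_{\Z}}(\theta)\cap\bigcap_{i}(\operatorname{Bad_{\Z}}(\theta)+\beta_i)$ of admissible values of $\beta_{n+1}$, observes that this is a finite intersection of translates of a $\tfrac18$-winning subset of $\R$ (so only the one-dimensional game is ever played), hence has Hausdorff dimension $1$, and then invokes Falconer's slicing lower bound (Corollary 7.12 of \cite{Fa97}) to get $\dim_H\mathcal B_{n+1}\geq\dim_H\mathcal B_n+1$. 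You instead prove the stronger statement that $\mathcal B_n$ is itself $\alpha$-winning in $\R^n$, by writing it as a finite intersection of preimages $L^{-1}(\operatorname{Bad_{\Z}}(\theta))$ under surjective linear forms and establishing the lifting lemma (iii) via projection of the $n$-dimensional game onto the one-dimensional one. Your sketch of (iii) is sound: the centre of Alice's one-dimensional choice $I_k$ lies within $(1-\alpha)\|L\|r_k$ of $L(c_k)$, so its minimal-norm preimage under $L$ moves the centre of $B_k$ by at most $(1-\alpha)r_k$, which is exactly what is needed for $A_k\subset B_k$ with $L(A_k)=I_k$; the induced sequence of projected balls is then a legal play of the $(\alpha,\beta)$-game in $\R$ against Alice's winning strategy. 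The trade-off is clear: the paper's route uses only off-the-shelf one-dimensional facts plus a standard fractal-geometry lemma, while yours requires proving the (folklore, but not literally cited) preimage lemma and in exchange delivers more --- $\mathcal B_n$ winning in $\R^n$ is stable under further countable intersections and is dense, not merely full-dimensional. Note that the paper itself uses a cylinder-is-winning step of the same flavour in Proposition~\ref{prop:badApprox2}, so your lemma (iii) is very much in the spirit of the toolkit already in play.
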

\proof We use the following results of W. M. Schmidt \cite{WSch66} about $\alpha$-winning subsets in $\R$:
\hb{\it i) If $X\subset\R$ is $\alpha$-winning for some $\alpha\in]0,1[$, then $\dim_H X=1$.
\hb ii) A bi-Lipschitz image of an $\alpha$-winning subset is $\alpha$-winning.
\hb iii) Any finite or countable intersection of $\alpha$-winning subsets is $\alpha$-winning. }

We proceed by induction to prove the proposition. We have $\mathcal B_1=\operatorname{Bad_{\Z}}(\theta)$ 
which is $\tfrac18$-winning for any irrational number $\theta$, by a result of J. Tseng \cite{Tseng09}; hence $\dim_H\mathcal B_1=1$.  

Then suppose that $\mathcal B_n$ has Hausdorff dimension $n$. Let $\beta=(\beta_1,\dots,\beta_n)\in\mathcal B_n$. Consider the set 
$E_{\beta}=\operatorname{Bad_{\Z}}(\theta)\cap(\operatorname{Bad_{\Z}}(\theta)+\beta_1)\dots\cap(\operatorname{Bad_{\Z}}(\theta)+\beta_n).$

If $\beta_{n+1}\in E_{\beta}$, then $\beta_{n+1}\in \operatorname{Bad_{\Z}}(\theta)$ and for all $1\leq i\leq n$, $\beta_{n+1}-\beta_i\in\operatorname{Bad_{\Z}}(\theta)$, so that $(\beta_1,\dots,\beta_n,\beta_{n+1})\in \mathcal B_{n+1}$. 

By ii) and iii), $E_{\beta}$ is $\tfrac18$-winning, which in turn implies that $\dim_H E_{\beta}=1$.

By Corollary 7.12 in \cite{Fa97}, it follows that $\dim_H\mathcal B_{n+1}=\dim_H\mathcal B_n+1=n+1$. \eop

{\it Remark}: If $\theta \in \rm{Bad}$, since $0\in \operatorname{Bad}_{\Z}(\theta)$, the same conclusion holds when the condition $1\leq i<j\leq n$ 
above is replaced by $1\leq i, j\leq n$.

\begin{proposition} \label{prop:badApprox2} Let $\beta_1,\dots,\beta_r\in\R$.
The set of $\theta=(\theta_1,\theta_2)\in\R^2$ such that
\hb a) $1, \theta_1, \theta_2$ are linearly independent over $\Q$, 
\hb b) $\theta_1$ has bounded partial quotients (i.e., $\theta_1\in \rm{Bad}$),
\hb c) The differences $\beta_j - \beta_{j'}, \, j ,j' \in\{ 1, \dots, r\}$,  are in $\operatorname{Bad}_{\Z}(\theta_1)$,
\hb is winning and therefore has Hausdorff dimension 2.
\end{proposition}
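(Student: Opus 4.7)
\proof The plan is to strengthen the statement to: the target set $W$ is $\alpha$-winning for Schmidt's game on $\R^2$, for some absolute $\alpha \in (0, \tfrac12)$. Schmidt's theorem that any $\alpha$-winning subset of $\R^d$ has Hausdorff dimension $d$ then gives the claim. Toward this, I would write $W = W_a \cap W_b \cap \bigcap_{j \neq j'} W_{j,j'}$ according to the three conditions (a), (b), (c), and exhibit a common value of $\alpha$ for which each factor is $\alpha$-winning; the countable intersection property of $\alpha$-winning sets (quoted from Schmidt in the excerpt) then finishes the job.

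For (a), $W_a$ is the complement in $\R^2$ of the countable family of affine lines $L_{p,q,r} = \{\theta \in \R^2 : p + q\theta_1 + r\theta_2 = 0\}$, indexed by $(p,q,r) \in \Z^3 \setminus \{0\}$. The complement of any single affine line in $\R^2$ is $\alpha$-winning for every $\alpha \in (0, \tfrac12)$ (Alice always has room to place her ball off the line), so by countable intersection $W_a$ is $\alpha$-winning.

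For (b) and (c), both conditions involve only $\theta_1$, so I would apply the standard lifting principle: if $E \subset \R$ is $\alpha$-winning on $\R$, then $E \times \R \subset \R^2$ is $\alpha$-winning on $\R^2$. Given a ball $B((b_1,b_2), r)$ of Bob in $\R^2$, Alice projects to the first coordinate, applies her $\R$-strategy to produce $B(b_1', \alpha r) \subset B(b_1, r)$, and responds in $\R^2$ with $B((b_1', b_2), \alpha r)$; since $|b_1' - b_1| \leq (1-\alpha) r$, this ball sits inside Bob's. Condition (b) then reduces to Schmidt's classical theorem that $\operatorname{Bad} \subset \R$ is $\alpha$-winning for every $\alpha \in (0,\tfrac12)$. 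For condition (c), fixing $x := \beta_j - \beta_{j'}$ for each pair $(j,j')$, I need the fact that $\{\theta_1 \in \R : x \in \operatorname{Bad}_\Z(\theta_1)\} = \{\theta_1 : \inf_{q \neq 0} |q| \, \|q\theta_1 - x\| > 0\}$ is $\alpha_0$-winning on $\R$ for an absolute $\alpha_0$; this is the statement dual to Tseng's theorem, contained in the Einsiedler--Tseng paper \cite{EiTs11} on badly approximable systems of affine forms. Setting $\alpha = \min(\alpha_0, \tfrac14)$ then makes all factors simultaneously $\alpha$-winning, so their countable intersection $W$ is $\alpha$-winning, hence of Hausdorff dimension $2$.

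The main delicate point is the invocation of Einsiedler--Tseng for condition (c): the relevant statement is the ``vary $\theta_1$, fix $x$'' version of the winning property, which is technically different from Tseng's ``fix $\theta$, vary $x$'' result used in Proposition \ref{prop:badApprox1}. Once this is acknowledged, the remainder is a routine combination of Schmidt's games, the product lifting from $\R$ to $\R^2$, and the treatment of (a) as a countable intersection of line-complements.
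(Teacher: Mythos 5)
Your proof is correct and follows essentially the same route as the paper: the key input is the Einsiedler--Tseng theorem that $\{\theta_1 : x\in\operatorname{Bad}_{\Z}(\theta_1)\}$ is $\alpha$-winning with $\alpha$ independent of $x$, combined with Schmidt's countable-intersection property, the lift of a winning set $E\subset\R$ to $E\times\R\subset\R^2$, and the observation that conditions a) and b) also define winning sets. You merely spell out details the paper leaves implicit (the explicit lifting strategy, the line-complement argument for a), and the choice of a common winning parameter $\alpha$).
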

\begin{proof} By a result of M. Einsiedler and J. Tseng (\cite[theorem 1.1]{EiTs11}), given $\beta\in\R$, the set of $\theta_1\in\R$ 
such that $\beta\in\operatorname{Bad}_{\Z}(\theta_1)$ is $\alpha$-winning, for some winning parameter $\alpha >0$ independent of $\beta$. 

By iii), it follows that the set 
$$E(\beta_1,\dots,\beta_{r})=\{\theta_1\in\R: \beta_j-\beta_{j'}\in\operatorname{Bad}_{\Z}(\theta_1), j,j'\in\{1,\dots,r_1\}\}$$  
is winning which implies  that $E(\beta_1,\dots,\beta_{r_1})\times \R$ is winning as a subset of $\R^2$. 
Since the sets of $(\theta_1,\theta_2)$ such that a) and b) hold are winning, we are done.  
\end{proof}	

\begin{cor}\label{cor:badApprox2} Let $\beta^1_1,\dots\beta^1_{r_1}, \beta^2_1,\dots\beta^2_{r_2}\in\R$. 
The set of $\theta = (\theta_1,\theta_2)\in\R^2$ such that
\hb a) $1, \theta_1, \theta_2$ are linearly independent over $\Q$, 
\hb b) $\theta_i,\theta_2$ have bounded partial quotients (i.e., $\theta_i\in \rm{Bad}$),
\hb c) the differences $\beta^i_j - \beta^i_{j'}, \, j ,j' \in\{ 1, \dots, r_i\}$, are in $\operatorname{Bad}_{\Z}(\theta_i)$, $i=1,2$,
\hb is winning and therefore has Hausdorff dimension $2$.
\end{cor}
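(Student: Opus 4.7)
\textit{Proof plan for Corollary \ref{cor:badApprox2}.} The plan is to mimic the argument of Proposition \ref{prop:badApprox2}, but to apply it independently in each coordinate of $\theta=(\theta_1,\theta_2)\in\R^2$, and then to combine the resulting winning sets using the product and intersection stability of Schmidt's winning property. Throughout I use the three properties i)--iii) of $\alpha$-winning sets recalled in the proof of Proposition \ref{prop:badApprox1}, together with Tseng's result that $\operatorname{Bad}_{\Z}(\theta)$ (in particular $\operatorname{Bad}\ni 0$) is $\tfrac18$-winning in the $\theta$-variable, and the Einsiedler--Tseng result that for any fixed $\beta\in\R$ the set $\{\theta\in\R:\beta\in\operatorname{Bad}_{\Z}(\theta)\}$ is winning with a parameter independent of $\beta$.

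First, for each $i\in\{1,2\}$ separately, consider the set
\[
E_i=\bigl\{\theta_i\in\R:\theta_i\in\operatorname{Bad},\ \beta^i_j-\beta^i_{j'}\in\operatorname{Bad}_{\Z}(\theta_i),\ \forall\,j,j'\in\{1,\dots,r_i\}\bigr\}.
\]
By the Einsiedler--Tseng theorem, each of the finitely many conditions $\beta^i_j-\beta^i_{j'}\in\operatorname{Bad}_{\Z}(\theta_i)$ defines a winning subset of $\R$, with the \emph{same} winning parameter. The condition $\theta_i\in\operatorname{Bad}$ is itself winning (Schmidt). By property iii), a finite intersection of winning sets with a common parameter is winning, so $E_i$ is winning in $\R$; this is exactly what Proposition \ref{prop:badApprox2} established, just stated symmetrically in the two indices.

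Next, I pass from $\R$ to $\R^2$. The product $E_1\times E_2$ is winning in $\R^2$: this follows from the fact that a Cartesian product of winning sets (with compatible parameter) is winning for the product game on $\R^2$ played with the sup-metric, which is exactly how Schmidt's games extend to higher dimensions. (Alternatively one may view the conditions on $\theta_1$ and on $\theta_2$ as winning subsets of $\R^2$ via the bi-Lipschitz projections, using ii), and then intersect via iii).) Finally, the set of $\theta=(\theta_1,\theta_2)$ such that $1,\theta_1,\theta_2$ are $\Q$-linearly independent is the complement of a countable union of affine hyperplanes in $\R^2$, hence is winning (its complement is a countable union of sets of dimension $1$, and winning is preserved under countable intersections with winning subsets). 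Intersecting this with $E_1\times E_2$, still using iii), produces the set described in a), b), c), which is therefore winning in $\R^2$, and by Schmidt's theorem i) has Hausdorff dimension $2$.

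The only step requiring any care is verifying that the Einsiedler--Tseng winning parameter is indeed independent of the shift $\beta$, which is what allows the finite intersection over $j,j'$ to remain winning; this is exactly the content we are invoking from \cite{EiTs11}, so there is no real obstacle once that input is in place. Everything else is bookkeeping via properties i)--iii).
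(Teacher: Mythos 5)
Your proposal is correct and follows essentially the same route the paper intends: apply the Einsiedler--Tseng and Schmidt winning-set machinery coordinatewise (exactly as in Proposition \ref{prop:badApprox2}), lift each one-dimensional winning set to $\R^2$, and intersect finitely many $\alpha$-winning sets together with conditions a) and b). One small caveat: your justification that condition a) defines a winning set is not quite right as stated --- having a complement that is a countable union of dimension-one sets does not by itself imply winning; the correct (and standard) reason is that the complement of a countable union of affine hyperplanes is $\alpha$-winning via the usual dodging argument, one hyperplane per turn, which is also what the paper tacitly invokes when it asserts that the sets defined by a) and b) are winning.
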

	
\subsection{A version of the Lebesgue density theorem}

\

In this section we recall a version of the Lebesgue density theorem used in Section \ref{ergo}.

Let $(X, d)$ be a locally compact metric space equipped with a positive measure $\mu$ on the $\sigma$-algebra of its Borelian sets.

For every $n \geq 1$, let ${\cal U}_n$ be a covering (up to a set of $\mu$-measure 0) of $X$ by measurable sets of positive measure. 
We denote by $U_n(x)$ an element of ${\cal U}_n$ containing $x \in X$. Assume that the following conditions are satisfied:
 
There is a constant $C$ such that,
\begin{eqnarray}
\forall n\geq 1, \forall \, U \in \mathcal U_n, \,  \mu \bigl( \bigcup_{k\geq n}\,\bigcup_{ V \in \mathcal U_k: \, \mu(U\cap V) > 0} V \bigr) \leq C \mu(U)\label{condi1bis}
\end{eqnarray}
\begin{eqnarray} 
\lim_n {\diam}U_n(x) = 0, \forall x \in X. \label{condi2}
\end{eqnarray}
For a non negative integrable function $f$ on $(X, \mu)$, we set
$$M(f)(x) = \sup_{n \geq 1} {1\over \mu(U_n(x))} \int_{U_n(x)} f d\mu.$$
We will use the following ``Vitali covering lemma'':
\begin{lem}
Suppose that all the coverings $\mathcal U_n$ are finite and  that (\ref{condi1bis}) holds. Let $\mathcal V\subset \cup_{n\geq 1}\mathcal U_n$. 
Then there exists $\mathcal W\subset \mathcal V$ such that 
\begin{enumerate}
\item[i)] for all $U, U'\in \mathcal W$, $U\ne U'\implies \mu(U\cap U')=0$,
\item[ii)] $\mu(\cup_{U\in \mathcal V}U)\leq C\mu(\cup_{U\in \mathcal W}U)$. 
\end{enumerate}	
\end{lem}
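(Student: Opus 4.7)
The plan is to construct $\mathcal{W}$ greedily, level by level, exploiting the hierarchical structure of the families $(\mathcal{U}_n)$. Setting $\mathcal{W}_0=\emptyset$, I would inductively define $\mathcal{W}_n=\mathcal{W}_{n-1}\cup \mathcal{F}_n$, where $\mathcal{F}_n$ is a maximal pairwise almost disjoint subfamily of
\[
\{\,U\in\mathcal{V}\cap\mathcal{U}_n:\mu(U\cap U')=0\ \text{for every}\ U'\in\mathcal{W}_{n-1}\,\}.
\]
Since $\mathcal{U}_n$ is finite, such an $\mathcal{F}_n$ is produced by a finite greedy procedure. Taking $\mathcal{W}=\bigcup_n\mathcal{W}_n$, property (i) holds by construction, and each $U\in\mathcal{W}$ has a well-defined \emph{level} $m(U)$, namely the unique $n$ with $U\in\mathcal{F}_n$.

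The core of the argument is the following maximality observation, which drives (ii): for every $V\in\mathcal{V}$, say $V\in\mathcal{U}_n$, there exists $U_V\in\mathcal{W}$ at some level $m\leq n$ with $\mu(U_V\cap V)>0$. Indeed, either $V\in\mathcal{W}$, in which case take $U_V=V$; or $V\notin\mathcal{W}_n$, and then either $V$ met some element of $\mathcal{W}_{n-1}$ in positive measure (otherwise it would have qualified as a candidate at step $n$), or the maximality of $\mathcal{F}_n$ forces $V$ to intersect some element of $\mathcal{F}_n$ in positive measure. In all subcases, an $U_V$ with $m(U_V)\leq n$ is produced. Consequently $V$ itself lies in
\[
S_{U_V}\,:=\bigcup_{k\geq m(U_V)}\ \bigcup_{\substack{V'\in\mathcal{U}_k\\ \mu(U_V\cap V')>0}} V',
\]
because $V$ appears in this union as the choice $k=n$, $V'=V$.

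Hypothesis (\ref{condi1bis}) gives $\mu(S_U)\leq C\mu(U)$ for every $U\in\mathcal{W}$, and the family $\mathcal{W}$ is pairwise almost disjoint, so
\[
\mu\bigl(\bigcup_{V\in\mathcal{V}} V\bigr)\leq \mu\bigl(\bigcup_{U\in\mathcal{W}} S_U\bigr)\leq \sum_{U\in\mathcal{W}}\mu(S_U)\leq C\sum_{U\in\mathcal{W}}\mu(U)=C\,\mu\bigl(\bigcup_{U\in\mathcal{W}} U\bigr),
\]
which is (ii). The only real point to check carefully is the maximality statement at each level: the subtlety is to be sure that every non-selected $V\in\mathcal{V}\cap\mathcal{U}_n$ intersects some already-selected element in positive measure, not merely in the trivial set-theoretic sense. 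This is handled by selecting $\mathcal{F}_n$ to be maximal for \emph{positive-measure} intersection, which is possible precisely because $\mathcal{U}_n$ is finite, reducing the construction to a standard finite greedy algorithm within each level.
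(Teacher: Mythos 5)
Your proof is correct and follows essentially the same route as the paper: a level-by-level greedy selection of a maximal pairwise almost-disjoint subfamily compatible with the previously chosen levels, followed by the observation that every $V\in\mathcal V\cap\mathcal U_n$ meets some selected $U$ of level $\leq n$ in positive measure, so that $\bigcup_{V\in\mathcal V}V$ is covered by the sets controlled by hypothesis (\ref{condi1bis}). The only detail worth making explicit is that $\mu(V)>0$ for every $V$ in the coverings (as assumed in the setup), which is what makes the case $U_V=V$ legitimate.
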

\proof Let us define inductively a sequence of subsets $\mathcal W_n\subset \mathcal V\cap \mathcal U_n$. Let $\mathcal W_1$ be a maximal subset of $\mathcal V\cap \mathcal U_1$ such that for all $U\ne U'\in\mathcal W_1$, $\mu(U\cap U')=0$. Let $\mathcal W_{n+1}$ be a  maximal subset of $\mathcal V\cap\mathcal U_{n+1}$ such that 
for all $U\in \mathcal W_{n+1}$, $\mu(U\cap U')=0$ whenever $U'\in\mathcal W_{n+1}$ and $U'\ne U$, or $U'\in \cup_{1\leq k\leq n}\mathcal W_k$. 
Let $\mathcal W=\cup_{n\geq 1} \mathcal W_n$. 

Clearly i) holds. Next, if $V\in\mathcal V\cap \mathcal U_n$, then $V$ is either  in $\mathcal W$ or  $V$ cannot be add to $\mathcal W_n$, 
so that there exists $U\in\cup_{k\leq n}\mathcal W_k$ such that $\mu(U\cap V)>0$. It follows that any $V\in\mathcal V\cap\mathcal U_n$ is included in
$$\bigcup_{1\leq k\leq n}\bigcup_{U\in\mathcal W_k}\bigcup_{m\geq k}\,\bigcup_{ W\in\mathcal U_m:\mu(W\cap U)>0)}W.$$
Therefore, by (\ref{condi1bis}),
$$\mu(\cup_{V\in\mathcal V}V)\leq \sum_{k\geq 1}\sum_{U\in \mathcal W_k}\mu(\bigcup_{m\geq k}\,\bigcup_{ W\in\mathcal U_m:\mu(W\cap U)>0)}W)
\leq \sum_{k\geq 1}\sum_{U\in \mathcal W_k}C\mu(U). $$ 
Thanks to i), we obtain ii).
\eop
\begin{lem} \label{maxim} Under Condition (\ref{condi1bis}), for all positive $\lambda > 0$ and all non negative integrable functions $f$,
\begin{eqnarray}
\mu \{M(f) > \lambda\} \leq C{\|f\|_1 \over \lambda}. \label{inegMax}
\end{eqnarray}
\end{lem}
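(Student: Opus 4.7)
The plan is to deduce this weak-$(1,1)$ estimate from the Vitali-type covering lemma just proved, following the classical Hardy--Littlewood argument.

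First, I would set $E_\lambda := \{x\in X : M(f)(x) > \lambda\}$. For each $x \in E_\lambda$, by the very definition of $M(f)$ one can choose an integer $n(x) \geq 1$ such that
$$\int_{U_{n(x)}(x)} f \, d\mu > \lambda \, \mu(U_{n(x)}(x)).$$
Collect these sets into $\mathcal V := \{U_{n(x)}(x) : x \in E_\lambda\}$, a sub-family of $\bigcup_{n \geq 1} \mathcal U_n$ that covers $E_\lambda$.

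Next, I would apply the preceding Vitali lemma to $\mathcal V$ to extract a sub-family $\mathcal W \subset \mathcal V$ whose members are pairwise $\mu$-disjoint and satisfy $\mu(\bigcup_{V \in \mathcal V} V) \leq C \, \mu(\bigcup_{U \in \mathcal W} U)$. Chaining this with the defining averaging inequality of the sets in $\mathcal V$ yields
$$\mu(E_\lambda) \leq \mu\Bigl(\bigcup_{V \in \mathcal V} V\Bigr) \leq C \sum_{U \in \mathcal W} \mu(U) \leq \frac{C}{\lambda} \sum_{U \in \mathcal W} \int_U f \, d\mu \leq \frac{C}{\lambda} \|f\|_1,$$
where the equality $\mu(\bigcup_{U \in \mathcal W} U) = \sum_{U \in \mathcal W}\mu(U)$ uses the essential disjointness of $\mathcal W$, and the final inequality uses essential disjointness together with non-negativity of $f$. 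This is exactly (\ref{inegMax}).

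The only point requiring care is that the Vitali lemma is stated under the hypothesis that each $\mathcal U_n$ is finite, which in particular forces $\mathcal V$ to be at most countable. In the applications targeted in Section \ref{ergo} (partitions of $\T^2$ into finitely many rectangles at each level) this holds automatically, so no further argument is needed. In a more general locally compact setting one would first approximate $E_\lambda$ from inside by a compact set via inner regularity of $\mu$, reduce to a finite subcover of that compact set, and only then invoke Vitali; this is a routine technical adjustment rather than a conceptual obstacle, and I do not foresee any genuine difficulty beyond it.
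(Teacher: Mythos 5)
Your proof is correct and follows essentially the same route as the paper: choose for each $x$ in the level set a set $U_{n(x)}(x)$ witnessing the maximal inequality, apply the Vitali-type covering lemma to extract an essentially disjoint subfamily $\mathcal W$, and chain the averaging inequality with property ii) to get the weak-$(1,1)$ bound. Your added remark about the finiteness hypothesis in the covering lemma is a reasonable point of care that the paper passes over silently, but it does not change the argument.
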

\proof For all $x \in A := \{M(f) > \lambda\}$, there is an integer $r(x) \geq 1$ such that
\begin{eqnarray}
\int_{U_{r(x)}(x)} f d\mu > \lambda \, \mu(U_{r(x)}(x)). \label{ineq1}
\end{eqnarray}
Let $\mathcal V=\{U_{r(x)}(x):x\in A\}$. By the previous lemma, there exists $\mathcal W \subset \mathcal V$ such that i) and ii) hold. Therefore,
$$\|f\|_1\geq \int_{\cup_{U\in\mathcal W}U}f\,d\mu\overset{\text{by i}}
{=}\sum_{U\in\mathcal W}\int_Uf\,d\mu>\lambda\sum_{U\in\mathcal W}\mu(U)\overset{\text{by ii}}
{\geq}\frac{\lambda}{C}\mu(\cup_{V\in\mathcal V}V)\geq \frac{\lambda}{C}\mu(A). \eop$$

\begin{thm} \label{densL} Under Conditions (\ref{condi1bis}) and (\ref{condi2}), for all $f$ in $L^1(\mu)$, we have
$$\lim_{n \to \infty} {1 \over \mu(U_n(x))} \int_{U_n(x)} f d\mu = f(x), \text{ for } \mu\text{-a.e. } x \in X.$$ 
In particular if $B$ is a measurable set of positive measure, for every $\varepsilon$, there are $n(\varepsilon)$ and $B_\varepsilon \subset B$ 
of measure $\geq \frac12 \mu(B)$ such that:
$$\mu(B \cap U_n(x)) \geq (1-\varepsilon) \mu(U_n(x)), \forall n \geq n(\varepsilon), \, \forall x \in B_\varepsilon.$$
\end{thm}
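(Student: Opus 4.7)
The plan is to derive the pointwise convergence from the maximal inequality (Lemma~\ref{maxim}) by the standard density argument, and then deduce the statement about sets via Egorov's theorem.

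First I would treat a dense subclass where the conclusion is immediate. Since $(X,d)$ is a locally compact metric space and $\mu$ is a Radon measure (as stipulated in the footnote of Section~\ref{recEss}), the space $C_c(X)$ of continuous compactly supported functions is dense in $L^1(\mu)$. For $g\in C_c(X)$ and $\varepsilon>0$, uniform continuity of $g$ combined with Condition~(\ref{condi2}) (vanishing diameters) implies that for each $x\in X$ and all sufficiently large $n$, $|g(y)-g(x)|<\varepsilon$ for every $y\in U_n(x)$. Hence $\Lambda_n g(x):=\mu(U_n(x))^{-1}\int_{U_n(x)}g\,d\mu\to g(x)$ pointwise (in fact uniformly on compact sets) for every $g\in C_c(X)$.

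Next, for general $f\in L^1(\mu)$ and $\lambda>0$, choose $g\in C_c(X)$ with $\|f-g\|_1<\lambda^2$. The triangle inequality gives
$$\limsup_n |\Lambda_n f(x)-f(x)|\leq M(f-g)(x)+\limsup_n|\Lambda_n g(x)-g(x)|+|f(x)-g(x)|,$$
where the middle term vanishes by the previous step. Therefore
$$\{x:\limsup_n|\Lambda_n f(x)-f(x)|>2\lambda\}\subset \{M(f-g)>\lambda\}\cup\{|f-g|>\lambda\},$$
and combining Lemma~\ref{maxim} with Markov's inequality bounds the measure of the right-hand side by $(C+1)\|f-g\|_1/\lambda<(C+1)\lambda$. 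Letting $\lambda$ run over a sequence decreasing to $0$, we conclude that $\limsup_n|\Lambda_n f(x)-f(x)|=0$ for $\mu$-a.e.\ $x$, which is the first assertion.

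For the consequence, apply the first part to $f=1_B$: for $\mu$-a.e.\ $x\in B$, $\mu(B\cap U_n(x))/\mu(U_n(x))\to 1$. Restricting to $B$ (of finite measure after intersecting with a large compact set if necessary) and applying Egorov's theorem produces a subset $B_\varepsilon\subset B$ with $\mu(B_\varepsilon)\geq\tfrac12\mu(B)$ and an integer $n(\varepsilon)$ such that for all $n\geq n(\varepsilon)$ and $x\in B_\varepsilon$, $\mu(B\cap U_n(x))\geq (1-\varepsilon)\mu(U_n(x))$, as required. The only nontrivial point is the density of $C_c(X)$ in $L^1(\mu)$, which rests on the Radon assumption on $\mu$; everything else is a routine reduction to the maximal inequality already established.
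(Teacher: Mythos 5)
Your proposal is correct and follows essentially the same route as the paper: approximate $f$ in $L^1$ by a continuous function, handle the approximant via Condition (\ref{condi2}), control the error term with the maximal inequality of Lemma \ref{maxim} plus Markov's inequality, and let the approximation error tend to zero. The only cosmetic differences are that the paper works with $f^*(x)=\limsup_n \mu(U_n(x))^{-1}\int_{U_n(x)}|f(y)-f(x)|\,d\mu(y)$ rather than $\limsup_n|\Lambda_nf(x)-f(x)|$ directly, and leaves the Egorov step for the last assertion implicit.
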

\proof It is enough to prove that, for all $f \in L^1(\mu)$, for almost all $x$,
$$f^*(x) := \limsup_n{1 \over \mu(U_n(x))} \int_{U_n(x)} |f(y) - f(x)| d\mu(y) =0.$$
For all $\varepsilon > 0$, there exists a continuous function $g_\varepsilon \in L^1(\mu)$ such that $\|f - g_\varepsilon \|_1 \leq \varepsilon$. 

By Condition (\ref{condi2}), for all $x$ there is an integer $N(x)$ such that, for each $n \geq N(x)$, the variation of $g_\varepsilon$ on $U_n(x)$ is less than $\varepsilon$.
Therefore, with $h_\varepsilon = f - g_\varepsilon$, we have for $n \geq N(x)$:
\begin{eqnarray*}
&&{1 \over \mu(U_n(x))} \int_{U_n(x)} |f(y) - f(x)| d\mu(y) \\
&&\leq {1 \over \mu(U_n(x))} \int_{U_n(x)} |h_\varepsilon(y) - h_\varepsilon(x)| d\mu(y) 
+{1 \over \mu(U_n(x))} \int_{U_n(x)} |g_\varepsilon(y) - g_\varepsilon(x)| d\mu(y) \\
&& \leq M(|h_\varepsilon|)(x) + |h_\varepsilon (x)| + \varepsilon.
\end{eqnarray*}
Hence, for all $x$ and $\varepsilon > 0$, $f^*(x) \leq M(|h_\varepsilon|)(x) +|h_\varepsilon (x)| + \varepsilon$.

For $\lambda > 0$, taking $\varepsilon$ such that $0 < \varepsilon < \lambda$, it follows by Lemma \ref{maxim} that
$$\mu(f^* > \lambda) \leq  \mu(M(|h_\varepsilon|) > {\lambda - \varepsilon \over 2}) + \mu(|h_\varepsilon| > {\lambda - \varepsilon \over 2})
\leq 2 (1 + C){\|h_\varepsilon\|_1 \over \lambda - \varepsilon} \leq \varepsilon {2 (1 + C) \over \lambda - \varepsilon} \to  0,$$ 
when $\varepsilon$ goes to zero. As $\lambda$ is arbitrary $> 0$, this implies $f^* = 0$ a.e. 

The last assertion follows by taking $f = 1_B$.\eop

\vskip 3mm

\end{document}